\newcommand{\fltns}{{\mathbin{\mkern-6mu\fatslash}}}
\theoremstyle{definition}
\newtheorem{thm}{Theorem}[section]
\newtheorem{lem}[thm]{Lemma}
\newtheorem{prop}[thm]{Proposition}
\newtheorem{cor}[thm]{Corollary}
\newtheorem{conj}[thm]{Conjecture}
\newtheorem{ex}[thm]{Example}
\newtheorem{defn}[thm]{Definition}
\newtheorem{definition}[thm]{Definition}
\newtheorem{rem}[thm]{Remark}
\newcommand{\RR}{\mathbf{R}}      
\newcommand{\ZZ}{\mathbf{Z}}      
\newcommand\CC{{\mathbf C}}
\newcommand{\Gm}{\mathbf{G}_m}
\newcommand{\R}{\mathbf{R}}
\newcommand{\C}{\mathbf{C}}
\newcommand{\g}{\mathfrak{g}}
\newcommand{\spec}{\mathrm{Spec}\, } 
\newcommand{\ad}{\mathrm{ad}}
\newcommand{\acts}{\curvearrowright}
\newcommand{\Ggr}{\mathbf{G}_{\mathrm{gr}}}
\newcommand{\Pic}{\mathrm{Pic}}
\renewcommand{\bar}[1]{\overline{#1}}
\renewcommand{\emptyset}{\varnothing}
\renewcommand{\epsilon}{\varepsilon}
\renewcommand{\tilde}[1]{\widetilde{#1}}
\DeclarePairedDelimiter{\set}{\{}{\}}
\newcommand{\calB}{\mathcal{B}}
\newcommand{\calE}{\mathcal{E}}
\newcommand{\calO}{\mathcal{O}}
\newcommand{\frakc}{\mathfrak{c}}
\newcommand{\frakg}{\mathfrak{g}}
\newcommand{\frakh}{\mathfrak{h}}
\newcommand{\frakm}{\mathfrak{m}}
\newcommand{\frakp}{\mathfrak{p}}
\newcommand{\Lie}{\mathrm{Lie}}
\newcommand{\PSL}{\mathrm{PSL}}
\newcommand{\U}{\mathrm{U}}
\DeclareMathOperator{\Bun}{Bun}
\thanks{E.C. was partially supported by the Swiss National Science Foundation No. 196960 and the JSPS Postdoctoral Fellowship during the completion of this project. E.H. was funded by the Deutsche 
Forschungsgemeinschaft (DFG, German Research
Foundation) – 541679129. M.Y. was supported by the World Premier International Research Center Initiative (WPI), MEXT, Japan. }
\begin{document}

\author{Eric Y. Chen}
\address{E. Y. Chen, \newline\indent \'Ecole Polytechnique F\'ed\'erale de Lausanne, 
\newline\indent CH-1015 Lausanne, Switzerland.}
\email{eric.chen@epfl.ch}

\author{Enya Hsiao}
\address{E. Hsiao, \newline \indent Max-Planck-Institute Mathematics in the Sciences \newline\indent 04103 Leipzig, Germany}
\email{enya.hsiao@mis.mpg.de}

\author{Mengxue Yang}
\address{M. Yang, \newline\indent Kavli IPMU (WPI), UTIAS, The University of Tokyo, 
\newline\indent Kashiwa, Chiba 277-8583, Japan.}
\email{mengxue.yang@ipmu.jp}

\title{(BAA)-branes from higher Teichmüller theory}
\begin{abstract}
Interpreting certain holomorphic Lagrangians that arise from the relative Langlands program, we construct moduli stacks underlying the generalized Slodowy categories of Collier--Sanders and $G^\RR$-Higgs bundles over a Riemann surface. Furthermore, we extend the Cayley correspondence of Bradlow--Collier--García-Prada--Gothen--Oliveira to a morphism of Lagrangians over the Hitchin moduli stack, and initiate the study of its hyperholomorphic mirror partner under $S$-duality. 
\end{abstract}
\maketitle

\setcounter{tocdepth}{1}
\tableofcontents


\section{Introduction}

\subsection{Boundary conditions and relative Langlands} \label{subsect intro to BC}

At a physical level of rigor, one can regard various mathematical structures organized under the umbrella term ``Langlands correspondence" as the study of a certain 4d $\mathcal{N} = 4$ quantum field theory (QFT) under $S$-duality \cite{Kapustin-Witten} \cite{Gaiotto-Witten}. This 4d QFT has various supersymmetric twists under which the theory becomes more topological and hence more amenable to study. 

Most relevant to the present article is the $A$-twist at the Dolbeault point, at which $S$-duality can be interpreted as a mirror symmetry between the Hitchin moduli stack of a reductive group $G$ and its Langlands dual group \cite{Donagi-Pantev} \cite{Hausel-Thaddeus}. For the purposes of this introductory section, we use $A_{\mathrm{Dol}}$ to denote this 4d TQFT.\footnote{For more details on this perspective, we refer the reader to the embedded Remarks \ref{rem GL nonsense 1}, \ref{rem AKSZ} and \ref{rem GL nonsense 2}; a parallel discussion for the $B$-twist can be found in \cite{CF}.}

One way to formalize the above discussion is through the language of functorial field theory: the data of $A_{\mathrm{Dol}}$ can theoretically be encapsulated in a 4-category, the so-called \textit{category of boundary conditions} $\mathfrak{B}_{\mathrm{GL}}$, and $A_{\mathrm{Dol}}$ can be understood as a functor on $\mathfrak{B}_{\mathrm{GL}}$ with values in various $\CC$-linear categories. With this data, one naively expects to be able to label any closed 4-dimensional manifold with objects of $\mathfrak{B}_{\mathrm{GL}}$ and evaluate the theory to obtain a numerical invariant of the 4-manifold. 

More generally, suppose we have a (not necessarily closed) $(d \leq 4)$-manifold with defect, i.e. we have $d$-dimensional bulk regions separated by walls and corners of various codimensions $0 \leq k \leq 4$. In order to obtain a $(3-d)$-categorical invariant\footnote{We regard a $(-1)$-categorical invariant as a numerical invariant, and a $0$-categorical invariant as a chain complex.} from $A_{\mathrm{Dol}}$ on this picture, one should label the codimension $k$ pieces by $k$-morphisms in $\mathfrak{B}_{\mathrm{GL}}$. Such choices of labels for the defects of our $d$-manifold are termed choices of boundary conditions, and those that survive the supersymmetric twists most relevant for us were first observed in \cite{Gaiotto-Witten} as arising from Hamiltonian actions. 

Mathematically, the emergent relative Langlands program \cite{BZSV} initiated by Ben-Zvi--Sakellaridis--Venkatesh (BZSV in the following) studies a distinguished class of boundary conditions labeled by \textit{hyperspherical} Hamiltonian actions, which are conjectured to be well-behaved under $S$-duality. On the $A$-side, they study specializations of these boundary conditions at the de Rham and Betti points; the resulting objects are termed \textit{period sheaves}, in analogy with automorphic periods which play a central role in the arithmetic Langlands program \cite{Sakellaridis-Venkatesh} \cite{Jacquet-Friedberg} \cite{Jacquet-Shalika}. On the other hand, for the Dolbeault point on the $A$-side it has been understood from the work of \cite{Gaiotto} and \cite{Ginzburg-Rozenblyum} that certain holomorphic Lagrangians (\textit{BAA-branes}) over the Hitchin moduli space are the evaluations of $A_{\mathrm{Dol}}$ on codimension 1 labels. The relationship between BZSV's period sheaves and (BAA)-branes can be formalized by the following slogan:
\begin{center}
    the microlocal support of period sheaves are (BAA)-branes,
\end{center}
and it is the latter that play a central role in our current discussion.

Our first results (Theorem \ref{thm CS moduli stack} and Theorem \ref{thm GR Higgs bundles}) give moduli interpretions to certain (BAA)-branes arising from microlocal supports of BZSV's period sheaves: we observe that Collier--Sanders' global analogue of the Slodowy slice \cite{Collier-Sanders}, and the $G^\RR$-Higgs bundles studied extensively by \cite{Schaposnik2013}\cite{Schaposnik2014}\cite{GPPN} are examples of boundary conditions in $A_{\mathrm{Dol}}$ encoded by specific Hamiltonian actions. In particular, these moduli problems have natural presentations as (derived) holomorphic Lagrangians over the Hitchin moduli stack, and we recover the categories of the previously mentioned works by passing to $\CC$-points.

\subsection{Higher Teichmüller theory and 2-morphisms}

In recent work of Bradlow--Collier--García-Prada--Gothen--Oliveira \cite{Cayley}, a general Cayley correspondence is proposed: roughly speaking, one constructs Higgs bundles for a reductive group $G$ from the data of Higgs bundles for a Cayley partner subgroup along with various global holomorphic poly-differentials. The construction depends on the data of a \textit{magical $\mathfrak{sl}_2$-triple} and, under certain conditions on the starting data, the resulting flat $G$-connection obtained by composing with the nonabelian Hodge correspondence has monodromy lying in a real reductive subgroup $G^\RR$ of $G$. 

Interestingly, the image of the Cayley correspondence labels special components in the Higgs moduli space that are of topological significance in the real character variety. The first discovery in this direction was that of the Hitchin component, which was shown independently by Fock--Goncharov \cite{FockGoncharov2006} and Labourie \cite{Labourie2006} to consist entirely of discrete and faithful surface group representations. Such connected components of the $G^\R$-character variety were later termed \textit{higher Teichm\"uller components}, generalizing the usual notion of Teichmüller space when $G^\RR = \mathrm{PSL}_2(\RR)$. 

A simple real Lie group $G^\R$ admits a \textit{magical $\mathfrak{sl}_2$-triple} if a certain vector space involution on its complexified Lie algebra $\mathfrak{g}$ is a Lie algebra involution, a condition generalizing Hitchin's use of principal triples in the construction of the Hitchin components. In order to describe the shape of the Cayley correspondence of \cite{Cayley} we introduce temporarily the following notation: for $H^\RR$ a real form of a complex reductive group $H$, we write $\mathcal{M}_{L}(H^\RR)$ to denote the moduli space of stable $L$-twisted $H^\RR$-Higgs bundles for a line bundle $L$, where we omit the subscript when $L=K$. Associated to a magical $\mathfrak{sl}_2$-triple $\rho: \mathfrak{sl}_2 \to \mathfrak{g}$ is an injective, open and closed map 
\begin{align*}
        \Psi_\rho:\, \mathcal{M}_{K^{m_c+1}}(\tilde{G}^\R)\times\prod_{j=1}^{r(\rho)}\mathcal{M}_{K^{l_j+1}}(\R^+) \,\,\longrightarrow \,\,\mathcal{M}(G^\R),
    \end{align*}
    where $\tilde{G}^\R$ is a semisimple subgroup of $G^\R$, $r(\rho)$ is the rank of a certain subalgebra of $\mathfrak{g}$, and the numbers $m_c$ and $l_j$ are weights of the  $\rho(h)$-weight decomposition of $\mathfrak{g}$, all of which are Lie theoretic data associated to the magical triple (see \cite[Lemma~5.7]{Cayley}). We refer to the connected components in the image $\Psi_\rho$ as \textit{Cayley components}, and it has been shown that under the nonabelian Hodge correspondence, Cayley components are higher Teichm\"uller components in the $G^\R$-character variety \cite{GuichardEtAl2021}. Furthermore, it is conjectured that all higher Teichm\"uller components are Cayley components, apart from those parametrizing maximal representations of nontube type Hermitian groups. Evidence of the relation between $\Psi_\rho$ and Teichmüller components was suggested by the classification result \cite[Theorem C]{Cayley}, which showed that the list of real simple Lie groups admitting magical $\mathfrak{sl}_2$-triples coincides with those admitting $\Theta$-positive structures: 

    \begin{thm}[\cite{Cayley}, Theorems 3.1 and 8.14]
\label{thm:classification-canonical-real-forms}
      A simple real Lie group $G^\R$ admits a magical $\mathfrak{sl}_2$-triple if and only if the pair $(\mathfrak{g}^\R, \mathfrak{g})$ of its Lie algebra and its complexification belongs to one of the following four families: 
      \begin{enumerate}
      \item [(1)] $\mathfrak{g}$ is split real and $\mathfrak{g}^\R$ is the split real form,
      \item [(2)] $\mathfrak{g}$ is of type $A_{2n-1}, B_n, C_n, D_n, D_{2n}$ or $E_7$ and $\mathfrak{g}^\R$ is Hermitian of tube type,
      \item [(3)] $\mathfrak{g}=\mathfrak{so}_N\C$ and $\mathfrak{g}^\R=\mathfrak{so}(p,N-p)$, for $p\ge 3$,
      \item [(4)] $\mathfrak{g}$ is exceptional and $\mathfrak{g}^\R$ is one of the following:
        $$\begin{array}{c| c| c| c| c}
          \mathfrak{g} & E_6 & E_7& E_8 &F_4  \\
          \hline
          \mathfrak{g}^\R & \mathfrak{e}_{6(2)} & \mathfrak{e}_{7(-5)}& \mathfrak{e}_{8(-24)}& \mathfrak{f}_{4(4)}
        \end{array}$$
      \end{enumerate}
    \end{thm}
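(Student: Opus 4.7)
The plan is to first set up the magical condition as an explicit algebraic constraint on an $\fraksl_2$-triple $\rho: \fraksl_2 \to \frakg$ in the complexified Lie algebra, and then reduce the classification to a case-by-case analysis across simple Lie algebras. Writing the $\rho(h)$-weight decomposition $\frakg = \bigoplus_{m \in \Z} \frakg_m$, the magical involution $\sigma_\rho$ is the vector space involution that acts as $+1$ on $\frakg_m$ for even $m$ and by a prescribed sign on each odd weight space (chosen to agree with the Cayley transform of the standard $\fraksl_2$-triple). The content of the magical condition is the bracket compatibility $\sigma_\rho([X, Y]) = [\sigma_\rho(X), \sigma_\rho(Y)]$ for all $X, Y \in \frakg$; this is automatic on brackets between even weight spaces and therefore amounts to a system of identities governing brackets $[\frakg_m, \frakg_n]$ for $m, n$ of mixed parity.

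Next I would translate these bracket identities into root-combinatorial conditions. Fixing a Cartan subalgebra $\frakh \subset \frakg$ containing $\rho(h)$, the weight decomposition refines along roots: each root $\alpha$ has an integer height $\alpha(\rho(h))$, and the magical identity becomes a constraint on the signs of the structure constants along root strings, encoded by the weighted Dynkin diagram of $\rho$. The existence of a magical triple thus becomes equivalent to the existence of a nilpotent orbit whose weighted Dynkin diagram satisfies explicit parity and shape conditions at each node. A first reduction is to show that consistency of $\sigma_\rho$ with brackets across odd weight spaces forces tight control of the odd-weight content of $\frakg$, which rules out most orbits and singles out even or almost-even $\rho$.

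With the condition in combinatorial form, I would carry out the classification by type. For the classical types $A_n, B_n, C_n, D_n$ the weighted Dynkin diagrams of nilpotent orbits are parametrized by partitions (with type-dependent constraints), and one can read off directly which partitions yield magical triples; this produces the classical entries of families (1), (2), and (3). For the exceptional types I would combine the Bala--Carter classification of nilpotent orbits with the explicit tables of weighted Dynkin diagrams to enumerate candidates and verify the magical condition orbit by orbit, yielding the split exceptional case in (1) together with the entries of (4). In each matching pair $(\frakg, \frakg^\R)$ the real form is recovered as the fixed subalgebra of the composition of $\sigma_\rho$ with a compact involution commuting with $\rho(h)$, so the classification can be stated intrinsically in terms of the pair $(\frakg^\R, \frakg)$ as in the theorem statement.

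The main obstacle will be the exceptional case analysis: although the combinatorial formulation is uniform, verifying that \emph{precisely} the orbits listed (and no others) yield magical triples in $E_6, E_7, E_8, F_4$ requires careful book-keeping with root strings, and the negative direction---that no other real form can admit such a triple---is where the bulk of the work lies. A complementary conceptual input is the observation that the list produced by the classification coincides with the list of real forms admitting $\Theta$-positive structures; matching the two lists \emph{a posteriori} provides an independent consistency check and is what ties the statement back into higher Teichmüller theory.
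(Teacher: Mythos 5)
There is a genuine gap, and it begins at the very first step: you have mis-stated the involution whose multiplicativity defines ``magical.'' In the paper (and in \cite{Cayley}), $\sigma_\rho$ is \emph{not} determined by the parity of the $\rho(h)$-weight. It is defined on the decomposition of $\g$ into irreducible $\mathfrak{sl}_2$-submodules, $\g = \frakc \oplus \bigoplus_j \bigoplus_k \ad_f^k(V_{n_j})$, by $\sigma_\rho = +1$ on the centralizer $\frakc$ and $\sigma_\rho = (-1)^{k+1}$ on $\ad_f^k(V_{n_j})$; the sign depends on the depth $k$ below the highest-weight line of the particular irreducible summand, not on the weight $n_j - 2k$ alone. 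Since every magical triple turns out to be even, all weight spaces $\g_m$ with $m$ odd vanish, so your involution (``$+1$ on $\g_m$ for even $m$, a prescribed sign on odd weight spaces'') degenerates to the identity on every even triple. The magical condition would then be vacuous and every even nilpotent orbit would qualify, which contradicts the theorem (for instance, most even orbits in $E_8$ are not magical). Relatedly, your claim that multiplicativity is ``automatic on brackets between even weight spaces'' fails for the correct $\sigma_\rho$: a single even weight space $\g_m$ generally meets both eigenspaces $\frakh$ and $\frakm$, because distinct modules $W_{n_j}$ contribute to $\g_m$ with different signs. The entire root-combinatorial translation and the case-by-case analysis downstream are therefore testing the wrong condition.

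Two further remarks. First, the paper does not prove this statement; it is imported verbatim from \cite{Cayley} (Theorems 3.1 and 8.14), so there is no internal proof to compare against — but the broad architecture you propose (weighted Dynkin diagrams, partitions in classical types, Bala--Carter in exceptional types) is indeed the right \emph{shape} of argument and resembles what \cite{Cayley} does, once the correct involution is in place. Second, even with the correct $\sigma_\rho$, the hard content is exactly where you flag it: proving that \emph{no other} orbits work, and identifying the fixed real form via composition with a commuting compact involution. A repaired version of your plan would need to re-derive the combinatorial criterion from the condition $\sigma_\rho([x,y]) = [\sigma_\rho(x), \sigma_\rho(y)]$ with the $(-1)^{k+1}$ signs, which is where the constraint that the weighted Dynkin diagram carries only labels $0$ and $2$ actually comes from.
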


In the present article, we provide a new conceptual understanding of the morphism $\Psi_\rho$ of \textit{op. cit} in the Higgs setting: we observe that the Cayley correspondence can be regarded as an example of 2-morphisms in $A_{\mathrm{Dol}}$, i.e., a morphism of (BAA)-branes over the moduli stack of $G$-Higgs bundles (see Theorem \ref{thm main 1}). With this interpretation, we deduce geometric properties of the Cayley correspondence by analyzing the underlying Hamiltonian actions (Theorems \ref{thm main 2} and \ref{thm main 3}), and we extend the geometric properties established in Section 7 of \textit{loc. cit} to their natural generalizations at the level of moduli stacks.

\subsection{$S$-duality and (BAA)/(BBB) mirror symmetry}

One of the principal motivations of this work was to further our understanding of $S$-duality beyond relative Langlands duality (which can be viewed as a study of dual objects, or $S$-duality of boundary conditions), to the study of \textit{dual morphisms} (informally, $S$-duality in codimension 2). We give a brief introductory account of these ideas here, deferring to Remarks \ref{rem GL nonsense 1}, \ref{rem AKSZ}, \ref{rem GL nonsense 2} and the introductory part of Section \ref{sect S-duality} for a more developed discussion. 

Our entry point involves understanding the effect of $S$-duality on the Cayley correspondence, which we regard as a morphism of (BAA)-branes on the moduli stack of $G$-Higgs bundles. More precisely, one expects that there ought to be a pair of (BBB)-branes, i.e., the Fourier--Mukai duals of the source and targets of the Cayley correspondence, and a hyperholomorphic morphism between these (BBB)-branes which is Fourier--Mukai dual to the Cayley morphism. Schematically, we may summarize the situation via the following diagram (for precise formulations, we refer to Theorems \ref{conj 1} and \ref{conj 2} and Conjecture \ref{conj 3} in the body of this text)\footnote{Note that the Cayley correspondence arrow has its direction reversed since we consider its induced morphism on the ring of functions.}: 
\begin{equation} \label{eqn intro S-dual of Cayley diagram}
\begin{tikzcd}
	{\mathcal{O}(\text{Cayley space})} & {B_1} \\
	{\mathcal{O}(G^\RR\text{-Higgs bundles})} & {B_2}
	\arrow["{S\text{-dual}}", from=1-1, to=1-2]
	\arrow["{\text{Cayley corresp.}}", from=2-1, to=1-1]
	\arrow[from=1-2, to=1-1]
	\arrow["{??}", from=2-2, to=1-2]
	\arrow["{S\text{-dual}}"', from=2-1, to=2-2]
	\arrow[from=2-2, to=2-1]
\end{tikzcd}
\end{equation} 
where $B_1, B_2$ are (BBB)-branes Fourier--Mukai dual to the Caylay partner space and $G^\RR$-Higgs bundles, respectively, and the sought after morphism is marked by $??$. For this question to be well-posed, one would like to at least have candidates for the (BBB)-branes $B_1$ and $B_2$. 

To this end, we focus on the specific case of Cayley correspondences of Hermitian tube type in type $A$, where the real form involved is $G^\RR = \mathrm{PU}(n,n)$, although we expect that a similar analysis can be carried out in other related types which we call \textit{tempered}.\footnote{There are notable difficulties, however. For instance, in the split case of type $A$, one is driven to understand the Fourier--Mukai dual of $\mathrm{GL}_n(\RR)$-Higgs bundles. The complete answer should involve metaplectic covers of the Langlands dual group, interpreted appropriately in the Dolbeault setting.} Following the philosophy of functorial field theories explained in Section \ref{subsect intro to BC}, we may regard diagram \eqref{eqn intro S-dual of Cayley diagram} as the functorial image of the following diagram of Hamiltonian actions: consider the pair of Langlands dual reductive groups $G = \mathrm{PGL}_{2n}$ and $\check{G} = \mathrm{SL}_{2n}$, and the diagram\footnote{By convention, our $S$-dualities are \textit{contravariant} on 2-morphisms. This is why the Cayley morphism and the arrow marked by $??$ go in opposite directions.}
\begin{equation} \label{eqn intro S-dual of Cayley diagram 2}
\begin{tikzcd}
	{G \acts M_1 := T^*_\psi(U\mathrm{PGL}_n^\Delta \backslash G)} & {\check{G} \acts \check{M}_1 := T^*(\mathrm{Sp}_{2n} \backslash \check{G})} \\
	{G \acts M_2 := T^*(\mathrm{P}(\mathrm{GL}_n \times \mathrm{GL}_n) \backslash G)} & {\check{G} \acts \check{M}_2 := T^*(\mathbf{A}^{2n} \times^{\mathrm{Sp}_{2n}} \check{G})}
	\arrow["{S\text{-dual}}", from=1-1, to=1-2]
	\arrow["{\text{Cayley morphism}}"', from=1-1, to=2-1]
	\arrow[from=1-2, to=1-1]
	\arrow["{??}"', from=2-2, to=1-2]
	\arrow["{S\text{-dual}}"', from=2-1, to=2-2]
	\arrow[from=2-2, to=2-1]
\end{tikzcd}
\end{equation}
The precise definitions of these actions, and importantly their structures as graded Hamiltonian spaces, will be spelled out in Section \ref{subsect two Hamiltonian actions}. The functor we apply to \eqref{eqn intro S-dual of Cayley diagram 2} in order to obtain \eqref{eqn intro S-dual of Cayley diagram} can be understood as the functor of evaluation, in the sense of functorial TQFTs, of the ``Dolbeault A/B-twist Langlands functor", and \eqref{eqn intro S-dual of Cayley diagram 2} can be viewed as living in the (4-)category of boundary conditions $\mathfrak{B}_{\mathrm{GL}}$ underlying the Langlands TQFT. 

Let us explain in some detail the characters appearing in \eqref{eqn intro S-dual of Cayley diagram 2} and their relationships. On the left hand side column we have a morphism of Hamiltonian $G$-spaces $M_1 \to M_2$, for which we demonstrate that, under the Dolbeault A-twist interpretation of these boundary conditions, realizes the Cayley correspondence for $G^\RR = \mathrm{PU}(n,n)$. The passage from $G \acts M_i$ to $\check{G} \acts \check{M}_i$ for $i = 1,2$, marked by the horizontal arrows labeled ``$S$-duality", is a hypothetical involution on our category $\mathfrak{B}_{\mathrm{GL}}$ extending Langlands' duality of reductive groups $G \leftrightarrow \check{G}$. While $S$-duality of Hamiltonian actions, i.e., 1-morphisms, does not have a definition in general, there are two overlapping proposals which apply to our case of interest:
\begin{itemize}
    \item (BZSV's hyperspherical duality). For hyperspherical Hamiltonian actions \cite[Section 3.5]{BZSV}, of which $M_1, M_2$ are examples, there is a combinatorial procedure informed by global/local harmonic analysis of automorphic forms to construct the $S$-dual. From this perspective, the duality of $G \acts M_1 \leftrightarrow \check{G} \acts \check{M}_1$ first appeared as Jacquet--Shalika's integral representation of the residue of the exterior square $L$-function \cite{Jacquet-Shalika}, while the duality $G \acts M_2 \leftrightarrow \check{G} \acts \check{M}_2$ first appeared as Jacquet--Friedberg's extension of Hecke's integral representation of the standard $L$-function \cite{Jacquet-Friedberg}. 
    \item (Nakajima's $S$-duality). In \cite{Nakajima}, a definition of $S$-duality for polarized Hamiltonian actions is given (which applies readily to $M_2$); combined with Hanany--Witten transition moves, one can significantly extend the coverage of of this method to cover $M_1$ as well.
\end{itemize}
It is a consequence of the Local Conjecture of \cite{BZSV} that these two constructions should agree, when they overlap. Both methods do indeed lead to the same $\check{G} \acts \check{M}_i$ displayed in diagram \eqref{eqn intro S-dual of Cayley diagram 2}, and it is natural to wonder if $S$-duality extends to 2-morphisms, i.e., whether the ``Cayley morphism" $M_1 \to M_2$ has an $S$-dual in terms of a morphism between $\check{M}_1$ and $\check{M}_2$. 

We propose that the answer is yes, with an expected modification: 2-morphisms between Hamiltonian actions should include objects more flexible than just equivariant morphisms, but rather equivariant Lagrangian correspondences. Searching among these generalized morphisms, one notices an evident candidate: the morphism
\begin{equation}
    \mathbf{A}^{2n} \times^{\mathrm{Sp}_{2n}} \check{G} \longrightarrow \mathrm{Sp}_{2n} \backslash \check{G} 
\end{equation}
induced by projection to the zero section gives rise to a Lagrangian correspondence between their cotangent bundles
\begin{equation}
 \check{M}_2 \longleftarrow \mathcal{L} \longrightarrow \check{M}_1
\end{equation}
and our proposal can then be summarized succinctly as follows:
\begin{center}
    The $S$-dual to the Cayley morphism is $\mathcal{L}$. 
\end{center}
We obtain mathematical evidence for our proposal in the Dolbeault twist of the hypothetical Langlands TQFT, which is the content of Section \ref{sect S-duality}. More precisely, while we used the ``Dolbeault A-twist" functor $A_{\mathrm{Dol}}$ on the left hand side column of \eqref{eqn intro S-dual of Cayley diagram 2} to obtain that of \eqref{eqn intro S-dual of Cayley diagram}, under $S$-duality we will use a certain ``Dolbeault B-twist" functor $B_{\mathrm{Dol}}$ on the right hand side column of \eqref{eqn intro S-dual of Cayley diagram 2} to obtain the objects $B_1, B_2$ of \eqref{eqn intro S-dual of Cayley diagram}. We defer to \cite{CF} for a proper treatment of $B_{\mathrm{Dol}}$ and focus here only on the immediately relevant aspects to the Cayley correspondence:  
\begin{itemize}
    \item (Theorem \ref{conj 1} and Hitchin's Theorem \ref{conj 2}).\footnote{The dual pair for $i = 1$ will be established as Theorem \ref{conj 1}. The dual pair for $i = 2$ was first observed by Hitchin in Section 7 of \cite{Hitchin2013}, and examined again recently by \cite{HameisterMorrissey2024} \cite{HameisterEtAl2024} with slightly different language. Following \cite{Hitchin2013} closely, we state Hitchin's result as Theorem \ref{conj 2} and give another argument based on Hecke operations which is well-adapted to our goals.}  Generically over the Hitchin base, the objects $B_i := B_{\mathrm{Dol}}(\check{M}_i)$ are Fourier--Mukai duals of the Cayley space and $\mathrm{PU}(n,n)$-Higgs bundles, respectively.
    \item The sheaves $B_1, B_2$ are hyperholomorphic in the sense proposed by \cite{CF}, following Deligne--Simpson's twistor space construction. The morphism $B_{\mathrm{Dol}}(\mathcal{L})$ is a hyperholomorphic morphism, as expected of the $S$-dual of a morphism of (BAA)-branes.
\end{itemize}

\subsection{Organization}

In Section \ref{sect BAA branes} we recall the definition of graded Hamiltonian spaces and the notion of Gaiotto's Lagrangians attached to them. We introduce some combinatorial data inspired by \cite{BZSV} to label our Hamiltonian actions of interest, and in Section \ref{sect Slodowy and homogeneous} we show that Lagrangians associated to certain Hamiltonian actions recover Collier--Sanders' Slodowy category and $G^\RR$-Higgs bundles; this gives a uniform proof of their representability by (derived) algebraic stacks and clarifies their nature as a Lagrangian over the moduli stack of Higgs bundles. In Section \ref{sect Cayley morphism} we show that the Cayley correspondence (corresponding to the data of a magical $\mathfrak{sl}_2$-triple) can be extended to a morphism of Lagrangians over the Hitchin moduli stack, induced by morphisms of Hamiltonian actions; we can retrieve the statement at the level of moduli spaces from these derived Lagrangians by analyzing their derived and stacky structures, which we explain in Appendix \ref{appendix}.  Finally, in Section \ref{sect S-duality} we study $S$-duality phenomena and produce Conjecture \ref{conj 3} as a Dolbeault-form of certain examples of $S$-duality phenomena in codimension 2. We prove our conjecture for the magical real form $\mathrm{PU}(n,n)$.

\subsection{Notations and conventions}

\subsubsection{Curves} We use $\Sigma$ to denote a smooth projective algebraic curve over $\CC$ with genus $g(\Sigma) \geq 0$. We write $K$ for its canonical line bundle, and we choose once and for all a square root $K^{1/2}$ upon which many constructions are based. We will often drop this dependency from our notation. 

The choice of $K^{1/2}$ determines, in particular, a \textit{uniformizing Higgs bundle} with structure group $\mathrm{SL}_2$: 
\begin{equation} \label{eqn uniformizing bundle}
    \Theta = \left(E_0 = K^{1/2} \oplus K^{-1/2}, \phi_0 = \begin{bmatrix} 0&0\\1&0\end{bmatrix} \in H^0(\Sigma, \mathrm{ad}(E_0)\otimes K)\right).
\end{equation}
It is well-known that this is a stable Higgs bundle whose solution to Hitchin's equations is equivalent to a Hermitian metric on the tangent bundle $K^{-1}$ compatible with the conformal structure on $\Sigma$ viewed as a Riemann surface. 

Note that given an aribtrary line bundle $L$ and a choice of square root $L^{1/2}$, the same formula as \eqref{eqn uniformizing bundle} determines an $L$-twisted $\mathrm{SL}_2$-Higgs bundle which we continue to denote as $\Theta = (E_0, \phi_0)$ in the main text. 

\subsubsection{Groups, Lie algebras, and actions}
Whenever we use capital Roman letters, e.g., $G,H,P,U$ to denote algebraic groups over $\CC$, we will use the corresponding lower case fraktur letters, e.g., $\mathfrak{g,h,p,u}$, to denote their corresponding Lie algebras. 

For a reductive Lie algebra $\g = \mathrm{Lie}(G)$, we choose a $G$-invariant Killing form to identify $\g \simeq \g^*$ throughout, which also gives an isomorphism between the adjoint with the coadjoint representation of $G$. 

If a group $G$ (resp. Lie algebra $\g$) is equipped with a distinguished real form, we indicate the corresponding real Lie group by $G^\R$ (resp. $\g^\R$). 

All group actions will right actions unless otherwise stated, \textit{including linear representations}. This will not cause too much pain, since the representations of importance that we consider are usually self-dual (e.g., the adjoint representation for a semisimple group). 

We use subscripts on the Cartesian product symbol $\times$ to mean fiber products. This is mostly used in the context of moment maps: if $\mu_i: M_i \to \g^*$ for $i = 1,2$ are two Hamiltonian $G$-actions, then $M_1 \times_{\g^*} M_2$ denotes their fiber product over their respective moment maps. On the other hand, we use superscripts on the Cartesian product symbol to mean quotienting by a diagonal action: if $X, Y$ are two $G$-spaces, we write $X \times^G Y := (X \times Y)/G$.

\subsubsection{Maps, bundles and sections} \label{subsect bundles and sections}

For $\mathfrak{X}$ a derived stack, we write $\mathbf{T}_\mathfrak{X}$ and $\mathbf{L}_{\mathfrak{X}}$ for its tangent and cotangent complexes, respectively. If a morphism $f: \mathfrak{X} \to \mathfrak{Y}$ of derived stacks has a relative cotangent complex, we denote it by $\mathbf{L}_f$.

Let $X$ be a projective variety and $Y$ an Artin stack. We denote by
$$\mathrm{Map}(X,Y)$$
the (derived) mapping stack, whose tangent complex can be computed using the diagram 
$$\begin{tikzcd}
	{X \times \mathrm{Map}(X,Y)} & Y \\
	{\mathrm{Map}(X,Y)}
	\arrow["{\mathrm{ev}}", from=1-1, to=1-2]
	\arrow["p"', from=1-1, to=2-1]
\end{tikzcd}$$
and the formula
$$\mathbf{T}_{\mathrm{Map}(X,Y)} = p_* \mathrm{ev}^*\mathbf{T}_Y.$$

Now let $E$ be a (left) principal $G$-bundle on $X$, and let $Y$ be a (right) $G$-space. Consider the Cartesian diagram
$$
    \begin{tikzcd}
	{\mathrm{Sect}(X, Y \times^G E)} & {\mathrm{Map}(X, [Y/G])} \\
	{\{E\}} & {\mathrm{Bun}_G(X)}
	\arrow[from=1-1, to=1-2]
	\arrow[from=1-1, to=2-1]
	\arrow[from=1-2, to=2-2]
	\arrow[from=2-1, to=2-2]
\end{tikzcd}$$
which defines the moduli stack of $Y$-valued sections of $E$ in the upper left corner. 

We write $Y_E = Y \times^G E$, and if $Y$ is the adjoint representation of $G$ we write $\mathrm{ad}(E) := \g \times^G E$. If $H \subset G$ is a subgroup and $F$ is a principal $H$-bundle, we write $\mathrm{Ind}_H^G(F) := F \times^H G$.  

If $G = \Gm$ we will abuse notation slightly and denote a line bundle on $X$ and the associated $\Gm$-torsor by the same letter.

For a reductive group $G$ and $L$ a line bundle on $\Sigma$, we write
$$
    \mathrm{Higgs}_G^L := \mathrm{Sect}(\Sigma, [\g_L/G])
$$
for the moduli stack of $L$-twisted $G$-Higgs bundles on $\Sigma$. When $L = K$ we will uniformly drop the dependence on $L$ from our notation and simply write $\mathrm{Higgs}_G$. Similarly, we write
$$
    \mathrm{Bun}_G := \mathrm{Map}(\Sigma, BG)
$$
for the moduli stack of $G$-bundles on $\Sigma$. When $G = \Gm$, we write
$$
    \mathrm{Pic}(\Sigma) := \mathrm{Bun}_{\Gm}(\Sigma)
$$
for the moduli stack of line bundles on $\Sigma$.

\subsubsection{Symplectic geometry}

We will employ minimally the basic concepts and language of derived symplectic geometry \cite{PTVV}. (In fact, the spaces of most importance to us will turn out to be underived Deligne--Mumford stacks in the end, but it is convenient to formulate them in the realm of derived algebraic geometry \textit{a priori}).

Suppose $(\mathfrak{M}, \omega)$ is a (0-shifted) symplectic stack; the main example we are interested in is the Hitchin moduli stack $\mathfrak{M} = \mathrm{Higgs}_G$ which is naturally the cotangent stack to $\mathrm{Bun}_G$.\footnote{When the genus of $\Sigma$ is at least 2 and $G$ is semisimple, the derived enhancement of $\mathrm{Higgs}_G$ is trivial.} A Lagrangian morphism $\mu: \mathfrak{L} \to \mathfrak{M}$ is a null-homotopy for $\mu^*\omega$ such that the induced composition from the tangent complex $\mathbf{T}_\mathfrak{L}$ of $\mathfrak{L}$ to its cotangent complex $\mathbf{L}_{\mathfrak{L}}$ 
\begin{equation}
    \mathbf{T}_{\mathfrak{L}} \overset{\omega}{\longrightarrow} \mu^*\mathbf{L}_{\mathfrak{M}} \longrightarrow \mathbf{L}_{\mathfrak{L}}
\end{equation}
is a fiber sequence, i.e., that the induced morphism $\mathbf{T}_{\mathfrak{L}} \to \mathbf{L}_\mu[-1]$ is an equivalence.

\subsubsection{Jacobson--Morozov data} \label{subsubsect JM data}

Let $\g$ be a complex reductive Lie algebra. Given the data of a Lie algebra homomorphism $\rho: \mathfrak{sl}_2 = \langle e,f,h\rangle \to \g$ one can associate the following list of useful data which we generically term \textit{Jacobson--Morozov data}. First of all, according to $\rho(h)$-weights we can decompose
$$
    \g = \bigoplus_{j \in \ZZ} \, \g_j
$$
so that $\rho(h)|_{\g_j} = j \, \mathrm{Id}$. The positive (resp. nonnegative) weight spaces assemble to form a unipotent (resp. parabolic) Lie subalgebra which we denote by $\mathfrak{u} = \mathfrak{u}_\rho$ (resp. $\mathfrak{p} = \mathfrak{p}_\rho$) where we drop the dependence on $\rho$ if the context is clear. It is convenient also to write $\mathfrak{u}_+ \subset \mathfrak{u}$ for those weight spaces of weight $\geq 2$. 

The homomorphism $\rho$ gives rise to a representation of $\mathfrak{sl}_2$ on $\g$ by post composition with the adjoint representation. We write the decomposition into $\mathfrak{sl}_2$-irreducible representations as
$$
    \g = W_0^{\oplus M} \oplus \bigoplus_{j = 1}^N \, W_{n_j}
$$
where $n_j > 0$ are positive integers, and for an integer $k \geq 0$ we write $W_k$ for the irreducible representation of dimension $k+1$ (equivalently, of highest weight $k$). We distinguish 
$$
    V_{n_j} \subset W_{n_j}
$$
the complex line of the highest weight space. Note that the direct sum of these $V_{n_j}$'s, along with the $W_0$'s, is exactly the kernel of the adjoint action of $\rho(e)$: 
\begin{equation}
    \g_e := W_0^{\oplus M} \oplus \bigoplus_{j = 1}^N \, V_{n_j} = \mathrm{ker}(\mathrm{ad}_e). 
\end{equation} 
The $0$th pieces $W_0$ can be identified with the Lie algebra of the centralizer $C_\rho$ of $\rho$:
$$
    \mathfrak{c} := \mathrm{Lie}(C_\rho) = W_0^{\oplus M} \subseteq \g
$$
and we shall use the notation $\mathfrak{c}$ instead of $W_0^{\oplus M}$ when we want to emphasize its structure as a Lie algebra. 

If all the $n_j$'s that appear are even, then $\rho$ is called an \textit{even} $\mathfrak{sl}_2$-triple, and we write $n_j = 2m_j$ in this case. Since $\g_1 = 0$, we have $\mathfrak{u} = \mathfrak{u}_+$ and several constructions built from these Jacobson--Morozov data simplify considerably. In any case, since $W_{n_j}$ can be generated by applying $\mathrm{ad}_f$ to the highest weight line $V_{n_j}$, we can also write
\begin{equation}
  \label{eq:g-highest-weight-line-decomposition}
    \g = \mathfrak{c} \oplus \bigoplus_{j=1}^N \, \bigoplus_{k=0}^{n_j} \, \mathrm{ad}_f^{k}\cdot V_{n_j}.
\end{equation}

Finally we define the \textit{Slodowy slice} attached to $\rho$ as the following affine subspace
$$
    \mathcal{S}_\rho := f + \g_e \subset \g
$$
considered as an affine subspace of $\g \simeq \g^*$. It is a transverse slice to the $U$-action on $f + \mathfrak{u}^\perp_+$.

\subsection{Acknowledgments}
This collaboration mainly took place at Kavli IPMU in Tokyo. We are grateful for their hospitality and for the calm and comfortable environment that facilitated our work. E.C. would like to thank the second and third named authors for introducing him to higher Teichmüller theory, Emilio Franco for inspiring conversations, and Hiraku Nakajima for his feedback and support along with the JSPS postdoctoral fellowship which made this collaboration possible. E.H. would like to express her thanks to Anna Wienhard for her guidance and encouragement, and to the Max-Planck Insitute in Leipzig for supporting several trips to Japan. M.Y. is grateful to Laura Schaposnik for helpful discussions.

\section{Holomorphic Lagrangians in the Hitchin moduli space} \label{sect BAA branes}

In this section we review the construction due to Gaiotto \cite{Gaiotto} (and mathematically rigorously by Ginzburg--Rozenblyum \cite{Ginzburg-Rozenblyum}) of Lagrangians over the Hitchin moduli space. The input data is an arbitrary Hamiltonian space, but we will be most interested in those Hamiltonian actions arising from the relative Langlands program \cite{BZSV}. These are in particular indexed by group-theoretic data which we term \textit{BZSV triples} (Definition \ref{defn: boundaryCondition}). 

\subsection{Hamiltonian actions}

\begin{defn}\label{defn Hamiltonian action}
    Let $G$ be a reductive algebraic group. A \textit{Hamiltonian $G$-space} is a smooth symplectic variety $M$ with $G$-action, equipped with the the choice of a $G$-equivariant moment map $\mu: M \to \g^*$.
\end{defn}
\begin{defn} \label{defn graded Hamiltonian action}
    A \textit{graded Hamiltonian $G$-space} is a Hamiltonian $G$-space equipped with the commuting action of the multiplicative group $\Gm$ which scales the symplectic form with some weight $w \in \ZZ$. The integer $w$ is referred to as the \textit{weight} of the Hamiltonian action.
\end{defn}
Since the $\Gm$ scaling our Hamiltonian actions plays a distinguished role, we will follow notation introduced by \cite{BZSV} and denote it by $\Ggr$.

\begin{defn}
    Let $(M_1,\mu_1), (M_2,\mu_2)$ be two graded Hamiltonian $G$-spaces. A morphism $\phi: M_1 \to M_2$ is a $(G \times \Ggr)$-equivariant symplectic morphism making the following moment map diagram commute
    \begin{equation} \label{eqn morphism of graded Hamiltonian action}
        \begin{tikzcd}
	{M_1} && {M_2} \\
	& {\g^*}
	\arrow["\phi", from=1-1, to=1-3]
	\arrow["{\mu_1}"', from=1-1, to=2-2]
	\arrow["{\mu_2}", from=1-3, to=2-2]
\end{tikzcd}
    \end{equation}
\end{defn}

Although the definition is given at a natural level of generality, those graded Hamiltonian $G$-spaces that we consider will always be \textit{smooth and affine with $\Ggr$-weight 2}. 

\subsection{BZSV triples} \label{subsect BZSV triple}

We are especially interested in those graded Hamiltonian spaces that play a central role in the relative Langlands program \cite{BZSV} as proposed by Ben-Zvi--Sakellaridis--Venkatesh (BZSV in the following). In this section, we introduce group-theoretic data with which we construct these Hamiltonian actions via an induction procedure. 

\begin{defn} \label{defn: boundaryCondition}
    Let $G$ be a reductive group. A \textit{BZSV triple} for $G$ is a triple $(H, S, \rho)$, where
    \begin{itemize}
        \item $H$ is a reductive group with an inclusion $H \to G$, 
        \item $S$ is a finite dimensional symplectic representation of $H$ equipped with moment map $S \to \mathfrak{h}^*$, and
        \item $\rho: \mathfrak{sl}_2 \to \mathfrak{g}$ is a Lie algebra homomorphism whose image commutes with the image of the Lie algebra $\mathrm{Im}(\mathfrak{h}) \subset \g$. 
    \end{itemize}
\end{defn} 
Note that given a BZSV triple $(H,S,\rho)$, the symplectic vector space $S$ is in particular a Hamiltonian $H$-space. Since the moment map of linear representations are quadratic functions, if we give $S$ the linear scaling action of $\Ggr$ then $H \acts S$ is equipped naturally with the structure of a graded Hamiltonian $H$-space of weight 2.

\begin{rem}
    In fact, in the first paragraph of Gaiotto--Witten's paper on $S$-duality of boundary conditions \cite{Gaiotto-Witten}, they have already identified such triples as the labeling data of boundary conditions for 4-dimensional $\mathcal{N} = 4$ Yang--Mills theory. Families of topological twists of this gauge theory are shown to be responsible for geometric Langlands duality in \textit{op. cit}.
\end{rem}
We write 
$$\mathfrak{sl}_2 = \mathrm{span}\bigg( e = \begin{bmatrix} 0 & 1\\0&0\end{bmatrix},h = \begin{bmatrix} 1 & 0\\0&-1\end{bmatrix},f = \begin{bmatrix} 0&0\\1&0\end{bmatrix}\bigg)$$
for the standard basis of $\mathfrak{sl}_2$, and we often identify these elements with their images under $\rho$ in $\g \simeq \g^*$ when the context is clear. In particular, we often view $f$ as living in $\g^*$.

Given a BZSV triple $(H, S, \rho)$ for $G$, one can construct a graded Hamiltonian $G$-space via \textit{Whittaker induction} which we now review for the reader's convenience (although we often reduce to working directly with the BZSV triple, so one may view further constructions as taking BZSV triples as input). Recall (see Section \ref{subsubsect JM data}) the Jacobson--Morozov unipotent subgroup $U \subset G$ defined by $\rho$, whose Lie algebra consists of the strictly positive $\rho(h)$-weight spaces. We write $\mathfrak{u} = \mathrm{Lie}(U)$ for its Lie algebra, and $\mathfrak{u}_+$ for the summands with weight $\geq 2$. We regard $\mathfrak{u}/\mathfrak{u}_+$ as a Hamiltonian $HU$-space where
\begin{itemize}
    \item the $H$-action is via the adjoint action, 
    \item $U$ acts by translation via $U/U_+ \simeq \mathfrak{u}/\mathfrak{u}_+$, and
    \item the symplectic form is given by
    $$(x,y) \longmapsto \langle f, [x,y]\rangle.$$
\end{itemize}

The resulting Hamiltonian $HU$-space is denoted by $(\mathfrak{u}/\mathfrak{u}_+)_f$, and we can think of it as a pointed affine space with base point $f$. Finally, the Hamiltonian $G$-space attached to the triple $(H, S, \rho)$ is obtained by \textit{symplectic induction} from $HU$ to $G$ by the formula
\begin{equation} \label{eqn: Whittaker induction is symplectic induction}
    (H,S, \rho) \longmapsto M = \mathrm{WInd}^G_{H,\rho}(S):= \bigg(S \times (\mathfrak{u}/\mathfrak{u}_+)_f \times^{HU}_{(\mathfrak{h}+\mathfrak{u})^*} \, T^*G\bigg).
\end{equation}
By the theory of Slodowy slices, one can also rewrite $M$ as a vector bundle over the homogeneous space $H \backslash G$ (although its symplectic nature becomes less obvious)
\begin{equation} \label{eqn: Whittaker induction formula}
    M = \big[S \oplus (\mathfrak{m}\cap \g_e)\big] \times^H G
\end{equation}
where $\mathfrak{m} \subset \mathfrak{g}^*$ is the annihilator of $\mathfrak{h}$. We may equip $M$, as presented in \eqref{eqn: Whittaker induction formula}, with a $\Ggr$-action which gives it the structure of a graded Hamiltonain $G$-space: 
\begin{itemize}
    \item $\Ggr$ acts on the $G$ factor through left multiplication by $\exp(\rho)$ by identifying $\mathrm{Lie}(\Ggr) = \mathrm{span}_\CC(h) \subset \mathfrak{sl}_2$,
    \item $\Ggr$ acts on $S$ by linear scaling, and
    \item $\Ggr$ acts by weight $2+t$ on the weight $t$ component of $\g_e$. 
\end{itemize}

\begin{ex}
    Consider the BZSV triple $(H,S = 0, \rho = \mathrm{triv})$ for $G$, where $H \subset G$ is a reductive subgroup. Then we obtain the graded Hamiltonian $G$-space
    \begin{equation}
        M = \mathrm{WInd}^G_{H, \mathrm{triv}}(0) = \mathfrak{m} \times^H G \simeq T^*(H \backslash G)
    \end{equation}
    with $\Ggr$ acting by weight $2$ on $\mathfrak{m}$, i.e., scaling the cotangent fibers with weight 2. The moment map is given by the formula
    \begin{equation}
        \mathfrak{m} \times^H G \longrightarrow \g^*
    \end{equation}
    $$(\xi, g) \longmapsto \mathrm{Ad}_{g^{-1}}(\xi)$$
    where $\mathrm{Ad}_{g^{-1}}$ is the right adjoint action (equivalently, left adjoint action by $g^{-1}$).
\end{ex}

\begin{ex}
    Consider the BZSV triple $(1,0,\rho = \rho_{\mathrm{prin}})$ for $G$, where $\rho_{\mathrm{prin}}$ is a principal $\mathfrak{sl}_2$ in $\g$. Then $M$ is the equivariant Slodowy slice 
    \begin{equation}
 M = \mathrm{WInd}_{1,\rho}^G(0) = \mathcal{S}_{\mathrm{\rho}_{\mathrm{prin}}} \times G = (f+\g_e) \times G
    \end{equation}
    and can be alternatively viewed as a twisted symplectic reduction of $T^*G$ by $U$ (on the left) at the moment map fiber of $f \in \mathfrak{u}^*$. With the obvious map $M \to U\backslash G$ by projection, it acquires the structure of a $T^*$-torsor over $U \backslash G$, and can be understood as a twisted cotangent bundle. 
\end{ex}

We verify a simple weight condition for those graded Hamiltonian space arising from BZSV triples. 

\begin{lem}
    Let $(H,S,\rho)$ be a BZSV triple for $G$, and let $M = \mathrm{WInd}_{H,\rho}^G(S)$ be the graded Hamiltonian $G$-space obtained by Whittaker induction. Then the moment map $\mu: M \to \g^*$ is $\Ggr$-equivariant where $\Ggr$ scales $\g^*$ by weight $2$. 
\end{lem}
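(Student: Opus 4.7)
The plan is to work directly with the Slodowy-slice presentation \eqref{eqn: Whittaker induction formula} of $M = \mathrm{WInd}^G_{H,\rho}(S) = [S \oplus (\mathfrak{m}\cap \g_e)] \times^H G$, write down the moment map explicitly, and then check the weight condition factor by factor. Using the Killing form identification $\g \simeq \g^*$ (and the induced decomposition $\g^* = \mathfrak{h}^* \oplus \mathfrak{m}$), the moment map reads
\begin{equation*}
    \mu : [S \oplus (\mathfrak{m}\cap \g_e)] \times^H G \longrightarrow \g^*, \qquad (s, x, g) \longmapsto \mathrm{Ad}_{g^{-1}}\bigl(\mu_S(s) + f + x\bigr),
\end{equation*}
where $\mu_S : S \to \mathfrak{h}^*$ is the moment map of the symplectic representation and we view $\mu_S(s)$ as an element of $\g^*$ via the splitting.

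Next I will translate the three pieces of the $\Ggr$-action listed in the bullet points after \eqref{eqn: Whittaker induction formula} into a single statement about the linear action on $\g^*$. Let $\eta : \Ggr \to G$ be the one-parameter subgroup generated by $\rho(h)$. A direct weight count shows that on $f + \g_e \subset \g \simeq \g^*$, the prescribed action $t \cdot (f + \sum_j x_j) = f + \sum_j t^{2+j} x_j$ agrees with $y \mapsto \mathrm{Ad}_{\eta(t)}(t^2 y)$: indeed $f$ has $\mathrm{ad}(h)$-weight $-2$, so $\mathrm{Ad}_{\eta(t)}(t^2 f) = t^{-2}\cdot t^2 f = f$, while for $x_j \in (\g_e)_j$ we get $\mathrm{Ad}_{\eta(t)}(t^2 x_j) = t^{2+j} x_j$, as required. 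The scaling on $S$ has weight $1$, and since $\mu_S$ is quadratic this forces $\mu_S(t\cdot s) = t^2 \mu_S(s)$.

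Now I will assemble these observations. For $m = (s, x, g) \in M$, the $\Ggr$-action is $t \cdot m = (t\cdot s,\, t \cdot x,\, \eta(t) g)$, so
\begin{align*}
    \mu(t \cdot m)
    &= \mathrm{Ad}_{g^{-1}} \mathrm{Ad}_{\eta(t)^{-1}}\bigl(\mu_S(t\cdot s) + (t\cdot (f+x))\bigr) \\
    &= \mathrm{Ad}_{g^{-1}} \mathrm{Ad}_{\eta(t)^{-1}}\bigl(t^2\mu_S(s) + \mathrm{Ad}_{\eta(t)}(t^2(f+x))\bigr) \\
    &= \mathrm{Ad}_{g^{-1}}\bigl(t^2 \mathrm{Ad}_{\eta(t)^{-1}}\mu_S(s) + t^2(f+x)\bigr).
\end{align*}
The crucial point is that the BZSV condition imposes $[\mathfrak{h}, \rho(\mathfrak{sl}_2)] = 0$, so $\eta(t)$ centralizes $\mathfrak{h}$, and therefore $\mathrm{Ad}_{\eta(t)^{-1}}$ acts trivially on $\mathfrak{h}^* \ni \mu_S(s)$. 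The displayed expression simplifies to $t^2 \mathrm{Ad}_{g^{-1}}(\mu_S(s) + f + x) = t^2 \mu(m)$, which is the desired weight-$2$ equivariance.

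The only potential subtlety is the bookkeeping in the second step: one must verify that the intrinsic weight-$(2+j)$ scaling on $(\g_e)_j$ conspires with the left translation by $\eta(t)$ on $G$ to produce a global factor of $t^2$ with no residual $\mathrm{Ad}_{\eta(t)}$ twist. This is precisely the content of the weight-matching identity $t \cdot y = \mathrm{Ad}_{\eta(t)}(t^2 y)$ on $f + \g_e$; once this is in place, the rest of the argument is a direct substitution that uses only the commutation hypothesis built into the definition of a BZSV triple.
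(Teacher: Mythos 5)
Your proof is correct and follows essentially the same route as the paper's: both use the Slodowy-slice presentation \eqref{eqn: Whittaker induction formula}, quadraticity of $\mu_S$ for the $S$-factor, and the weight-matching identity $\mathrm{Ad}_{\eta(t)^{-1}}(t^{2+n_j}v_j)=t^2 v_j$ for the $\g_e$-factor. If anything, you are slightly more explicit than the paper in noting that the BZSV commutation hypothesis is what lets $\mathrm{Ad}_{\eta(t)^{-1}}$ act trivially on $\mu_S(s)\in\mathfrak{h}^*$, a point the paper leaves implicit.
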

\begin{proof}
    Write $S = T^*V$ for a Lagrangian subspace $V \subset S$ (we do not assume that $V$ is a subrepresentation of $H$). Note that the moment map of $S$ with respect to $H$ is the canonical morphism
    \begin{equation}
       \mu_S: S = T^*V \longrightarrow \mathrm{End}(S)^* \longrightarrow \mathfrak{h}^* \simeq \mathfrak{h}
    \end{equation}
    sending a point $(x,\xi) \in T^*V = V \oplus V^*$ to the endomorphism $x \xi$ followed by the natural projection dual to $\mathfrak{h} \subset \mathrm{End}(S)$. This morphism is quadratic so we see that $\mu_S$ is indeed $\Ggr$-equivariant with respect to the weight 2 scaling action on $\mathfrak{h}^*$. 

    By \eqref{eqn: Whittaker induction formula} we can write $M$ explicitly as $M = S \oplus (\mathfrak{m} \cap \g_e) \times^H G$, with $\Ggr$ action specified by the bullet points under \eqref{eqn: Whittaker induction formula}. In the $S$-coordinate the moment map is the composition of the quadratic $H$-moment map in the previous paragraph with the linear map $\mathfrak{h}^* \to \mathfrak{g}^*$. In the $\mathfrak{m} \cap \g_e$-coordinate, we see that for $t \in \Ggr$ and $(v_j,g) \in V_{n_j} \times^H G$, 
    $$
        t \cdot (v_j, g) = (t^{2+n_j}v_j, t^\rho g) \overset{\mu}{\longmapsto} t^{2+n_j}\mathrm{Ad}_{(t^\rho g)^{-1}}(v_j) = t^2\mathrm{Ad}_{g}(v_j)
    $$
    since $\mathrm{Ad}_{t^{-\rho}}$ acts on $v_j$ by weight $-n_j$ by definition, so $\mu$ is indeed of weight 2. 
\end{proof}

As explained in \S 3.5.1 of \cite{BZSV}, the Hamiltonian $G$-space built out of the above Whittaker induction procedure will always satisfy 3 out of the 5 conditions of being a \textit{hyperspherical $G$-variety}; experts will observe that we are dropping the coisotropicity condition (which is a smallness condition on $M$ relative to the $G$-action), and the condition that the stabilizer of a generic point in $M$ be connected since they do not play a role for us.

\subsection{Gaiotto's Lagrangians and BAA branes}

As explained by Gaiotto \cite{Gaiotto} in the physical context and later mathematically by Ginzburg--Rozenblyum \cite{Ginzburg-Rozenblyum}, one can use graded Hamiltonian actions as labels for A-type boundary conditions for the Hitchin system. The key step is to consider a certain Lagrangian object over the Hitchin moduli space. 

\begin{defn}[Gaiotto, Ginzburg--Rozenblyum] \label{defn Gaiotto Lagrangian}
    Let $(M,\mu: M \to \mathfrak{g}^*)$ be a graded Hamiltonian $G$-space of weight 2, and let $L$ be a line bundle on $\Sigma$ for which we choose a square root $L^{1/2}$. We define \textit{Gaiotto's Lagrangian} attached to $M$ as the derived mapping space
    \begin{equation}
        \mathrm{Lag}^L(M) := \mathrm{Sect}(\Sigma, [M_{L^{1/2}}/G]) = \left\{(E,s) : \begin{matrix} E \in \mathrm{Bun}_G \\ \text{ and } s \in \mathrm{Sect}(\Sigma, M_{EL^{1/2}})\end{matrix} \right\}
    \end{equation}
    which has a natural \textit{global moment map} 
    \begin{equation}
        \mu_M: \mathrm{Lag}^L(M) = \mathrm{Sect}(\Sigma, [M_{L^{1/2}}/G]) \longrightarrow \mathrm{Sect}(\Sigma, [\g_L/G]) = \mathrm{Higgs}_G^L
    \end{equation}
    $$(E,s) \longmapsto (E,\mu(s))$$
    to the moduli stack of $L$-twisted $G$-Higgs bundles.
\end{defn}

Given a graded Hamiltonian $G$-space $\mu: M \to \g^*$, note that the natural morphism 
\begin{equation}\label{eq coarse quotient of Hamiltonian action}
    [M/G] \longrightarrow M\sslash G
\end{equation}
from the stack to the coarse quotient induces a morphism
\begin{equation}
    \chi_M^L: \mathrm{Lag}^L(M) \longrightarrow \mathrm{Sect}(\Sigma, (M\sslash G)_{L^{1/2}}) =: \mathcal{A}_M^L
\end{equation}
which, in the case when $L = K$ and $M$ is replaced with $\g^*$, gives the usual Hitchin morphism to the Hitchin base. Writing the $L$-twisted Hitchin base as $\mathcal{A}_G^L = \mathrm{Sect}(\Sigma, \g^*_L\sslash G)$, we have a commutative diagram 
\begin{equation} \label{eqn relative Hitchin fibration}
    \begin{tikzcd}
	{\mathrm{Lag}^L(M)} & {\mathrm{Higgs}_G^L} \\
	{\mathcal{A}_M^L} & {\mathcal{A}^L_G}
	\arrow["{\mu_M^L}", from=1-1, to=1-2]
	\arrow["{\chi_M^L}"', from=1-1, to=2-1]
	\arrow["{\chi_G^L}", from=1-2, to=2-2]
	\arrow[from=2-1, to=2-2]
\end{tikzcd}
\end{equation}
where the bottom horizontal arrow is induced by the $\Ggr$-equivariant moment map on coarse quotients $\mu: M\sslash G \to \g^*\sslash G$.

The main theorem of \cite{Ginzburg-Rozenblyum} states that, in the case when $L = K$ so that $\mathrm{Higgs}_G^K = \mathrm{Higgs}_G$ is the usual moduli stack of $G$-Higgs bundles, $\mu_M$ has the structure of a Lagrangian morphism for arbitrary graded Hamiltonian $G$-spaces $M$. Their argument uses the following two basic principles of derived symplectic geometry:
\begin{itemize}
    \item The morphism $[M/G] \to [\g^*/G] = T^*[1]BG$ can be equipped with the structure of a 1-shifted Lagrangian, and
    \item (a $K^{1/2}$-twisted version of) the fact that applying the functor $\mathrm{Map}(\Sigma, \, \cdot \, )$, where $\Sigma$ is Calabi-Yau, to a 1-shifted Lagrangian morphism yields a Lagrangian morphism. 
\end{itemize}

When we consider $L = K$, we will generally omit $L$ from our notation. Note that for general $L$, the morphsim $\mu_M$ is not Lagrangian in any sense, but we will continue to refer to $\mathrm{Lag}^L(M)$ as Gaiotto's Lagrangian. 

\begin{ex}
    It is important to note that $\mu_M: \mathrm{Lag}(M) \to \mathrm{Higgs}_G$ is Lagrangian in a derived sense, and the naïve comparison with Lagrangian submanifolds (or even finite covers of Lagrangian submanifolds) of the (classical, schematic) moduli space of stable $G$-Higgs bundles may not be direct. For instance, consider $\Gm$ with the trivial action on $T^*\mathbf{A}^1$ and $\Ggr$ scaling $T^*\mathbf{A}^1$ as a vector space. We consider the moment map $\mu: T^*\mathbf{A}^1 \to \mathrm{Lie}(\Gm)^*$ sending everything to 0. Then the underlying classical stack of $\mathrm{Lag}(T^*\mathbf{A}^1)$ is the moduli of a line bundle $L$, and two (unrelated) global sections $s_1,s_2$ of $K^{1/2}$. The global moment map $\mu_M$ sends $(L, s_1,s_2)$ to $(L,0)$, the Higgs line bundle with trivial Higgs field, and there is no classical sense in which this is a Lagrangian over $\mathrm{Higgs}_{\Gm}$.
\end{ex}

Following the the notion of BZSV's period sheaves in the de Rham and Betti settings, we make the following definition.
\begin{defn}
    Let $(M, \mu: M \to \g^*)$ be a graded Hamiltonian $G$-space of weight 2. We define the \textit{Dolbeault period sheaf} attached to $M$ as 
    \begin{equation}
        A_{\mathrm{Dol}}(M) := \mu_{M,*}\mathcal{O} \in \mathrm{QC}(\mathrm{Higgs}_G^L).
    \end{equation}
\end{defn}

Let us apply the previous constructions to the following specialized scenario. Consider two reductive groups $C$ and $G$, with a joint Hamiltonian action $C \times G \acts M$ of weight 2. Then Definition \ref{defn Gaiotto Lagrangian} defines a morphism
    \begin{equation}
        \mu_M: \mathrm{Lag}^L(M) \longrightarrow \mathrm{Higgs}_C^L \times \mathrm{Higgs}_G^L.
    \end{equation}

    We can then view the Dolbeault period sheaf $A_{\mathrm{Dol}}(M)$ as a Fourier--Mukai kernel which defines a functor
    \begin{equation}
        \Phi_M: \mathrm{QC}(\mathrm{Higgs}_C^L) \longrightarrow \mathrm{QC}(\mathrm{Higgs}_G^L)
    \end{equation}
defined by the diagram
\begin{equation}
    \begin{tikzcd}
	& {\mathrm{Higgs}_C^L \times \mathrm{Higgs}_G^L} \\
	{\mathrm{Higgs}_C^L} && {\mathrm{Higgs}_G^L}
	\arrow["p"', from=1-2, to=2-1]
	\arrow["q", from=1-2, to=2-3]
\end{tikzcd}
\end{equation}
using the usual formula
\begin{equation}
    \Phi_M := q_*\big(A_{\mathrm{Dol}}(M)\otimes p^*( \, \cdot \, )\big).
\end{equation}
 
\begin{rem} \label{rem GL nonsense 1}
    There exists, at a physical level of rigor, a 4-category $\mathfrak{B}_{\mathrm{GL}}$ of boundary conditions for geometric Langlands (see, for instance Section 2 of \cite{Oblomkov-Rozansky} for a mathematically inclined summary). The first two layers of this category $\mathfrak{B}_{\mathrm{GL}}$ are defined as follows: objects of this category are reductive groups, and the 1-morphisms between two reductive groups $C$ and $G$ are joint Hamiltonian actions of $C \times G$. 
    
    Our construction of $\Phi_M$ is an A-twist interpretation of such an interface between $C$-gauge theory and $G$-gauge theory. More precisely, we have built a functor $A_{\mathrm{Dol}}$ (A-twist of Dolbeault geometric Langlands) on $\mathfrak{B}_{\mathrm{GL}}$ which assigns
    \begin{equation}
        \mathfrak{B}_{\mathrm{GL}}\ni G \text{ reductive group } \overset{A_{\mathrm{Dol}}}{\longmapsto} \mathrm{QC}(\mathrm{Higgs}_G),
    \end{equation}
    \begin{equation}
        \bigg(C \overset{M}{\longrightarrow} G\bigg) \text{ 1-morphism } \overset{A_{\mathrm{Dol}}}{\longmapsto} \bigg(\Phi_M: A_{\mathrm{Dol}}(C) \to A_{\mathrm{Dol}}(G)\bigg).
    \end{equation}

    The de Rham and Betti versions of these functors are closely related to the period sheaves of the relative Langlands program; see for instance Section 12.3.6 of \cite{BZSV} where the relevant example appears in the automorphic literature under the name of \textit{theta correspondences}. 
    \end{rem}
    
\begin{rem} \label{rem AKSZ}
    Another, more ``nonlinear" but rigorous way to interpret the functor $A_{\mathrm{Dol}}$ is to take as its target not dg-categories, but rather the category $\mathrm{LagCorr}$ of Lagrangian correspondences, using the AKSZ construction (see \cite{Calaque-Haugseng-Scheimbauer} and Section 0.3 of \cite{Safranov}). We first consider $\mathfrak{B}_{\mathrm{GL}}$ to be (a full subcategory of) the 1-shifted Weinstein category whose first two layers are defined by the following data:
    \begin{itemize}
        \item The objects of $\mathfrak{B}_{\mathrm{GL}}$ are reductive groups $G$, which label the 1-shifted sympletic stack $T^*[1]BG \simeq [\g^*/G]$.
        \item The 1-morphisms are given by 1-shifted Lagrangian correspondences. Hamiltonian actions give examples of 1-morphisms by considering their equivariant moment maps. 
    \end{itemize}
    The category $\mathrm{LagCorr}$ is defined identically, with a shift by $(-1)$:
    \begin{itemize}
        \item The objects of $\mathrm{LagCorr}$ are (0-shifted) symplectic stacks, 
        \item the 1-morphisms are given by Lagrangian correspondences.
    \end{itemize}
    We may then apply AKSZ formalism to define, for a Riemann surface $\Sigma$ equipped with a choice of $K^{1/2}$, a functor $A_{\mathrm{Dol}}: \mathfrak{B}_{\mathrm{GL}} \to \mathrm{LagCorr}$ as follows:
    \begin{equation}
    \mathfrak{B}_{\mathrm{GL}} \ni G \text{ reductive group } \overset{A_{\mathrm{Dol}}}{\longmapsto} \mathrm{Higgs}_G,
    \end{equation}
    \begin{equation}
        \bigg(C \overset{M}{\longrightarrow} G\bigg) \text{ 1-morphism } \overset{A_{\mathrm{Dol}}}{\longmapsto} \bigg(\mathrm{Lag}(M) \to \mathrm{Higgs}_C \times \mathrm{Higgs}_G\bigg).
    \end{equation}

\end{rem}

\subsection{2-Functoriality} \label{subsect 2functoriality}

The primary advantage of describing Gaiotto's Lagrangians as mapping stacks is that functoriality properties of the construction $M \mapsto \mathrm{Lag}(M)$ become evident. Indeed, if we have a morphism of graded Hamiltonian $G$-spaces of weight 2
\begin{equation} \label{eqn morphism of Hamiltonian action} \begin{tikzcd}
	{M_1} && {M_2} \\
	& {\g^*}
	\arrow["\phi", from=1-1, to=1-3]
	\arrow["{\mu_1}"', from=1-1, to=2-2]
	\arrow["{\mu_2}", from=1-3, to=2-2]
\end{tikzcd}\end{equation}
then taking $L^{1/2}$-sections from $\Sigma$ into the above diagram yields a morphism (of Lagrangians when $L = K$) over $\mathrm{Higgs}_G^L$:
\begin{equation}\label{eqn functoriality of Lag} \begin{tikzcd}
	{\mathrm{Lag}^L(M_1)} && {\mathrm{Lag}^L(M_2)} \\
	& {\mathrm{Higgs}_G^L}
	\arrow["{\mathrm{Lag}(\phi)}", from=1-1, to=1-3]
	\arrow["{\mu_{M_1}}"', from=1-1, to=2-2]
	\arrow["{\mu_{M_2}}", from=1-3, to=2-2]
\end{tikzcd}\end{equation}
We will see in Section \ref{sect Cayley morphism} that the Cayley correspondence, a well-known construction in higher Teichmüller theory, can be seen as an example of such a morphism of Lagrangians.

\begin{rem} \label{rem GL nonsense 2}
    Returning to the expectations outlined in Remark \ref{rem GL nonsense 1}, we can try to extend the definition of $A_{\mathrm{Dol}}$ to 2-morphsims in the category $\mathfrak{B}_{\mathrm{GL}}$. If $M_1, M_2$ are two Hamiltonian actions, then a 2-morphism in $\mathfrak{B}_{\mathrm{GL}}$ is an equivariant Lagrangian correspondence $\mathcal{L} \to M_1^- \times M_2$ (where $M_1^-$ denotes the space $M_1$ with negated symplectic form). In the simplest case, a morphism of Hamiltonian actions $\phi: M_1 \to M_2$ gives a Lagrangian correspondence $\mathrm{Graph}(\phi) \subset M_1^- \times M_2$, and we can interpret diagram \eqref{eqn functoriality of Lag} as the evaluation of $A_{\mathrm{Dol}}$ on such a 2-morphism, i.e., 
    \begin{equation}
        \left(M_1 \overset{\mathrm{Graph}(\phi)}{\longrightarrow} M_2\right) \overset{A_{\mathrm{Dol}}}\longmapsto \left(A_{\mathrm{Dol}}(M_2) \to A_{\mathrm{Dol}}(M_1)\right)
    \end{equation}
    which induces a natural transformation of Fourier--Mukai functors $\Phi_{M_2} \to \Phi_{M_1}$. Note that since we consider functions on Gaiotto's Lagrangians, the functor $A_{\mathrm{Dol}}$ is \textit{contravariant} on 2-morphisms.
    
    The AKSZ version of $A_{\mathrm{Dol}}$, following Remark \ref{rem AKSZ}, interprets the 2-morphism $\phi$ in $\mathfrak{B}_{\mathrm{GL}}$ using the graph of $\mathrm{Lag}(\phi)$:
    \begin{equation}
        \bigg(M_1 \overset{\phi}{\to} M_2 \bigg) \overset{A_{\mathrm{Dol}}}{\longmapsto} \, \bigg( \mathrm{Graph}(\mathrm{Lag}(\phi)) \longrightarrow  \mathrm{Lag}(M_1) \times_{\mathrm{Higgs}_G} \mathrm{Lag}(M_2) \bigg).
    \end{equation}
    See also Theorem A of \cite{CF} for an analogous construction on the B-twist (more precisely, a hyperholomorphic family of twists). 
\end{rem}

\section{Slodowy slices and homogeneous cotangent bundles} \label{sect Slodowy and homogeneous}

We are ready to analyze those Lagrangians defined in Section \ref{sect BAA branes} arising from specific Hamiltonian actions labeled by BZSV triples. We find that this produces moduli stacks representing several categories of fundamental importance: Collier--Sanders' global Slodowy category \cite{Collier-Sanders} (in fact we slightly generalize their construction to cover the case of odd $\mathfrak{sl}_2$'s), and the category of $G^\RR$-Higgs bundles. 

\subsection{Collier--Sanders' Slodowy category} \label{subsect Collier Sanders}

In this section we review Collier--Sanders' construction of the Slodowy category and use it to describe a category of $(G,P_\rho)$-Higgs bundles arising from a $\rho: \mathfrak{sl}_2 \to \g$. Let $C_\rho := Z_G(\rho)$ be the centralizer subgroup of $\rho$, with Lie algebra $\frakc = \Lie(C_\rho)$. 

\begin{defn}[Definition 5.1 \cite{Collier-Sanders}] \label{defn Slodowy category}
  Let $\rho$ be an even $\mathfrak{sl}_2$ triple. The $\rho$-Slodowy category $\calB_{\rho}$ contains the following data:
  \begin{itemize}
  \item Objects are triples $(F,\phi,\set{\psi_{n_j}})$, where $(F,\phi)$ is a ${C_\rho}$-Higgs bundle on $\Sigma$, and $\psi_{n_j}$ are the following collection of poly-differentials. Recalling the notation from Section \ref{subsubsect JM data}, let $\set{n_j}$ be the set of highest weights in the $\rho(h)$-weight decomposition of $\g$, and let $V_{n_j} \subset W_{n_j}$ be the highest weight lines. Since $[\mathfrak{c},V_{n_j}] \subset V_{n_j}$, we may view $V_{n_j}$ as ${C_\rho}$-representations, and we let
    \begin{align*}
      \psi_{n_j} \in H^0(\Sigma, (V_{n_j})_{F} \otimes K^{n_j/2+1}). 
    \end{align*}
  \item A morphism between two objects $(F,\phi,\set{\psi_{n_j}})$ and $(F',\phi',\set{\psi_{n_j}'})$ is an isomorphism of Higgs bundles $\Phi: (F, \phi) \overset{\sim}{\to} (F', \phi')$ such that $\psi_{n_j}=\Phi^*\psi_{n_j}'$ for all $j$.
  \end{itemize}
\end{defn}
Continuing to assume that $\rho$ is even, we write $n_j = 2m_j$. Note that the auxiliary sections $\psi_{n_j} = \psi_{2m_j}$'s lie in $H^0(\Sigma, (V_{2m_j})_{F} \otimes K^{m_j+1})$ so there was no need to choose a $K^{1/2}$ for this construction a posteriori. We may define the following $\C^*$-action on $\calB_{\rho}$, extending the usual action of scaling the Higgs field: 
\begin{align}
  \label{eq:Cstar-action-on-Slodowy}
  t \cdot (F, \phi, \set{\psi_{2m_j}}) = (F, t\phi, \set{t^{m_j+1}\psi_{2m_j}}). 
\end{align}

From the data parametrized by the category $\mathcal{B}_\rho$, Collier--Sanders construct certain Higgs-theoretic analogues of parabolic opers, i.e., principal $G$-bundles with a parabolic reduction, whose Higgs field is ``as transverse as possible" to the reduction. 

\begin{defn}[Definition 4.1 \cite{Collier-Sanders}] \label{defn (G,P)-Higgs}
  Let $\rho$ be an even $\mathfrak{sl}_2$-triple. A $(G,P_\rho)$-Higgs bundle is a triple $(E,F,\phi)$, where $(E,\phi)$ is a $G$-Higgs bundle, and $E = F \times^{P_\rho} G$ is a reduction of structure group to $P_\rho$. Moreover the second fundamental form $\mathrm{SFF}_{F}(\phi)$ of $\phi$ relative to $F$ satisfies a transversality condition: 
  \begin{align*}
    \mathrm{SFF}_{F}(\phi) \subset H^0(\Sigma, O_K), 
  \end{align*}
  where $O \subset \frakg/\frakp_\rho$ is the unique open dense $P_\rho$-orbit (equivalently, $L_\rho$-orbit for the Levi subgroup $L_\rho \subset P_\rho$) in the subspace 
  $$
      \{x \in \g : \mathrm{ad}_x(\mathfrak{u}_\rho) \subseteq \mathfrak{p}_\rho\}/\mathfrak{p}_\rho \subset \g/\mathfrak{p}_\rho.
  $$
  Following \textit{loc. cit}, we denote the category of $(G,P_\rho)$-Higgs bundles by $\mathrm{Op}^0(G,P_\rho)$, which we call the category of $0$-opers.\footnote{In reference to the fact Higgs bundles can be regarded as $\lambda$-connections for $\lambda=0$}
\end{defn}

\begin{ex}
  Let $B$ be a Borel subgroup in $G=\PSL_2$. A $(\PSL_2,B)$-Higgs bundle $(E,F,\phi)$ is a $\PSL_2$-Higgs bundle $(E,\phi)$ with a reduction of structure group to $B$.

  Consider the rank 2 vector bundle associated to the standard representation (which we will just keep denoting by $E$) with $\det(E) \simeq \calO_\Sigma$. The $B$-reduction yields a short exact sequence
  \begin{align*}
    0 \to L_2 \to E \to L_1 \to 0
  \end{align*}
  where $L_1$ and $L_2$ are line bundles. The transversality condition on the second fundamental form of the Higgs field implies that $\mathrm{SFF}(\phi): L_2 \xrightarrow{\sim} L_1 \otimes K$ is an isomorphism. We deduce that $L_1, L_2$ are line bundles of degrees $\mp (g-1)$ respectively, and a prototypical example is given by the uniformizing bundle $\Theta$ of \eqref{eqn uniformizing bundle}. 
\end{ex} 

Let $S_\rho \subset G$ be the connected subgroup exponentiating $\rho$, and let $B_\rho \subset S_\rho$ be the Borel subgroup containing $\exp{(e)}$. We recall the following theorem which can be found in \cite[Section~5.4]{Collier-Sanders}. 

\begin{thm}[Collier--Sanders]
  \label{thm:Theta-Slodowy-equiv-cat}
  Let $\rho$ be an even $\mathfrak{sl}_2$ triple. Fixing the uniformizing $(S_\rho,B_\rho)$-Higgs bundle $\Theta = (E_0, F_0, \phi_0)$ as in \eqref{eqn uniformizing bundle}, there is a $\Theta$-Slodowy functor
  \begin{align*}
    &\mathrm{Slo}_{\Theta}: \calB_\rho \longrightarrow \mathrm{Op}^0(G,P_\rho)\\
    &(E,\phi,\set{\psi_{n_j}}) \longmapsto \\
    & \quad \left(\mathrm{Ind}_{C_\rho \times S_\rho}^G(E \times E_0), \mathrm{Ind}_{C_\rho \times S_\rho}^{P_\rho}(E \times E_0), \phi+ \phi_0 + \sum_{j} \psi_{n_j}\right), 
  \end{align*}
  which is an equivalence of categories when $S_\rho = \mathrm{PSL}_2$ and essentially surjective and full when $S_\rho = \mathrm{SL}_2$.  
\end{thm}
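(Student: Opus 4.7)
The plan is to construct an inverse to $\mathrm{Slo}_\Theta$ (up to isomorphism) and verify compatibility on morphisms, using the Jacobson--Morozov decomposition of $\mathfrak{g}$ under $\rho(h)$ together with the $L_\rho$-equivariant geometry of the Slodowy slice $\mathcal{S}_\rho = f + \mathfrak{g}_e$. First, for well-definedness of $\mathrm{Slo}_\Theta$, the decomposition \eqref{eq:g-highest-weight-line-decomposition} shows that the Higgs field $\phi + \phi_0 + \sum_j \psi_{n_j}$ has image in $\mathfrak{g}/\mathfrak{p}_\rho$ given solely by the $\mathfrak{g}_{-2}$-component of $\phi_0$, which in a local trivialization of $F_0$ coming from \eqref{eqn uniformizing bundle} is precisely $f$. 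Since $f$ lies in the open dense $P_\rho$-orbit $O$, the transversality condition of Definition \ref{defn (G,P)-Higgs} is satisfied fiberwise.

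For essential surjectivity, given $(E, F, \phi) \in \mathrm{Op}^0(G, P_\rho)$, pass to the Levi quotient $F_L = F/U_\rho$, which is an $L_\rho$-bundle. Since $L_\rho$ is an almost-direct product $C_\rho \cdot S_\rho$, pulling back along $C_\rho \times S_\rho \to L_\rho$ yields a $C_\rho$-bundle $F_C$ and an $S_\rho$-bundle $F_S$, with an ambiguity by the shared finite center in the $\mathrm{SL}_2$ case. The transversality of the second fundamental form, interpreted through the transverse slice property of $\mathcal{S}_\rho$, shows that the lowest-weight component of $\phi$ provides a nowhere-vanishing section identifying $F_S$ with $E_0$ up to the action of the center of $S_\rho$. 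Once this identification is made, the decomposition \eqref{eq:g-highest-weight-line-decomposition} together with the resulting $K^{1/2}$-twist lets one read off the polydifferentials $\psi_{n_j}$ from the highest-weight components of $\phi$ and the $C_\rho$-Higgs field from the weight-zero part.

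For fullness, any morphism of $(G, P_\rho)$-Higgs bundles preserves the $P_\rho$-reduction and so descends to an $L_\rho$-equivariant map on Levi quotients, which in turn splits into morphisms of the $C_\rho$- and $S_\rho$-factors; the transversality condition forces the $S_\rho$-component to be an automorphism of $\Theta$, which is trivial in the $\mathrm{PSL}_2$ case. The main obstacle is canonicalizing the identification $F_S \simeq E_0$ in essential surjectivity: one must translate the transversality of the second fundamental form into a rigidifying condition on the $S_\rho$-reduction, and it is precisely the residual $\{\pm 1\}$-center inside $\mathrm{SL}_2$ that obstructs faithfulness in that case, since it acts nontrivially on the $\psi_{n_j}$ with odd $n_j/2 + 1$ parity while preserving all the $(G,P_\rho)$-Higgs data downstairs.
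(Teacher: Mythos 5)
There is a genuine gap in your essential-surjectivity argument, and it occurs at the step you treat as routine. First, the claim that $L_\rho$ is an almost-direct product $C_\rho \cdot S_\rho$ is false: $S_\rho \not\subseteq L_\rho$ (its generators $e,f$ live in $\g_{\pm 2}$), and even $C_\rho \cdot (S_\rho \cap L_\rho)$ is a proper subgroup of $L_\rho$ in general --- for the principal $\mathfrak{sl}_2$ in $\mathfrak{sl}_3$ one has $L_\rho = T$ two-dimensional while $C_\rho$ is finite and $S_\rho \cap L_\rho$ is a one-dimensional torus. So there is no map $C_\rho \times S_\rho \to L_\rho$ to pull back along, and the splitting of $F_L$ into $F_C \times F_S$ does not exist a priori. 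The reduction to $C_\rho$ is not group-theoretic: it is produced by the transversality condition itself, since $\mathrm{SFF}_F(\phi)$ is a section of $O_K$ with $O \simeq L_\rho/C_\rho$ (because $\mathrm{Stab}_{L_\rho}(f) = C_\rho$), and it is the $\Gm$-grading by $\exp(\rho(h))$ applied to the $K$-twist that manufactures $E_0$. Second, and more seriously, you never reconstruct the unipotent direction. The decomposition \eqref{eq:g-highest-weight-line-decomposition} is only $C_\rho$-equivariant, not $C_\rho U_\rho$-equivariant, so ``the highest-weight components of $\phi$'' are not well-defined for a Higgs field on a $C_\rho U_\rho$-bundle until $\phi$ has been brought into Slodowy normal form by a $U_\rho$-gauge transformation; the existence and uniqueness of that transformation is exactly the transverse-slice isomorphism $U_\rho \times \mathcal{S}_\rho \xrightarrow{\sim} f + \mathfrak{u}_{\rho,+}^\perp$, which your argument never invokes for this purpose (you only cite the slice property to identify $F_S$ with $E_0$, which is a different use). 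Without this step you also never show that the $P_\rho$-bundle $F$ itself, rather than just its Levi quotient, is induced from $C_\rho \times S_\rho$. Your faithfulness remark is also off: for even $\rho$ the element $\exp(\pi i \rho(h))$ acts on $V_{n_j}$ by $(-1)^{n_j} = +1$, so it fixes every $\psi_{n_j}$; the failure of faithfulness comes from the kernel $\mu_2$ of $C_\rho \times S_\rho \to G$, not from any action on the polydifferentials.

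For comparison, the treatment in this paper (Remark \ref{rem reprove CS equivalence} together with the proof of Theorem \ref{thm CS moduli stack}) sidesteps all of these local trivialization issues by performing the two key identifications once, at the level of quotient stacks: $(O + \mathfrak{u}_\rho^\perp)\times^{P_\rho} G \simeq (f+\mathfrak{u}_\rho^\perp)\times^{C_\rho U_\rho} G$ using $\mathrm{Stab}_{L_\rho}(f) = C_\rho$, and then $\simeq \mathcal{S}_\rho \times^{C_\rho} G$ using the Slodowy transversal slice. Taking $K^{1/2}$-twisted sections of $[\mathcal{S}_\rho/C_\rho]$ with its $\Ggr$-weights then produces the category $\calB_\rho$ directly. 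If you want to keep your hands-on inverse-functor strategy, you must insert exactly those two inputs where your argument currently asserts a nonexistent group splitting and an undefined weight decomposition.
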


\begin{rem}
    The functor $\mathrm{Slo}_\Theta$ is furthermore $\CC^\times$-equivariant with respect to the action defined by \eqref{eq:Cstar-action-on-Slodowy} and usual scaling of the Higgs field. 
\end{rem}

\begin{rem} \label{rem reprove CS equivalence}
    Let $\rho$ be an even $\mathfrak{sl}_2$ triple, and $P_\rho = L_\rho U_\rho$ be the Levi decomposition of the parabolic subgroup defined by $\rho$. The category of $(G,P_\rho)$-Higgs bundles can be represented by the following mapping stack:
$$
    \mathrm{Higgs}_{(G,P_\rho)} := \mathrm{Sect}(\Sigma, [(O+\mathfrak{u}_\rho^\perp) \times^{L_\rho \times U_\rho} G/G]_K).
$$
Indeed, this stack of sections parametrizes a principal $G$-bundle along with a $P_\rho$-reduction, whose Higgs field relative to the reduction lands in the orbit $O$. Being an orbit, $O$ can be written as a homogeneous space for $L_\rho$. We can fix the base point $f$ and write 
$$
    [(O+\mathfrak{u}_\rho^\perp) \times^{P_\rho} G \simeq (f+\mathfrak{u}_\rho^\perp) \times^{C_\rho \times U_\rho} G.
$$
since $\mathrm{Stab}_{L_\rho}(f) = C_\rho$. On the other hand, the fundamental property of the Slodowy slice tells us that $f+\mathfrak{u}_\rho^\perp$ has a transversal slice through $U_\rho$-orbits given by $\mathcal{S}_\rho = f+ \g_{e}$. So we have an isomorphism 
$$
    \mathrm{Higgs}_{(G,P_\rho)} \simeq \mathrm{Sect}(\Sigma, [(f+\mathfrak{u}_\rho^\perp) \times^{C_\rho U_\rho} G/G]_K) \simeq \mathrm{Sect}(\Sigma, [\mathcal{S}_\rho \times^{C_\rho} G/G]_K). 
$$
It will become clear with the proof of Theorem \ref{thm CS moduli stack} that this equivalence recovers Collier--Sanders' Theorem \ref{thm:Theta-Slodowy-equiv-cat} upon passing to $\CC$-points.
\end{rem}

\subsection{Collier--Sanders moduli stack}

In this section we give an immediate application of our constructions: namely, we provide moduli stacks representing the $L$-twisted (and odd $\rho$) generalizations of Collier--Sanders' categories, we interpret the Slodowy functor as a representable morphism of moduli stacks, and observe that in the $L = K$ case the Slodowy functor can be regarded as a Lagrangian correspondence.

We start by building the relevant Hamiltonian action. Given an even $\mathfrak{sl}_2$ triple $\rho$, consider the shifted Hamiltonian reduction 
$$
    M_{\mathrm{CS}} = T^*_f(U \backslash G) = \mathrm{WInd}_{1,\rho}^G(0)
$$
where $U$ is the unipotent subgroup of the associated parabolic of $\rho$ as in Section \ref{subsubsect JM data}. We view $M_{\mathrm{CS}}$ as a graded Hamiltonian $C_\rho \times G$-space via right $G$-action and left-inverse $C_\rho$-action, with grading given by its description as a Whittaker induction. 

\begin{ex} \label{ex principal slice is Hitchin section}
    Consider the simplest case of $G = \mathrm{SL}_2$ with $\rho$ the principal $\mathfrak{sl}_2$-triple defined by
    $$
        \rho(f) = \begin{bmatrix}
            0&0\\1&0\end{bmatrix}.
    $$
    By the explicit presentation of \eqref{eqn: Whittaker induction formula}, we can write
    $$
        M_{\mathrm{CS}} = \begin{bmatrix} 0&\ast\\1&0\end{bmatrix} \times G \simeq \mathbf{A}^1 \times G
    $$
    with $\Ggr$ acting by weight 4 on the $\mathbf{A}^1$ coordinate and by $\mathrm{exp}(\rho)$ on the $G$-coordinate. Considering $L^{1/2}$-twisted sections, we obtain
    \begin{align*}
        \mathrm{Lag}^L(M_{\mathrm{CS}}) &= \mathrm{Sect}(\Sigma, [(\mathbf{A}^1 \times G)_{L^{1/2}}/G])\\
        &= \left\{(E,s_1,s_2): \begin{matrix}E \text{ is a principal }G\text{-bundle, and}\\ s_1 \in R\Gamma(\Sigma, L^2), \, s_2 \in \mathrm{Sect}(\Sigma, \mathrm{Isom}(E, E_0))\end{matrix}\right\}
    \end{align*}
where $E_0$ is the induced bundle $(L^{1/2} \oplus L^{-1/2}) \times^{\Gm} G$. In particular, the existence of a section $s_2$ implies that $E \simeq E_0$ and we may write 
$$
    \mathrm{Lag}^L(M_{\mathrm{CS}}) \simeq R\Gamma(\Sigma, L^2) \times \{E_0\} \times B\mu_2,
$$
with the trivial $\mu_2$-gerbe structure coming from the center of $G$. When $L = K$ and the genus $g(\Sigma)$ of the curve is at least 2, using the fact that $K^2$ has no higher cohomology we further simplify
$$
    \mathrm{Lag}(M_{\mathrm{CS}}) \simeq H^0(\Sigma, K^2) \times \{E_0\} \times B\mu_2
$$
and the global moment map $\mathrm{Lag}(M_{\mathrm{CS}}) \to \mathrm{Higgs}_G$ (written without the $B\mu_2$-factor) can be identified with the Hitchin section:
$$
    H^0(\Sigma, K^2) \times \{E_0\} \longrightarrow \mathrm{Higgs}_G
$$
$$\psi \longmapsto \left(E_0,  \phi_0 + \psi = \begin{bmatrix}
    0& \psi\\ 1 & 0
\end{bmatrix}\right).$$ 

More generally, when $G$ is a simple group and $\rho$ is a principal $\mathfrak{sl}_2$-triple, the Lagrangian $\mathrm{Lag}(M_{\mathrm{CS}}) \to \mathrm{Higgs}_G$ can be identified with the Hitchin section (multiplied by the trivial $Z_G$-gerbe) by an identical calculation. 
\end{ex}

Expanding upon the calculations of the preceding example, we have the following
\begin{thm} \label{thm CS moduli stack}
    Let $\Theta$ be the uniformizing bundle as in \eqref{eqn uniformizing bundle}, and let $\mathrm{Lag}^L(M_{\mathrm{CS}})$ be Gaiotto's Lagrangian associated to the graded Hamiltonian $C_\rho \times G$-space $M_{\mathrm{CS}}$. Then there is a diagram
    \begin{equation}
        \begin{tikzcd}
	& {\mathrm{Lag}^L(M_{\mathrm{CS}})} \\
	{\mathrm{Higgs}_{C_\rho}^L} & {\mathrm{Higgs}_{C_\rho \times G}^L} & {\mathrm{Higgs}_G^L}
	\arrow["{\mathrm{forget}}"', from=1-2, to=2-1]
	\arrow["{\mu_{\mathrm{CS}}}"', from=1-2, to=2-2]
	\arrow["{\mathrm{Slo}_\Theta}", from=1-2, to=2-3]
	\arrow["{p_1}"', from=2-2, to=2-1]
	\arrow["{p_2}", from=2-2, to=2-3]
\end{tikzcd}
    \end{equation}
with the following properties:
\begin{itemize}
    \item The Slodowy functor $\mathrm{Slo}_\Theta$ is a representable morphism of algebraic stacks. 
    \item When $L = K$, the global moment map $\mu_{\mathrm{CS}}$ is a Lagrangian correspondence between $\mathrm{Higgs}_{C_\rho}$ and $\mathrm{Higgs}_G$.
    \item When $L = K$ and $\rho$ is even, the morphism $\mathrm{Slo}_{\Theta}$ at the level of $\CC$-points recovers the Slodowy functor of Collier--Sanders $\mathrm{Slo}_\Theta: \mathcal{B}_\rho \to \mathrm{Higgs}_G(\CC)$. 
\end{itemize}
\end{thm}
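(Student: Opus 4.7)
The plan is to reduce everything to a concrete moduli description of $\mathrm{Lag}^L(M_{\mathrm{CS}})$ and read off each of the three claims from this description. Starting with the Whittaker induction formula \eqref{eqn: Whittaker induction formula} for $(H,S,\rho) = (1,0,\rho)$, we have $M_{\mathrm{CS}} \cong \mathcal{S}_\rho \times G$ (after translating by $f$), with $G$ acting freely on the second factor, $C_\rho$ acting by the adjoint action on $\mathcal{S}_\rho$ (and by left multiplication on $G$), and $\Ggr$ acting with weight $k+2$ on the $\rho(h)$-weight-$k$ summand of $\g_e$. Quotienting by the free $G$-action gives $[M_{\mathrm{CS}}/G] \cong \mathcal{S}_\rho$ as a $C_\rho$-stack, and decomposing $\g_e = \mathfrak{c} \oplus \bigoplus_j V_{n_j}$ as in \eqref{eq:g-highest-weight-line-decomposition}, the $L^{1/2}$-twisted section stack unpacks as
\begin{equation*}
\mathrm{Lag}^L(M_{\mathrm{CS}}) \cong \bigl\{\,(F,\phi,\{\psi_{n_j}\}) : F \in \mathrm{Bun}_{C_\rho},\ \phi \in H^0(\Sigma,\mathfrak{c}_F \otimes L),\ \psi_{n_j} \in H^0(\Sigma,(V_{n_j})_F \otimes L^{n_j/2+1})\,\bigr\}.
\end{equation*}
When $L=K$ and $\rho$ is even, this is manifestly the Slodowy category $\mathcal{B}_\rho$ of Definition \ref{defn Slodowy category}. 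The algebraicity of this stack follows because the Slodowy slice is affine and $C_\rho$ is reductive.

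Next, I would identify $\mathrm{Slo}_\Theta$ as the global $G$-moment map attached to the Hamiltonian action, thereby verifying the third bullet. The $G$-moment map on $M_{\mathrm{CS}}$ sends $(s,g) \mapsto \mathrm{Ad}_{g^{-1}}(s)$, and the $\Ggr$-action on the $G$-factor is via the cocharacter $\rho(h):\Gm \to G$. After the $L^{1/2}$-twist, this twist data precisely assembles into the uniformizing $G$-bundle $E_0 = L^{1/2} \times^{\Gm,\rho(h)} G$, i.e.\ the induction along $S_\rho \hookrightarrow G$ of the uniformizing $S_\rho$-bundle \eqref{eqn uniformizing bundle}. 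Combining with the $C_\rho$-bundle $F$ and tracking the basepoint $f \in \mathcal{S}_\rho$ (which becomes $\phi_0$), the global moment map sends $(F,\phi,\{\psi_{n_j}\})$ to $\bigl(\mathrm{Ind}_{C_\rho \times S_\rho}^G(F \times E_0),\ \phi_0 + \phi + \sum_j \psi_{n_j}\bigr)$, reproducing verbatim the functor of Theorem \ref{thm:Theta-Slodowy-equiv-cat}.

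For representability, I would factor $\mathrm{Slo}_\Theta$ through the 0-oper stack of Definition \ref{defn (G,P)-Higgs}: the map $\mathrm{Higgs}_{(G,P_\rho)}^L \to \mathrm{Higgs}_G^L$ is representable since a $(G,P_\rho)$-structure is a parabolic reduction (a section of a flag bundle, representable) satisfying the open transversality condition, while the map $\mathrm{Lag}^L(M_{\mathrm{CS}}) \to \mathrm{Higgs}_{(G,P_\rho)}^L$ is identified via Remark \ref{rem reprove CS equivalence} using the Slodowy slice transversality. The Lagrangian correspondence claim for $L = K$ is then an immediate application of the main theorem of \cite{Ginzburg-Rozenblyum} to the joint $(C_\rho \times G)$-Hamiltonian action $M_{\mathrm{CS}}$: it produces a Lagrangian morphism to $\mathrm{Higgs}_{C_\rho \times G} = \mathrm{Higgs}_{C_\rho} \times \mathrm{Higgs}_G$, which is exactly the structure of a Lagrangian correspondence (up to the convention-dependent sign on one moment map factor). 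The main technical obstacle will be the careful bookkeeping of the two coupled $\Ggr$-actions — the graded weights on $\mathcal{S}_\rho$ versus the cocharacter $\rho(h)$ on the $G$-factor — in order to produce both the correct line bundle powers $L^{n_j/2+1}$ and the uniformizing bundle $E_0$ under the single $L^{1/2}$-twist; for odd $\rho$ an additional subtlety is that the functor at the level of categories is only essentially surjective and full (per Theorem \ref{thm:Theta-Slodowy-equiv-cat}), which, while immaterial for representability of stacks, requires care when matching $\mathbb{C}$-points with \cite{Collier-Sanders}.
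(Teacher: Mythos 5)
Your proposal is correct and follows essentially the same route as the paper's proof: you unpack $\mathrm{Lag}^L(M_{\mathrm{CS}})$ via the Slodowy-slice presentation $[M_{\mathrm{CS}}/C_\rho\times G]\simeq[\mathfrak{c}\oplus\bigoplus_j V_{n_j}/C_\rho]$ with $\Ggr$-weights $2$ and $2+n_j$ to recover $\mathcal{B}_\rho$, identify the $G$-direction moment map with $\mathrm{Slo}_\Theta$ by observing that the $\exp(\rho)$-twist of the $G$-factor assembles the uniformizing bundle $E_0$ and the basepoint $f$ becomes $\phi_0$ (exactly the paper's appeal to Example \ref{ex principal slice is Hitchin section}), and invoke \cite{Ginzburg-Rozenblyum} for the Lagrangian correspondence.

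The one place you diverge is representability: the paper disposes of it in one line by noting that $\mu_{\mathrm{CS}}$ (and hence $\mathrm{Slo}_\Theta$) is a base change of a mapping-stack morphism induced by the representable morphism $[M_{\mathrm{CS}}/C_\rho\times G]\to[\g/G]$ (Lemma \ref{lem Lagrangian is representable}; concretely, the fiber over the atlas $\g$ is the scheme $\mathcal{S}_\rho\times^{C_\rho}G$). You instead factor through $\mathrm{Higgs}^L_{(G,P_\rho)}$ and use that parabolic reductions are sections of a flag bundle subject to a locally closed transversality condition. This works, but note that the identification $\mathrm{Lag}^L(M_{\mathrm{CS}})\simeq\mathrm{Higgs}^L_{(G,P_\rho)}$ of Remark \ref{rem reprove CS equivalence} is stated only for even $\rho$; the odd case requires the modified orbit $O$ (the $P$-orbit of $f+\mathfrak{u}_{-1}$) that the paper only introduces after the theorem, whereas the paper's base-change argument is insensitive to parity. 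Either way the claim holds; the paper's route is shorter and uniform, yours makes the link to $0$-opers explicit.
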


\begin{proof}
    The second bullet point is an immediate consequence of the main theorem of \cite{Ginzburg-Rozenblyum}, with the first bullet point following from the definition of $\mu_{\mathrm{CS}}$ as the base change of a representable morphism (see Lemma \ref{lem Lagrangian is representable}). Continuing to verify the third point, note that by the Slodowy slice construction we can write
    $$
        M_{\mathrm{CS}} \simeq \left(\mathfrak{c} \oplus \bigoplus_{j=1}^N \, V_{n_j}\right) \times G
    $$
    with $C_\rho$ acting on the parenthesized vector space component and on $G$ by left multiplication; by convention the $n_j$'s are nonzero. The stacky quotient by $C_\rho \times G$ yields 
    \begin{equation}
    \label{eqn equivalent_stacks}[M_{\mathrm{CS}}/C_\rho \times G] = \bigg[\mathfrak{c} \oplus \bigoplus_{j=1}^N \, V_{n_j}\bigg/C_\rho\bigg]
    \end{equation}
    with $\Ggr$-grading of weight $2$ on $\mathfrak{c}$, and weight $2+n_j$ on $V_{n_j}$. Twisting by $L^{1/2}$ to the power of these weights, we see that $\mathrm{Lag}^L(M_{\mathrm{CS}})$ classifies tuples $(F, \phi, \{\psi_{n_j}\})$ where $F$ is a $C_\rho$-bundle on $\Sigma$, with $\phi$ a section of 
   $$
        \phi \in \mathrm{Sect}(\Sigma, \mathfrak{c} \times^{C_\rho \times \Ggr} FL^{1/2}) = \mathrm{Sect}(\Sigma, \mathrm{ad}(F) \otimes L)
    $$
    i.e., an $L$-twisted Higgs field on $F$, and $\psi_{n_j}$ are sections
    $$
        \psi_{n_j} \in \mathrm{Sect}(\Sigma, V_{n_j} \times^{C_\rho \times \Ggr} FL^{1/2}) = \mathrm{Sect}(\Sigma, (V_{n_j})_{F} \otimes L^{n_j/2+1}).
    $$
    This is exactly the Collier--Sanders Slodowy category $\mathcal{B}_\rho$ generalized to arbitrary line bundles $L$ and with possibly odd $n_j$'s, and the forgetful map takes $(F,\phi, \{\psi_{n_j}\}) \mapsto (F,\phi) \in \mathrm{Higgs}_{C_\rho}^L$.

    On the other hand, we consider the global moment map in the $G$-direction. Arguing as in Example \ref{ex principal slice is Hitchin section} using the left hand side of \eqref{eqn equivalent_stacks}, we see that $\mathrm{Lag}^L(M_\mathrm{CS})$ equivalently classifies tuples 
    $$
       \left(E,F, \iota, \phi+\phi_0+\sum_j \psi_{n_j}\right) 
    $$
    where $E$ is a $G$-bundle, $(F,\phi)$ is an $L$-twisted $C_\rho$-Higgs bundle, we have an isomorphism $\iota:E \simeq \mathrm{Ind}_{C_\rho \times S_\rho}^G(F \times E_0)$, and the $\psi_{n_j}$'s are sections of $(V_{n_j})_F \otimes L^{n_j/2+1}$ as before. The composition $p_2 \circ \mu_{\mathrm{CS}}$ maps the tuple above to $$\left(E, \phi+\phi_0+\sum_j \psi_{n_j}\right) \in \mathrm{Higgs}_G^L
    $$
    and is naturally isomorphic to $\mathrm{Slo}_\Theta$.
\end{proof} 

\begin{rem}
    Suppose $\rho$ is even. In the case when $S_\rho \simeq \mathrm{SL}_2$, Collier--Sanders' Slodowy functor (Theorem \ref{thm:Theta-Slodowy-equiv-cat}) fails to be faithful only because the morphism of groups $C_\rho \times S_\rho \to G$ is non-injective: it has the center $\mu_2$ as kernel. In our setup, $\mathrm{Lag}(M_{\mathrm{CS}})$ already has the extra $\mu_2$-stabilizer incorporated.
\end{rem}

Interpreting the functor $\mathrm{Slo}_\Theta$ concretely, we may treat the cases of $\rho$ even and $\rho$ odd on an equal footing. We propose the following definitions, generalizing those of Collier--Sanders' Definitions \ref{defn Slodowy category} and \ref{defn (G,P)-Higgs}.
\begin{defn}
    Let $\rho: \mathfrak{sl}_2 \to \g$ be an $\mathfrak{sl}_2$-triple. Let $P = P_\rho$ be the associated parabolic subgroup, $U = U_\rho$ its unipotent radical, and $C = C_\rho$ the centralizer of $\rho$ (as described in Section \ref{subsubsect JM data}. 
    \begin{itemize}
        \item The $\rho$-Slodowy category $\mathcal{B}_\rho$ has as objects $(F,\phi, \{\psi_{n_j}\})$ where $(F,\phi)$ is a $C$-Higgs bundle, and $\psi_{n_j} \in H^0(\Sigma, (V_{n_j})_F \otimes K^{n_j/2+1})$, with the same morphisms as in Definition \ref{defn Slodowy category}. Note that since $n_j$ is allowed to be odd, this definition requires the choice of $K^{1/2}$.
        \item The category $\mathrm{Op}^0(G,P)$ of $(G,P)$-Higgs bundles has as objects $(E,F,\phi)$ where $(E,\phi)$ is a $G$-Higgs bundle, $E = F \times^P G$ is a reduction of structure group to $P$, and the second fundamental form $\mathrm{SFF}_F(\phi)$ of $\phi$ relative to $F$ satisfies
        $$\mathrm{SFF}_F(\phi) \subset H^0(\Sigma, O_K)$$
        where $O$ is the $P$-orbit of $f+\mathfrak{u}_{-1} \subset \mathfrak{g}/\mathfrak{p}$. Note that in the odd case, we have $[f, \mathfrak{u}] \subsetneq \mathfrak{p}$ since $f \in \g_{-2}$ and thus $\mathrm{ad}_f: \g_1 \to \g_{-1} \not\subseteq \mathfrak{p}$. 
    \end{itemize}
\end{defn}

We obtain as an immediate corollary of the preceding Theorem \ref{thm CS moduli stack} and Remark \ref{rem reprove CS equivalence} that Collier--Sanders' Theorem \ref{thm:Theta-Slodowy-equiv-cat} can be generalized \textit{verbatim} to the case of odd $\rho$, using the above definitions.

\subsection{$G^\R$-Higgs bundles}

Another setting in which our constructions conveniently recover interesting moduli spaces related to $\mathrm{Higgs}_G$ is the consideration of $G^\RR$-Higgs bundles, which we review below. 

Let $G^\R$ be a connected semisimple real Lie group, with $\sigma: \frakg^\R \to \frakg^\R$ its Cartan involution on the Lie algebra. Let
\[ \frakg^\R = \frakh^\R \oplus \frakm^\R \]
be the $\pm 1$-eigenspace decomposition with respect to the Cartan involution. The Cartan involution extends to the group level:
\[ G^\R = H^\R \exp(\frakm^\R). \]
with $H^\RR$ being the maximal compact subgroup of $G^\RR$. Moreover, the adjoint action of $G^\R$ restricts to an isotropy action of $H^\R$ on $\frakm^\R$. 

We use a missing superscript $\R$ to mean that the object is complexified: in particular, we consider the complex reductive subgroup $H \subset G$ and $\mathfrak{m} = \mathfrak{m}^\R \otimes \C$ as an $H$-representation.

\begin{definition}
  \label{defn:G-R-Higgs-bundles}
  An \textit{$L$-twisted $G^\R$-Higgs bundle} on $\Sigma$ is a pair $(E,\phi)$, where $E$ is a holomorphic $H$-bundle $\Sigma$ and a $\phi$ is a section of $\frakm_E \otimes L$ which we call the \textit{Higgs field}. 
\end{definition}

A morphism of $G^\R$-Higgs bundles is a complex gauge transformation of $H$-bundles that preserves the Higgs field. From the work of \cite{Garcia-PradaEtAl2012, Schmitt2008}, we know that for $L = K$, the set of polystable $G^\R$-Higgs bundles up to isomorphism forms a complex analytic moduli space. 

By considering the cotangent bundle to a the homogeneous space $H \backslash G$, we obtain an algebraic moduli stack $\mathrm{Higgs}_{G^\RR}^L$ of $L$-twisted $G^\RR$-Higgs bundles, giving a natural notion of algebraic universal families of $G^\RR$-Higgs bundles. 

\begin{thm} \label{thm GR Higgs bundles}
    Let $M = T^*(H \backslash G)$. Then the $\CC$-points of the algebraic stack $\mathrm{Higgs}_{G^\R}^L := \mathrm{Lag}^L(M)$ is the category of $L$-twisted $G^\R$-Higgs bundles in the sense of \cite{Garcia-PradaEtAl2012, Schmitt2008}, and the global moment map $\mu_M: \mathrm{Lag}(M) \to \mathrm{Higgs}_G$ sending a $G^\R$-Higgs bundle to its induced $G$-Higgs bundle is a Lagrangian morphism when $L = K$.
\end{thm}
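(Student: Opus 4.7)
The plan is to compute the stacky quotient $[M/G]$ explicitly, recognize the resulting mapping stack as the moduli problem for $L$-twisted $G^\RR$-Higgs bundles, and then invoke the Ginzburg--Rozenblyum Lagrangian theorem already recalled in Section \ref{sect BAA branes}.

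First I would identify the underlying Hamiltonian $G$-space. Using the Killing form to identify $\g \simeq \g^*$, the Cartan decomposition $\g = \mathfrak{h} \oplus \mathfrak{m}$ is orthogonal, so the annihilator $\mathfrak{h}^\perp \subset \g^*$ is canonically identified with $\mathfrak{m}$. The standard description of the cotangent bundle of a homogeneous space then gives
\begin{equation*}
    M = T^*(H \backslash G) \simeq \mathfrak{m} \times^H G,
\end{equation*}
which realizes $M$ as a graded Hamiltonian $G$-space of weight $2$ (the $\Ggr$-action scaling the cotangent fibers $\mathfrak{m}$ with weight $2$), with moment map $(\xi, g) \mapsto \mathrm{Ad}_{g^{-1}}(\xi)$. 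Equivalently, this is the graded Hamiltonian action $\mathrm{WInd}_{H,0}^G(0)$ obtained from the BZSV triple $(H, 0, 0)$.

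Next I would compute Gaiotto's Lagrangian. The stacky quotient is
\begin{equation*}
    [M/G] \simeq [\mathfrak{m}/H],
\end{equation*}
and the $L^{1/2}$-twist (using that $\mathfrak{m}$ carries $\Ggr$-weight $2$, so the twist is by $(L^{1/2})^{\otimes 2} = L$) gives
\begin{equation*}
    [M_{L^{1/2}}/G] \simeq [\mathfrak{m}_L/H].
\end{equation*}
Therefore
\begin{equation*}
    \mathrm{Lag}^L(M) = \mathrm{Sect}(\Sigma, [\mathfrak{m}_L/H]),
\end{equation*}
whose $\CC$-points are precisely pairs $(E, \phi)$ consisting of a principal $H$-bundle $E$ on $\Sigma$ and a section $\phi \in H^0(\Sigma, \mathfrak{m}_E \otimes L)$, matching Definition \ref{defn:G-R-Higgs-bundles} on the nose. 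Tracing through the moment map $\mu: M \to \g^*$ under taking sections shows that $\mu_M$ sends $(E, \phi)$ to $(\mathrm{Ind}_H^G E, \phi)$, where $\phi$ is reinterpreted along $\mathfrak{m} \hookrightarrow \g$, which is the standard induced $G$-Higgs bundle.

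For the final claim, when $L = K$ the theorem of \cite{Ginzburg-Rozenblyum} stated in Section \ref{sect BAA branes} applies verbatim to the graded Hamiltonian $G$-space $M$ of weight $2$, and produces the Lagrangian structure on $\mu_M: \mathrm{Lag}(M) \to \mathrm{Higgs}_G$. I do not expect any serious obstacles beyond bookkeeping; the only points demanding care are the Killing-form identification $\mathfrak{h}^\perp \simeq \mathfrak{m}$ and the weight-$2$ compatibility needed so that the $L^{1/2}$-twisting lands in the expected $L$-twisted Higgs category, which is where the choice of $K^{1/2}$ in the moduli construction is absorbed.
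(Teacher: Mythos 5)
Your proposal is correct and follows essentially the same route as the paper: identify $[M_{L^{1/2}}/G] \simeq [\mathfrak{m}_L/H]$, read off the $\CC$-points as $L$-twisted $G^\RR$-Higgs bundles, and invoke Ginzburg--Rozenblyum for the Lagrangian structure when $L = K$. The extra bookkeeping you supply (the Killing-form identification $\mathfrak{h}^\perp \simeq \mathfrak{m}$ and the weight-$2$ twisting) is implicit in the paper's shorter argument.
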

\begin{proof}
    By definition of $M$, we have
    $$
        [M_{L^{1/2}}/G] = [\mathfrak{m}_L/H]
    $$
    Thus, Gaiotto's Lagrangian $\mathrm{Lag}^L(M)$ classifies $H$-bundles $E$ with an $L$-valued Higgs field $\phi$ on $G \times^HE$ which lies in $\mathfrak{m}_L \subset \g_L$. The global moment map is simply
    $$
        \mu_M: \mathrm{Lag}^L(M) \ni (E, \phi) \longmapsto (E \times^H G, \phi) \in \mathrm{Higgs}_G^L,
    $$
    and the Lagrangianity of $\mu_M$ is ensured by \cite{Ginzburg-Rozenblyum}.
\end{proof}

Note that, while the general definition of $\mathrm{Lag}^L$ requires the choice of a square root $L^{1/2}$ of $L$, in the setting of $L$-twisted $G^\RR$-Higgs bundles such a choice is not necessary. 

\begin{rem}
    One can without problem consider when $H$ is an arbitrary reductive subgroup of $G$, but the stackyness and derived structures that are necessary for the global moment map to be Lagrangian may become unwieldy, especially if one's goal is to deduce statements about the moduli space of stable Higgs bundles. For an extreme case, consider $H = \{1\} \subset G$ being the trivial subgroup. Gaiotto's Lagrangian $\mathrm{Lag}(M = T^*G)$ has an underlying classical moduli stack parametrizing Higgs fields on the trivial $G$-bundle, admitting thus a smooth atlas by $H^0(\Sigma, \g \otimes K) \simeq H^0(\Sigma, K)^{\oplus \mathrm{dim}(G)}$. However, as a derived stack we have $\mathrm{Lag}(M) \simeq R\Gamma(\Sigma, K)^{\oplus \mathrm{dim}(G)}$; this is the cotangent fiber to $\mathrm{Higgs}_G$ at the trivial $G$-bundle, which is never classical. 
\end{rem}

\subsubsection{} \label{subsubsec:reality-check}

We explain briefly the origin of the terminology of $G^\RR$-Higgs bundles, a reference of which can be found in \cite{Hitchin1992}. In short, a Kobayashi--Hitchin-type theorem of García-Prada--Gothen--Mundet \cite{Garcia-PradaEtAl2012} states that there is a homeomorphism between the moduli space of stable $G^\R$-Higgs bundles and the moduli space of infinitesimally irreducible $G^\R$-valued representations of $\pi_1(\Sigma)$. In other words, starting from a stable $G^\RR$-Higgs bundle one may solve the associated Hitchin equation for $G$, and observe that the corresponding flat $G$-connection has holonomy valued in $G^\R$.

To this end, let $(E, \phi)$ be a stable $G^\R$-Higgs bundle (so that $E$ is a principal $H$-bundle), and let $F = \mathrm{Ind}_H^G(E)$ be the $G$-bundle induced from $E$. Having a reduction of structure group to $H$ induces a global holomorphic involution $\sigma: F \to F$ of $G$-bundles with $E = F^\sigma$ as fixed points, and there is an isomorphism
\begin{align}
  \label{eqn:complex-Cartan-induced-isomorphism}
  (E, \phi) \simeq \sigma^*(E, \phi).
\end{align}
By stability of $(E, \phi)$, there is a unique reduction of structure group $E = E^{\R} \times^{H^\R} H$ to the maximal compact subgroup $H^\R \subset H$ that solves the Kobayashi--Hitchin equation \cite[Theorem~2.24]{Garcia-PradaEtAl2012}:
\begin{align}
  \label{eqn:Kobayashi-Hitchin}
  \Omega_{D_A} - [\phi, \rho^*\phi] = 0, 
\end{align}
where $\rho: E \to E$ is the antiholomorphic involution of $H$-bundles whose fixed points define the principal $H^\R$-bundle $E^\R$, $D_A$ is the Chern connection on $E$ with respect to $E^{\R}$ and $\Omega_{D_A}$ is the curvature form of $D_A$. Since the Chern connection is an $H^\R$-connection, we have
\begin{align}
  \label{eqn:connection-form-fixed-by-rho}
  \rho^*A = A. 
\end{align}

Let $\tau := \rho \circ \sigma = \sigma \circ \rho$ be the antiholomorphic involution on $G$ whose fixed points define the real form $G^\RR$, so that $F^\tau$ is a principal $G^\R$-bundle. We may apply the Kobayashi--Hitchin theorem \cite{Hitchin1987, Simpson1988} to $(F,\phi)$ viewed as a stable $G$-Higgs bundle, then \eqref{eqn:complex-Cartan-induced-isomorphism} and the uniqueness of solution to \eqref{eqn:Kobayashi-Hitchin} implies that
\begin{align}
  \label{eqn:connection-form-fixed-by-sigma}
  \sigma^*A = A.
\end{align}

Consider the $G$-connection $\nabla := D_A + \phi - \rho^*\phi$ on $F$. Then \eqref{eqn:Kobayashi-Hitchin} implies that $(F,\nabla)$ is a flat $G$-connection, and we can calculate
\begin{align*}
  \tau^*(\nabla) &= \sigma^*\rho^*(A + \phi - \rho^*\phi) \\
                 &= A + \rho^*\sigma^*\phi - \sigma^*\phi \text{ by } \eqref{eqn:connection-form-fixed-by-rho}, \eqref{eqn:connection-form-fixed-by-sigma} \\
                 &= A - \rho^*\phi + \phi \text{ by } \eqref{eqn:complex-Cartan-induced-isomorphism} \\
                 &= \nabla,
\end{align*}
hence the holonomy of $\nabla$ takes values in $G^\R$.

\section{Cayley morphism} \label{sect Cayley morphism}

We are ready to explain our core observation in this article, that the Cayley correspondence is induced from a morphism of Hamiltonian actions (or, following Remarks \ref{rem GL nonsense 1}, \ref{rem AKSZ}, and \ref{rem GL nonsense 2}, a 2-morphism in the category $\mathfrak{B}_{\mathrm{GL}}$). In Sections \ref{subsect magical triples} and \ref{subsect Cayley paper summary}, for the reader's convenience we recall the notion of magical $\mathfrak{sl}_2$-triples and the original Cayley correspondence as constructed and studied in \cite{Cayley}. Given a magical $\mathfrak{sl}_2$-triple, we define two Hamiltonian actions in Section \ref{subsect two Hamiltonian actions} and a morphism between them which leads, in Section \ref{subsect Cayley morphism} to a morphism of Lagrangians over the Hitchin moduli stack, extending the Cayley correspondence. We demonstrate the versatility and efficiency of our perspective by recovering those basic geometric properties obtained by \cite{Cayley} at the level of moduli stacks, which, combined with a study of the stacky and derived structure of Gaiotto's Lagrangians in Appendix \ref{appendix}, recover the main statements of \textit{op. cit} in Section \ref{subsect good moduli spaces}.

\subsection{Magical $\mathfrak{sl}_2$-triples} \label{subsect magical triples}

Given an $\mathfrak{sl}_2$-triple $\rho:\mathfrak{sl}_2 \to \mathfrak{g}$, we define a vector space involution $\sigma_\rho:\mathfrak{g}\to \mathfrak{g}$ acting as $\pm 1$ on summands of the Jacobson--Morozov decomposition \eqref{eq:g-highest-weight-line-decomposition} of $\mathfrak{g}$ by the following rule:
$$
    \sigma_\rho(x)=\begin{cases}x, & x\in \mathfrak{c} \\ (-1)^{k+1}x, & x\in (\mathrm{ad}_f)^k(V_{n_j})\end{cases}.
$$

\begin{definition}
An $\mathfrak{sl}_2$-triple of $\mathfrak{g}$ is called \textit{magical} if $\sigma_\rho$ is a Lie algebra involution of $\mathfrak{g}$. If $\rho$ is magical, the real form $\mathfrak{g}^\R$ associated to $\sigma_\rho$ is called the \textit{canonical real form}. 
\end{definition}

\begin{definition}
  \label{def:cayley-real-form}
  Given a magical $\mathfrak{sl}_2$-triple, the \textit{Cayley real form} $\mathfrak{g}_\mathrm{Cay}^\R$ is defined to be the real form associated to the Lie algebra involution $\theta_\rho:\mathfrak{g}_0 \to \mathfrak{g}_0$,
\begin{align*}
    \theta_\rho(x)=
    \begin{cases}
    x, \quad \quad &x\in \mathfrak{c}, \\
    -x, \quad & \text{otherwise}.
    \end{cases}
\end{align*} 
\end{definition}
We warn the reader that the Cayley real form is not a reductive Lie algebra in general, as its complexification can contain factors of $\mathrm{Lie}(\mathbf{G}_a)$.

\begin{ex}
\label{principal sl2}By the classification of magical triples, all principal $\mathfrak{sl}_2$-triples are magical.
Consider the principal-$\mathfrak{sl}_2$ in $\mathfrak{g}=\mathfrak{sl}_3\C$, whose Jacobson--Morozov decomposition can be presented diagrammatically as follows, where we label the $(\pm 1)$-eigenspaces of $\sigma_\rho$ (in black) and $\theta_\rho$ (in red):

\begin{center}
\begingroup
\renewcommand{\arraystretch}{2} 
\newcommand{\bigplus}{\raisebox{0.7ex}{\scalebox{1.1}{$\scriptstyle\boldsymbol{+}$}}}
\newcommand{\bigminus}{\raisebox{0.7ex}{\scalebox{1.1}{$\scriptstyle\boldsymbol{-}$}}}
\newcommand{\subplus}{\raisebox{-0.7ex}{\scalebox{1.1}{$\scriptstyle\boldsymbol{+}$}}}
\newcommand{\subminus}{\raisebox{-0.7ex}{\scalebox{1.1}{$\scriptstyle\boldsymbol{-}$}}}
\newcommand{\spc}{\hspace{0.1em}} 

\begin{tabular}{c@{\hspace{12pt}}c@{\hspace{12pt}}c@{\hspace{12pt}}c@{\hspace{12pt}}c@{\hspace{12pt}}c}
 $W_4$ & $\bullet\spc^{\bigminus}$ & $\bullet\spc^{\bigplus}$ & $\bullet\spc^{\bigminus}_{\color{red}{\subminus}}$ & $\bullet\spc^{\bigplus}$ & $\bullet\spc^{\bigminus}$ \\ 
 $W_2$ &  & $\bullet\spc^{\bigminus}$ & $\bullet\spc^{\bigplus}_{\color{red}{\subminus}}$ & $\bullet\spc^{\bigminus}$ & \\  
  & $\mathfrak{g}_{-4}$ & $\mathfrak{g}_{-2}$ & $\mathfrak{g}_{0}$ & $\mathfrak{g}_{2}$ & $\mathfrak{g}_{4}$    
\end{tabular}
\endgroup
\end{center}
The canonical real form $\mathfrak{g}^\R$ must have a three-dimensional maximal compact subalgebra since
$$\dim \mathfrak{g}^{\sigma_\rho}=\dim \mathfrak{h}^\R=3.$$ 
Now $\mathfrak{sl}_3$ has precisely three real forms: $\mathfrak{su}_3$, $\mathfrak{su}(1,2)$, $\mathfrak{sl}_3\R$, and their maximal compact subalgebras all have different dimensions.
Therefore, $\mathfrak{g}^\R$ must be the split real form $\mathfrak{sl}_3\R$, whose maximal compact subalgebra $\mathfrak{so}_3\R$ is three-dimensional. Meanwhile, the $(+1)$-eigenspace of $\theta_\rho$ is trivial, hence the maximal compact subalgebra $\mathfrak{c}^\R$ of $\mathfrak{g}_\mathrm{Cay}^\R$ is trivial and the Cayley real form $\mathfrak{g}_\mathrm{Cay}^\R=\R^2$ is a real form of $\mathfrak{g}_0=\C^2$.
\end{ex}

The classification of magical triples are presented using weighted Dynkin diagrams, which is a complete invariant of $\mathfrak{sl}_2$-triples of simple complex Lie algebras \cite[Theorem 3.5.4]{CollingwoodMcGovern2017}. The weight of each node is given by $\alpha(h)$, where $\alpha$ is the corresponding simple root. One observes from the classification list that the weighted Dynkin diagram of any magical $\mathfrak{sl}_2$-triples consists only of weights 0 and 2. Thus, every magical $\mathfrak{sl}_2$-triple is even, that is, $\mathrm{ad}_h$ has only even eigenvalues.

\begin{ex}
    The weighted Dynkin diagram associated to a principal $\mathfrak{sl}_2$ has each simple root labeled with $\alpha(h)=2$. In particular, the principal-$\mathfrak{sl}_2$ in $\mathfrak{sl}_3\C$ discussed in Example \ref{principal sl2} has weighted Dynkin diagram $A_2: \,\,\dynkin[labels*={2,2}] A2$.
\end{ex}

\begin{rem}
    The evenness of magical triples provides another simplification: while the general definition of $\mathrm{Lag}^L$ requires a choice of square root $L^{1/2}$ of $L$, the Lagrangians associated to even triples do not end up depending on this choice. 
\end{rem}

Let $\frakg = \frakh \oplus \frakm$ be the $(\pm 1)$-eigenspace decomposition for the magical involution $\sigma_\rho$, where $\mathfrak{h}$ is the $(+1)$-eigenspace and $\mathfrak{m}$ is the $(-1)$-eigenspace. Directly from the definition of $\sigma_\rho$, we get decompositions of the $(\pm 1)$-eigenspaces in terms of highest weight lines (cf. Equation \eqref{eq:g-highest-weight-line-decomposition}): 
\begin{align*}
  \frakh = \frakc \oplus \bigoplus_{j=1}^{N} \, \bigoplus_{k=1}^{m_j} \, \ad_f^{2k-1} \cdot V_{2m_j}
\end{align*}
and 
\begin{align*}
  \frakm = \bigoplus_{j=1}^{N} \, \bigoplus_{k=0}^{m_j} \, \ad_f^{2k} \cdot V_{2m_j}. 
\end{align*}
Applying $\mathrm{ad}_f$ to these decompositions, we see that $\ad_f(\frakm) = \bigoplus_{j} \bigoplus_{k=1}^{m_j} \ad_f^{2k-1} \cdot V_{2m_j}$ and $\ad_f^2(\frakm) = \bigoplus_{j} \bigoplus_{k=1}^{m_j} \ad_f^{2k} \cdot V_{2m_j}$. Writing $\mathfrak{v} := \oplus_j V_{2m_j}$, the decompositions above can be expressed as
\begin{align}
  \label{eq:h-ad-f-decomposition}
  \frakh = \frakc \oplus \ad_f(\frakm)
\end{align}
and
\begin{align}
  \label{eq:m-ad-f-decomposition}
  \frakm = \mathfrak{v} \oplus \ad_f^2(\frakm). 
\end{align}
Most importantly, we have an obvious isomorphism
\begin{align}
  \label{eq:ad-f-isomorphism}
  \ad_f: \ad_f(\frakm) \xrightarrow{\sim} \ad_f^2(\frakm). 
\end{align}

\subsection{The general Cayley correspondence of \cite{Cayley}} \label{subsect Cayley paper summary}
We recall here the statement of the general Cayley correspondence on the level of moduli spaces following \textit{op. cit}. We first review the construction of a certain subgroup $\tilde{G}^\R\subset G^\R$ and its Lie algebra $\tilde{\mathfrak{g}}^\R$ depending on a magical triple $\rho: \mathfrak{sl}_2 \to \g$. 

Note that $\g_0 \subset \g$, the 0 weight space of $\rho(h)$, is a reductive Lie subalgebra. We may thus consider its semisimple part
$$
    \tilde{\g} := \text{ semisimple subalgebra of } \g_0, 
$$
and write the decomposition
$$\mathfrak{g}_0= \tilde{\mathfrak{g}} \oplus \C^{r(\rho)}.$$ 
Then we define $\mathfrak{g}_{\mathrm{Cay}}^\R$ to be the following real form of $\mathfrak{g}_0$:
\begin{equation}
  \label{eq:cayley-real-form-abelian-semisimple}
\mathfrak{g}_\mathrm{Cay}^\R =  \tilde{\mathfrak{g}}^\R \oplus \R^{r(\rho)},
\end{equation}
where $\tilde{\mathfrak{g}}^\R$ is the real form of $\tilde{\mathfrak{g}}$ given by the restriction of the involution $\theta_\rho$. We define $\tilde{G}^\R$ to be the subgroup of $G$ with Lie algebra $\tilde{\g}^\R$.

\begin{thm}[General Cayley correspondence, \cite{Cayley}]  \label{thm GCC}
Let $\mathcal{M}_{L}(G^\R)$ denote the moduli space of stable $L$-twisted $G^\R$-Higgs bundles for a line bundle $L$ and a real Lie group $G^\R$. Given a magical $\mathfrak{sl}_2$-triple of $G$, there is an injective, open and closed map:
\begin{align*}
        \Psi_\rho:\, \mathcal{M}_{K^{m_c+1}}(\tilde{G}^\R)\times\prod_{j=1}^{r(\rho)}\mathcal{M}_{K^{l_j+1}}(\R^+) \,\,&\longrightarrow \,\,\mathcal{M}(G^\R).
    \end{align*}
where $m_c$ is the unique repeated weight in the $\rho$-decomposition of $\mathfrak{g}$ and $l_j$ are the rest of the weights $n_j$ (see Lemma 5.7 of \textit{op. cit} for details).
\end{thm}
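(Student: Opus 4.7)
My plan would be to reprove Theorem \ref{thm GCC} conceptually via the (BAA)-brane framework developed in Sections \ref{sect BAA branes}--\ref{sect Slodowy and homogeneous}, rather than through the direct gauge-theoretic arguments of \cite{Cayley}. The first step is to identify both source and target as $\CC$-points of certain Gaiotto Lagrangians. The target $\mathcal{M}(G^\R)$ is the classical truncation of $\mathrm{Lag}(T^*(H\backslash G))$ by Theorem \ref{thm GR Higgs bundles}. For the source, I would build a BZSV triple $(C_\rho \times \tilde{G}^\R, S_\rho, 0)$-style datum whose Whittaker induction gives a graded Hamiltonian $G$-space $M_1$, with $L$-twisted Lagrangian $\mathrm{Lag}^L(M_1)$ whose $\CC$-points recover $\mathcal{M}_{K^{m_c+1}}(\tilde{G}^\R)\times\prod_{j=1}^{r(\rho)}\mathcal{M}_{K^{l_j+1}}(\R^+)$, by applying the Slodowy-slice analysis of Theorem \ref{thm CS moduli stack} to the centralizer direction and reading off the weight data $m_c, l_j$ from the Jacobson--Morozov decomposition (this is where the decompositions \eqref{eq:h-ad-f-decomposition} and \eqref{eq:m-ad-f-decomposition} are used to separate the $\tilde{G}^\R$-Higgs factor from the abelian poly-differential factors).

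Next, I would construct the Cayley morphism itself. The key geometric input is the $C_\rho$-equivariant isomorphism \eqref{eq:ad-f-isomorphism}
\[
  \ad_f: \ad_f(\frakm) \xrightarrow{\sim} \ad_f^2(\frakm),
\]
which, combined with the decompositions \eqref{eq:h-ad-f-decomposition}--\eqref{eq:m-ad-f-decomposition}, allows one to assemble a $(C_\rho \times G \times \Ggr)$-equivariant map of graded Hamiltonian $G$-spaces $\phi: M_1 \to M_2 := T^*(H\backslash G)$ intertwining the moment maps (this is essentially the content of the forthcoming Theorem \ref{thm main 1} cited in the introduction). The point is that the extra poly-differential factors in $M_1$ are absorbed into the $\mathfrak{m}$-valued Higgs field of $M_2$ through the Slodowy-slice model of $M_1$, using $\ad_f$ to land the $V_{2m_j}$-components into $\ad_f(V_{2m_j})\subset \frakm$ and appropriate Cayley shifts $+\phi_0 + f$ coming from the uniformizing bundle $\Theta$ of \eqref{eqn uniformizing bundle}.

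Applying the $L^{1/2}$-section functor in the manner of \eqref{eqn functoriality of Lag}, one then obtains a morphism of Lagrangians
\[
  \mathrm{Lag}(\phi): \mathrm{Lag}^L(M_1) \longrightarrow \mathrm{Lag}^L(M_2) = \mathrm{Higgs}_{G^\R}^L
\]
over $\mathrm{Higgs}_G^L$. Passing to $\CC$-points and then to coarse moduli spaces yields a map $\Psi_\rho$ between the moduli spaces in the statement. For the three topological properties, I would argue as follows. Injectivity follows from the fact that the underlying morphism $\phi$ of Hamiltonian $G$-spaces is a locally closed embedding (the Slodowy slice is cut out by explicit transversality in $\frakm$ via $\ad_f$), and then from a stable-point analysis showing that distinct Cayley data yield non-isomorphic polystable $G^\R$-Higgs bundles. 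Openness is a consequence of the fact that $\mathrm{Lag}(\phi)$ is Lagrangian over $\mathrm{Higgs}_G$ between Lagrangians of the same dimension, hence an étale morphism on smooth loci; combined with injectivity this upgrades to an open embedding on moduli spaces. Closedness of the image reduces to the statement that the Cayley locus is preserved under the limits in the Harder--Narasimhan/Jordan--Hölder sense, which follows because the conditions defining the image are closed conditions on the Higgs field being $\sigma_\rho$-real and having the prescribed transversality pattern.

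The main obstacle I anticipate is the passage from the stack-level statement (Lagrangian morphism of derived Artin stacks) to the moduli-space-level statement (injective, open, closed map of analytic spaces). The stacks $\mathrm{Lag}^L(M_i)$ carry non-trivial derived and stacky structure coming from the automorphisms and higher obstructions to deformations of $G^\R$-Higgs bundles, and one must verify that the derived/stacky discrepancies on source and target match exactly under $\mathrm{Lag}(\phi)$ so that the induced map descends to an embedding on classical coarse moduli. This is precisely what the analysis in Appendix \ref{appendix} is set up to handle, so I would defer the delicate dimension and automorphism-group comparisons to that appendix and apply them here as a black box.
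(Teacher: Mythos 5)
First, a point of framing: the paper does not actually prove Theorem \ref{thm GCC} where it is stated --- it is quoted from \cite{Cayley}, and the surrounding text only recalls the construction. What you propose is, in outline, the independent rederivation that the paper itself carries out in Section \ref{sect Cayley morphism} and Appendix \ref{appendix}: realize both sides as Gaiotto Lagrangians (Theorems \ref{thm CS moduli stack} and \ref{thm GR Higgs bundles}), exhibit the Cayley correspondence as $\mathrm{Lag}(\phi)$ for a morphism $\phi$ of graded Hamiltonian spaces (Theorem \ref{thm main 1}), prove an equivalence of tangent complexes and universal closedness (Theorems \ref{thm main 2} and \ref{thm main 3}), and descend to coarse moduli via Corollary \ref{cor classicality of Cayley morphism}. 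So your strategy matches the paper's; the problems are in how you fill in three of the key steps.

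Concretely: (a) the morphism $\phi: M'_\rho = (f+\oplus_j V_{2m_j})\times^C G \to M_\rho = \mathfrak{m}\times^H G$ is just the inclusion of the Slodowy slice, because $f$ and the highest-weight lines $V_{2m_j} = \ad_f^0\cdot V_{2m_j}$ already lie in $\mathfrak{m}$ by the definition of $\sigma_\rho$ (Proposition \ref{prop morphism of actions}); your plan to ``use $\ad_f$ to land the $V_{2m_j}$-components into $\ad_f(V_{2m_j})$'' would put them into $\mathfrak{h}$ by \eqref{eq:h-ad-f-decomposition}, which is not where a $G^\R$-Higgs field lives. The isomorphism \eqref{eq:ad-f-isomorphism} enters only later. (b) Your openness argument rests on the principle that a morphism of Lagrangians of the same dimension over a symplectic stack is \'etale; this is false in general, and in any case the equidimensionality you invoke is only available after the derived structure is shown to vanish. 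The paper instead proves directly that $d\phi_{/G}$ is a quasi-isomorphism of two-term complexes (Theorem \ref{thm main 2}), and it is precisely there that \eqref{eq:ad-f-isomorphism} cancels the extra block $\ad_f(\mathfrak{m})\to\ad_f^2(\mathfrak{m})$. (c) Closedness is not a matter of ``closed conditions on the Higgs field'': the transversality condition defining the Cayley image (the second fundamental form landing in the open orbit $O$) is an open condition, and the stack-level morphism is not even separated since $C\subsetneq H$. The actual argument (Theorem \ref{thm main 3}, via Lemmas \ref{lem maps into closed immersion} and \ref{lem reduction of structure group}) is a valuative criterion: factor $\phi$ through a closed immersion followed by the change of structure group $C\to H$, and prove universal closedness of $\mathrm{Bun}_C\to\mathrm{Bun}_H$ by a semistable-reduction argument using Drinfeld--Simpson, purity, and Steinberg's theorem to control $\pi_0(C)$. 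Your deferral of the derived/stacky bookkeeping to Appendix \ref{appendix} is exactly what the paper does, but as written the openness and closedness steps are gaps rather than proofs.
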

In Table \ref{tab:representations_C} of the appendix, we tabulate a concrete description of the moduli space $\mathcal{M}_{K^{m_c+1}}(\tilde{G}^\R)$: it is the moduli space of stable pairs $(E,\phi)$ where $E$ is a principal $C$-bundle and $\phi$ is a $\mathfrak{v}_0$-valued $K^{m_c+1}$-twisted Higgs field. In fact, applying the principle of Theorem \ref{thm GR Higgs bundles} we may equivalently define 
\begin{equation} \label{eq gtilde is c plus v0}
    \tilde{\g} = \left(\mathfrak{c} = \tilde{\g}^{\theta_\rho = 1}\right) \oplus \left(\mathfrak{v}_0 = \tilde{\g}^{\theta_\rho=-1}\right).
\end{equation}

\subsection{Two Hamiltonian $G$-spaces} \label{subsect two Hamiltonian actions}

Suppose we are given a magical triple $\rho: \mathfrak{sl}_2 \to \g$ (recall that magical triples are even, so we use the convention $n_j = 2m_j$ as in Section \ref{subsubsect JM data}). We will use $\rho$ to construct two closely related Hamiltonian $G$-spaces:
\begin{itemize}
    \item Consider the real form $G^\RR_\rho$ defined by $\rho$, and let $H^\RR \subset G^\RR_\rho$ be its maximal compact subgroup. Let $H \subset G$ be its complexification, and we define 
    $$
    M_\rho := \mathrm{WInd}_H^G(\mathrm{pt}) \simeq \mathfrak{m} \times^H G$$
    with $\Ggr$ scaling the cotangent fiber (i.e, $\mathfrak{m}$) by weight 2, and moment map
   $$
        \mathfrak{m} \times^H G \ni (X,g) \longmapsto \mu(X,g) := g^{-1}Xg
    $$
    \item Consider the centralizer subgroup $C_\rho \subset G$ of $\rho$ and the intersection $C := C_\rho \cap H$. Let $U \subset G$ be the unipotent subgroup associated to $\rho$ as in Section \ref{subsubsect JM data}. Viewing $f \in \mathfrak{u}^*$, we consider the Whittaker induction 
    $$
        M'_\rho := \mathrm{WInd}_{C,\rho}^G(\mathrm{pt})\simeq \big(f+  \oplus_j \, V_{2m_j}\big) \times^{C} G.
   $$
   Recall that the $\Ggr$-action is defined by weight $2+2m_j$ on $V_{2m_j}$ and left multiplication by $\mathrm{exp}(\rho)$ on $G$. 
\end{itemize}

We saw in Theorem \ref{thm GR Higgs bundles} that $\mathrm{Lag}(M_\rho)$ is the moduli stack of $G^\RR_\rho$-Higgs bundles when $L = K$. As an immediate consequence of Theorem \ref{thm CS moduli stack}, the moduli interpretation of $\mathrm{Lag}^L(M'_\rho)$ is clear as well: we simply set the $C$-Higgs field to 0. 
\begin{prop}
    The $\CC$-points of $\mathrm{Lag}^L(M'_\rho)$ classify tuples $(F, \{\psi_{2m_j}\})$ where 
    \begin{itemize}
        \item $F$ is a $C$-bundle (whose induction to $C_\rho$ we denote by $F' = \mathrm{Ind}_C^{C_\rho}(F)$), and
        \item $\psi_{2m_j}$ are sections of $H^0(\Sigma, (V_{2m_j})_{F'} \otimes L^{m_j+1})$ as in Collier--Sanders' Slodowy category. 
    \end{itemize}
\end{prop}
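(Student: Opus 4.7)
My plan is to deduce this proposition as a direct specialization of the setup behind Theorem \ref{thm CS moduli stack}, unpacking what Whittaker induction from $C$ (rather than from the trivial group) produces. First I would write $M'_\rho$ using the general Whittaker induction formula \eqref{eqn: Whittaker induction formula}, namely $M'_\rho = (\mathfrak{m} \cap \mathfrak{g}_e) \times^C G$, where $\mathfrak{m} \subset \mathfrak{g}^* \simeq \mathfrak{g}$ is the annihilator of $\mathfrak{c}$ under the Killing form. The point is then to identify this annihilator inside $\mathfrak{g}_e$ explicitly.

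The key computation is $\mathfrak{c}^{\perp} \cap \mathfrak{g}_e = \bigoplus_j V_{2m_j}$. This will follow from two observations: (i) the decomposition $\mathfrak{g} = \mathfrak{c} \oplus \bigoplus_j W_{2m_j}$ is orthogonal with respect to the Killing form, since $\mathfrak{sl}_2$ acts trivially on $\mathfrak{c}$ and with nontrivial weight on each $W_{2m_j}$; (ii) $\mathfrak{g}_e = \mathfrak{c} \oplus \bigoplus_j V_{2m_j}$ from the Jacobson--Morozov decomposition of Section \ref{subsubsect JM data}. Since each $V_{2m_j}$ lies in the $2m_j > 0$ weight space and is therefore $\mathfrak{c}$-orthogonal, we conclude $[M'_\rho/G] = [\bigoplus_j V_{2m_j}/C]$, where $C$ acts on each $V_{2m_j}$ through the restriction of the $C_\rho$-action on the highest-weight line.

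Next I would apply the $L^{1/2}$-twisted mapping stack definition of $\mathrm{Lag}^L$ to this quotient, tracking $\Ggr$-weights as in the proof of Theorem \ref{thm CS moduli stack}. The $\Ggr$-action on $V_{2m_j}$ is the one induced by its position inside $\mathfrak{g}_e$, which carries weight $2 + 2m_j$ by the bullet points following \eqref{eqn: Whittaker induction formula}. Twisting by $L^{1/2}$ raised to that weight produces a factor of $L^{m_j+1}$, so $\mathrm{Lag}^L(M'_\rho)$ classifies a $C$-bundle $F$ together with sections
\begin{equation*}
\psi_{2m_j} \in H^0\bigl(\Sigma,\, V_{2m_j} \times^C F \otimes L^{m_j+1}\bigr).
\end{equation*}
Finally, since $V_{2m_j}$ is a $C_\rho$-representation whose $C$-structure is just its restriction, the associated bundle satisfies $V_{2m_j} \times^C F = V_{2m_j} \times^{C_\rho} F'$ with $F' = \mathrm{Ind}_C^{C_\rho}(F)$, recovering exactly the Slodowy-type sections $(V_{2m_j})_{F'} \otimes L^{m_j+1}$ appearing in the statement.

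I do not anticipate a genuine obstacle here: the argument is essentially a rerun of the third bullet of Theorem \ref{thm CS moduli stack}, with the $\mathfrak{c}$-summand (the $C$-Higgs field) removed because $H = C$ rather than $H = 1$ in the Whittaker induction. The only mildly delicate point is the Killing-form bookkeeping that identifies $\mathfrak{c}^\perp \cap \mathfrak{g}_e$, which is standard but should be stated cleanly in order to justify dropping the $\mathfrak{c}$-factor cleanly from the Collier--Sanders presentation.
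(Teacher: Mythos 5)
Your argument is correct and is essentially the route the paper takes: the paper treats this proposition as an immediate consequence of the computation in Theorem \ref{thm CS moduli stack} (``set the $C$-Higgs field to $0$''), which is exactly the rerun you carry out — identifying $[M'_\rho/G]$ with $[\oplus_j V_{2m_j}/C]$ via the Whittaker induction formula and the orthogonality $\mathfrak{c}^\perp\cap\g_e=\oplus_j V_{2m_j}$, then matching $\Ggr$-weights $2+2m_j$ with twists by $L^{m_j+1}$. No gaps.
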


\begin{rem}
    Note that $M'_\rho$ is obtained from $M_{\mathrm{CS}}$ by symplectic reduction by $C$; this can be viewed as a $C$-equivariant Lagrangian correspondence
   $$ \begin{tikzcd}
	\mathcal{L} := \bigg({\mathrm{pt}} & {0 \times_{\mathfrak{c}^*} M_{\mathrm{CS}}} & {M'_\rho}
	\arrow[from=1-2, to=1-1]
	\arrow[from=1-2, to=1-3]
\end{tikzcd}\bigg)$$
Under the functor $A_{\mathrm{Dol}}$ discussed in Remarks \ref{rem GL nonsense 1} and \ref{rem GL nonsense 2}, we see that $A_{\mathrm{Dol}}(\mathcal{L})$ represents a morphism of Lagrangians, sending $(F,\{\psi_{2m_j}\}) \in \mathrm{Lag}(M'_\rho)$ to $(F, 0, \{\psi_{2m_j}\}) \in \mathrm{Lag}(M_{\mathrm{CS}})$. The image consists of $C_\rho$-Higgs bundles with vanishing Higgs field ($K$-twisted), whose underlying bundles have structure group reduced to $C$. These are precisely the conditions in \cite[Lemma~5.6]{Cayley} which need to be satisfied for the Cayley correspondence to produce $G^\RR_\rho$-Higgs bundles.
\end{rem} 

The key computation realizing the Cayley correspondence as a morphism of Lagrangians in the sense of Section \ref{subsect 2functoriality} is encapsulated in the following 
\begin{prop} \label{prop morphism of actions}
    Consider the morphism 
    \begin{equation} \label{eqn map before quotient}
        \phi: (f +\oplus_{j} \, V_{2m_j}) \times G \longrightarrow \mathfrak{m} \times G
      \end{equation}
    defined by inclusion in the first coordinate and identity on the $G$-coordinate. Then $\phi$ descends to a well-defined morphism
    $$
        \phi: M'_\rho \longrightarrow M_\rho
    $$
    which is a $G \times \Ggr$-equivariant morphism of Hamiltonian actions.
\end{prop}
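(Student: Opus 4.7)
The plan is to verify, in turn, the conditions in the definition of a morphism of graded Hamiltonian $G$-spaces: $(1)$ that the ``pre-quotient'' map in \eqref{eqn map before quotient} actually lands in $\mathfrak{m} \times G$, $(2)$ that it is $C$-equivariant so that it descends to a map of quotients, $(3)$ that the descended $\phi$ is $G$- and $\Ggr$-equivariant, $(4)$ that the moment maps agree, and $(5)$ that $\phi$ is symplectic.

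For $(1)$, I will apply the parity formula $\sigma_\rho|_{\mathrm{ad}_f^k(V_{n_j})} = (-1)^{k+1}$ used to define $\sigma_\rho$. Each highest weight line $V_{2m_j}$ corresponds to $k=0$, giving sign $-1$, so $V_{2m_j} \subset \mathfrak{m}$; and $f = \rho(f)$ sits in $\mathrm{ad}_f^2(V_2)$ (with $k=2$) so that $f \in \mathfrak{m}$ as well. Hence $f + \bigoplus_j V_{2m_j} \subset \mathfrak{m}$. For $(2)$, since $C = C_\rho \cap H$ centralizes $\rho$, it fixes $f$ and preserves each highest-weight line $V_{2m_j}$, and the $C$-action on $\mathfrak{m}$ is just the restriction of the $H$-action along $C \hookrightarrow H$.

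For $(3)$, $G$-equivariance is tautological on the $G$-factor. The key observation for $\Ggr$-equivariance is that $\rho(h) \in \mathfrak{h}$: indeed $\rho(h) \in \mathrm{ad}_f(V_2)$ (the $k=1$ summand), which has sign $(-1)^{1+1} = +1$ under $\sigma_\rho$. Therefore $\exp(\log t \cdot \rho(h)) \in H$, and in the quotient $\mathfrak{m} \times^H G$ one may absorb left multiplication of this element on $G$ by an inverse adjoint action on $\mathfrak{m}$. A weight check then shows that on $V_{2m_j}$ one has
\begin{equation*}
  (t^{2+2m_j} v,\; e^{\log t \cdot \rho(h)} g) \;\sim_H\; (t^{-2m_j}\cdot t^{2+2m_j} v,\; g) \;=\; (t^2 v,\; g),
\end{equation*}
and likewise $(f, e^{\log t \cdot \rho(h)} g) \sim_H (t^2 f, g)$, matching the weight-$2$ scaling on $M_\rho$. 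For $(4)$, both moment maps are the standard adjoint formula $(X,g) \mapsto \mathrm{Ad}_{g^{-1}}(X)$, so commutativity is automatic.

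The main obstacle is $(5)$, symplecticity of $\phi$. My approach is to lift the check to $T^*G$ by viewing both spaces as Hamiltonian reductions: $M_\rho$ as the reduction of $T^*G$ by the left $H$-action at $0 \in \mathfrak{h}^*$, and $M'_\rho$ as the shifted reduction of $T^*G$ by the left $CU$-action at $(0,f|_{\mathfrak{u}}) \in (\mathfrak{c}+\mathfrak{u})^*$. I will construct a $G$-equivariant inclusion $\mu_{CU}^{-1}(0,f|_{\mathfrak{u}}) \hookrightarrow \mu_H^{-1}(0)$ inside $T^*G$ that descends to $\phi$, using the Slodowy-slice presentation \eqref{eqn: Whittaker induction formula} together with the decompositions \eqref{eq:h-ad-f-decomposition}, \eqref{eq:m-ad-f-decomposition} and the isomorphism \eqref{eq:ad-f-isomorphism} coming from the magical condition on $\rho$. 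By $G$-equivariance and homogeneity, it suffices to check symplecticity at the base point $(f, e)$; there both tangent spaces decompose along \eqref{eq:g-highest-weight-line-decomposition}, and the canonical pairings coming from $T^*G$ agree on the image of $\phi$ because $f$ annihilates $\mathfrak{c}$ and the pieces of $\mathfrak{m}$ outside of $\bigoplus_j V_{2m_j}$ are exactly those absorbed into the $H/(CU)$-coset directions via \eqref{eq:ad-f-isomorphism}. This last compatibility is the delicate point and is where the magical property is used in an essential way.
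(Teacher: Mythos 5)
Your items (1)--(4) coincide with the paper's proof, which records only (3) and (4) and leaves (1) and (2) implicit: the key point in both is that $\exp(\rho)(t)$ lies in $H$ and can be moved across the $\times^H$-quotient, where $\mathrm{Ad}_{\exp(\rho)(t)^{-1}}$ rescales $f$ by $t^{2}$ and $V_{2m_j}$ by $t^{-2m_j}$, converting the weight $2+2m_j$ into the weight $2$ of $M_\rho$; and both moment maps are the adjoint action, so the moment-map triangle commutes for free. Your parity bookkeeping ($V_{2m_j}=\mathrm{ad}_f^0(V_{2m_j})\subset\mathfrak{m}$, $f\in\mathrm{ad}_f^2(V_2)\subset\mathfrak{m}$, $\rho(h)\in\mathrm{ad}_f(V_2)\subset\mathfrak{h}$) is correct and worth making explicit, since the paper uses these facts silently.

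The genuine divergence is (5): the paper never verifies symplecticity, even though its definition of a morphism of graded Hamiltonian spaces requires it, so you are right to flag this. Your strategy of lifting to $T^*G$ is the right one, but the step as announced would fail literally: there is no inclusion $\mu_{CU}^{-1}(0,f|_{\mathfrak{u}})\subset\mu_H^{-1}(0)$, because a point $\xi$ of the former is constrained only on $\mathfrak{c}+\mathfrak{u}$ and may have nonzero restriction to $\mathrm{ad}_f(\mathfrak{m})\cap\g_{\geq 0}\subset\mathfrak{h}$ (cf.\ \eqref{eq:h-ad-f-decomposition}). The repair is the common Slodowy slice $(f+\mathfrak{v})\times G$, with $\mathfrak{v}=\oplus_j V_{2m_j}$: it lies in $\mu_{CU}^{-1}(0,f|_{\mathfrak{u}})$ because $\mathfrak{v}\subset\g_{>0}$ is Killing-orthogonal to $\mathfrak{c}+\mathfrak{u}\subset\g_{\geq 0}$, and it lies in $\mu_H^{-1}(0)=\mathfrak{m}\times G$ because $f+\mathfrak{v}\subset\mathfrak{m}$ (your step (1)) and $\mathfrak{m}=\mathfrak{h}^\perp$, as $\sigma_\rho$ is an isometry of the Killing form. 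Since each reduced symplectic form pulls back to the restriction of the canonical form of $T^*G$ on the corresponding level set, the two forms agree on this common slice; and $d\phi$ is represented by the inclusion $\mathfrak{v}\oplus\g\hookrightarrow\mathfrak{m}\oplus\g$ of slice directions, so $\phi^*\omega_{M_\rho}=\omega_{M'_\rho}$ follows at once. Note that the isomorphism \eqref{eq:ad-f-isomorphism} is not what makes $\phi$ symplectic --- it is what makes $\phi$ \'etale, which is the content of Theorem \ref{thm main 2}; for symplecticity you only need $f+\mathfrak{v}\subset\mathfrak{m}$, so your final ``delicate point'' is in fact easier than you anticipate once the level-set inclusion is replaced by the slice.
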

\begin{proof}
    Let $v_j \in V_{2m_j}$ (where, by our convention $m_j > 0$) and $g \in G$, then we must verify the $G \times \Ggr$-equivariance of the formula \eqref{eqn map before quotient}. The $G$-equivariance is clear, so we check, for $t \in \Ggr$, the equation
    \begin{equation} \label{eqn check equivariance}
        t \cdot (f + v_j,g) = (f +t^{2+2m_j}v_j, g) \overset{\phi}{\longmapsto} (t^2 (f + v_j), \exp(\rho)(t)g).
    \end{equation}
    But note that $\exp(\rho)(t)$ is in $H$, so we can rewrite the right hand side of \eqref{defn Gaiotto Lagrangian} as
    $$
        (t^2(f + v_j), \exp(\rho)(t)g) = (t^2\exp(\rho)(f + v_j),g) = (f + t^{2+2m_j}v_j,g)
    $$
    by definition of $V_{2m_j}$, so \eqref{eqn check equivariance} is verified. The fact that $\phi: M'_\rho \to M_\rho$ commutes with moment maps is immediate, since the moment maps are identified with (co)adjoint action. 
\end{proof}

\subsection{Cayley correspondence as a morphism of Lagrangians} \label{subsect Cayley morphism}

In this section we extend the Cayley correspondence of \cite{Cayley} to a morphism of moduli stacks over $L$-twisted Higgs bundles. We also deduce geometric features of this morphism via simple calculations on Hamiltonian actions. 

\begin{thm} \label{thm main 1}
    Let $\rho$ be a magical triple for $G$, defining a real form $G_\rho^\RR$ of $G$. Consider the morphism $\phi: M'_\rho \to M_\rho$ of Hamiltonian actions defined in Proposition \ref{prop morphism of actions}. The induced morphism of Gaiotto's Lagrangians
    $$
        \mathrm{Lag}(\phi): \mathrm{Lag}^L(M'_\rho) \longrightarrow \mathrm{Lag}^L(M_\rho) \simeq \mathrm{Higgs}_{G^\RR_\rho}^L
    $$
    over $\mathrm{Higgs}_G^L$ in the case when $L = K$ extends the Cayley correspondence (in the sense of Equation (5.5) and Lemma 5.6 of \cite{Cayley}) upon evaluation on $\CC$-points. 
\end{thm}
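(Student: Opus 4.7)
The strategy is to compute $\mathrm{Lag}(\phi)$ at $\CC$-points directly, by combining the previously-established moduli-stack interpretations of source and target with the functoriality of the mapping-stack construction (Section \ref{subsect 2functoriality}).

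First, identify source and target. The Proposition immediately preceding the theorem (an immediate variant of Theorem \ref{thm CS moduli stack}) tells us that a $\CC$-point of $\mathrm{Lag}^L(M'_\rho)$ is a tuple $(F, \{\psi_{2m_j}\})$ where $F$ is a principal $C$-bundle and $\psi_{2m_j} \in H^0(\Sigma, (V_{2m_j})_{F'} \otimes L^{m_j+1})$ with $F' = \mathrm{Ind}^{C_\rho}_C(F)$; this is the generalized Collier--Sanders Slodowy category with vanishing $C$-Higgs field. Theorem \ref{thm GR Higgs bundles} identifies the target $\mathrm{Lag}^L(M_\rho)$ with $\mathrm{Higgs}^L_{G^\RR_\rho}$, classifying pairs $(E_H, \Phi)$ with $E_H$ a principal $H$-bundle and $\Phi \in H^0(\Sigma, \mathfrak{m}_{E_H} \otimes L)$.

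Next, compute $\mathrm{Lag}(\phi)$ on $\CC$-points. By functoriality, $\mathrm{Lag}(\phi)$ is induced by the inclusion $[M'_\rho/G] \hookrightarrow [M_\rho/G]$, which via the presentation \eqref{eqn: Whittaker induction formula} unpacks into the natural inclusion $[(\oplus_j V_{2m_j}) \times^{C} G / G] \hookrightarrow [\mathfrak{m} \times^H G/G]$, combined with the inclusion $C \subset H$ on structure groups. Tracking the $\Ggr$-twists through the $L^{1/2}$-twisted section functor (repeating the twist calculation in the proof of Theorem \ref{thm CS moduli stack}) shows that the resulting $H$-bundle $E_H$ is obtained from $F$ by induction along $C\subset H$ together with the $\mathfrak{sl}_2$-twist by $\rho$ arising from the $\Ggr$-action on $G$---precisely the induction that appears in the Slodowy functor $\mathrm{Slo}_\Theta$---and that the resulting Higgs field takes the form
\[
  \Phi \;=\; f + \sum_j \psi_{2m_j},
\]
with $f$ appearing as the canonical ``uniformizing'' contribution from the base point of the Whittaker induction (the analogue of $\phi_0$ in $\mathrm{Slo}_\Theta$), and each $\psi_{2m_j}$ contributing through the highest-weight line $V_{2m_j}$. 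A direct check using the magical property confirms $\Phi \in H^0(\Sigma, \mathfrak{m}_{E_H}\otimes L)$: by definition $\sigma_\rho|_{V_{2m_j}} = -1$, and $f \in \mathrm{ad}_f^2(V_2) \subset \mathfrak{m}$ for the copy of $W_2$ carrying the Jacobson--Morozov triple itself.

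Finally, match with Cayley. The formula $\Phi = f + \sum_j \psi_{2m_j}$, together with the induction-with-magical-twist prescription for $E_H$, coincides with Equation~(5.5) of \cite{Cayley}, and our intrinsic verification that $\Phi$ lies in $\mathfrak{m}_{E_H} \otimes L$ reproduces Lemma~5.6 of \textit{op.\ cit.} Using the $C$-equivariant isomorphism $\mathrm{ad}_f^{m_j}: V_{2m_j} \xrightarrow{\sim} \mathrm{ad}_f^{m_j}(V_{2m_j}) \subset \g_0$ (from \eqref{eq:h-ad-f-decomposition}--\eqref{eq:ad-f-isomorphism}), the collection $\{\psi_{2m_j}\}$ reorganizes into the data of a $\tilde{G}^\RR$-Higgs bundle (from the repeated-weight contributions) together with the abelian $\R^+$-factors (from the remaining weights), reproducing the source of Theorem \ref{thm GCC} up to the expected $\CC^\times$-gerbe discrepancy that is inherent to the stacky formulation.

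The main obstacle is the $\Ggr$-twist computation in step (3): carefully tracking how the base point $f$ of the Whittaker induction manifests, after the $L^{1/2}$-twist, as a canonical section of $\mathfrak{m}_{E_H} \otimes L$, and how the $\Ggr$-action by $\exp(\rho)$ on the $G$-factor produces the correct $\mathfrak{sl}_2$-twisted induction of $E_H$. This is, however, the same mechanism by which $\phi_0$ appears in the Slodowy functor of Collier--Sanders, and the calculation of Theorem~\ref{thm CS moduli stack} transfers essentially verbatim.
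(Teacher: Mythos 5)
Your proposal is correct and follows essentially the same route as the paper's own proof: both identify the $\CC$-points of $\mathrm{Lag}^L(M'_\rho)$ via the Collier--Sanders-type description from Theorem \ref{thm CS moduli stack}, recognize the composite with the global moment map as the Slodowy functor $\mathrm{Slo}_\Theta$ (with the constant $f$ globalizing to $\phi_0$), and use the magical property --- $C\times S_\rho\subset H$ and $f+\mathfrak{v}\subset\mathfrak{m}$ --- to see that the output factors through an $H$-reduction with $\mathfrak{m}$-valued Higgs field, matching Equation (5.5) and Lemma 5.6 of \cite{Cayley}. The closing reorganization into $\tilde{G}^\RR$-Higgs data is extra (it pertains to Theorem \ref{thm GCC} rather than to this statement) but is harmless.
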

\begin{proof}
Recall that $\Theta=(E_0,\phi_0)$ is the uniformizing Higgs bundle from \eqref{eqn uniformizing bundle}. The global moment map of $\mathrm{Lag}^L(M_\rho')$ is given by the formula
$$
    \mu_{M_\rho'}: (F,\{\psi_{2m_j}\}) \longmapsto \left(E := \mathrm{Ind}_{C \times S_\rho}^G(F \times E_0), \phi_0+\sum_j \psi_{2m_j}\right),
$$
which is the Slodowy functor $\mathrm{Slo}_\Theta$ from Theorem \ref{thm:Theta-Slodowy-equiv-cat} restricted to $\mathrm{Lag}(M'_\rho)$. This is precisely Equation (5.5) and Lemma 5.6 of \cite{Cayley}, where we note that since $\rho$ is magical, $C \times S_\rho$ is a subgroup of $H$ (possibly modulo the diagonal $\pm 1$ in the case when $S_\rho \simeq \mathrm{SL}_2$), so $E$ admits naturally an $H$-reduction $E' := \mathrm{Ind}_{C \times S_\rho}^H(F \times E_0)$. Furthermore, since $f + \g_e \subset \mathfrak{m}$, the section $\phi_0 + \sum_j \, \psi_{2m_j}$ is correspondingly contained in $\mathfrak{m}_E \otimes L \subset \g_E \otimes L$. Thus, the global moment map of $\mathrm{Lag}^L(M_\rho')$ factors through $\mathrm{Lag}(\phi)$ via
\begin{align*}
    (F,\{\psi_{2m_j}) &\overset{\mathrm{Lag}(\phi)}{\longmapsto} \left(E', \phi_0+\sum_j \psi_{2m_j} \in H^0(\Sigma,\mathfrak{m}_E \otimes L)\right)\\
    &\overset{\mu_{M_\rho}}{\longmapsto} \left(E, \phi_0+\sum_j \psi_{2m_j} \in H^0(\Sigma,\g_E \otimes L)\right)
\end{align*}
as we wanted to show.
\end{proof}
\begin{rem}
    Strictly speaking, in \cite{Cayley} there was a restriction that the genus of $\Sigma$ is at least 2. This restriction would prevent the moduli stacks from acquiring further derived and stacky structures (see Appendix \ref{appendix}), but from our perspective it is natural to treat curves of all genera at this stage.
\end{rem}

Because of Theorem \ref{thm main 1}, we write
$$
    \mathrm{Cay}_{G,\rho}^L := \mathrm{Lag}^L(M'_\rho)
$$
and refer to it as the \textit{($L$-twisted) Cayley space}. We also refer to $\mathrm{Lag}(\phi)$ as the \textit{($L$-twisted) Cayley morphism}, reserving the term \textit{Cayley correspondence} for the construction studied in \cite{Cayley}.

At the level of moduli spaces, the Cayley correspondence of \textit{op. cit} was shown to be an open and closed map, which allows one to identify special components of the moduli space of stable $G^\RR$-Higgs bundles as Cayley components. In the rest of this section, we show that the natural generalizations of these geometric properties also hold for the Cayley morphism. 

\begin{thm} \label{thm main 2}
    The Cayley morphism $\mathrm{Lag}(\phi): \mathrm{Cay}_{G,\rho}^L \to \mathrm{Higgs}_{G_\rho^\RR}^L$ induces an equivalence on tangent complexes
    $$
        d\mathrm{Lag}(\phi): \mathbf{T}_{\mathrm{Cay}_{G,\rho}^L} \overset{\simeq}{\longrightarrow} \mathrm{Lag}(\phi)^* \, \mathbf{T}_{\mathrm{Higgs}^L_{G_\rho^\RR}}.
    $$
\end{thm}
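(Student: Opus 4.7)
The plan is to reduce the global statement to a pointwise check on a map of quotient stacks, and then invoke Slodowy transversality organized via the magical involution $\sigma_\rho$.

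\textbf{Reduction to a pointwise tangent complex equivalence.} By the mapping stack formula (Section \ref{subsect bundles and sections}), the tangent complex of $\mathrm{Lag}^L(M)$ at a section $s$ is computed as $R\Gamma$ of the pullback of the relative tangent complex of $[M_{L^{1/2}}/G]$ over $\Sigma$, so $d\mathrm{Lag}(\phi)$ is obtained by applying $R\Gamma \circ s^*$ to the map of tangent complexes induced by $\tilde{\phi}: [M'_\rho/G] \to [M_\rho/G]$. Since pullback, $R\Gamma$, and the $L^{1/2}$-twist all preserve quasi-isomorphisms, it suffices to check that $\tilde{\phi}$ induces an equivalence of tangent complexes pointwise. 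Quotienting the free right $G$-action identifies $[M'_\rho/G] \simeq [(f+\mathfrak{v})/C]$ and $[M_\rho/G] \simeq [\mathfrak{m}/H]$, and reduces $\tilde{\phi}$ to the map of stacks induced by the inclusion $\iota: f+\mathfrak{v} \hookrightarrow \mathfrak{m}$ together with $C \hookrightarrow H$.

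\textbf{Setting up the tangent map.} At a point $y \in f+\mathfrak{v}$, the tangent complexes are
\begin{equation*}
\mathbf{T}_{[(f+\mathfrak{v})/C], y} = \bigl[\mathfrak{c} \xrightarrow{\ad_y} \mathfrak{v}\bigr], \qquad \mathbf{T}_{[\mathfrak{m}/H], y} = \bigl[\mathfrak{h} \xrightarrow{\ad_y} \mathfrak{m}\bigr],
\end{equation*}
both in degrees $[-1,0]$. The differential $d\tilde{\phi}$ is given componentwise by the inclusions $\mathfrak{c} \hookrightarrow \mathfrak{h}$ and $\mathfrak{v} \hookrightarrow \mathfrak{m}$. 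These form a well-defined map of complexes because $\ad_y(\mathfrak{c}) \subseteq \mathfrak{v}$: indeed $[\mathfrak{c}, f] = 0$, and $\mathfrak{c}$ preserves $\mathfrak{v}$ since $\mathfrak{c}$ commutes with $\ad_e$ (hence stabilizes $\mathfrak{g}_e = \mathfrak{c} \oplus \mathfrak{v}$) while $\sigma_\rho$ is a Lie algebra involution. Both components of $d\tilde{\phi}$ being injective, the cofiber is quasi-isomorphic to $[\mathfrak{h}/\mathfrak{c} \xrightarrow{\ad_y} \mathfrak{m}/\mathfrak{v}]$, and the proof reduces to showing this induced differential is an isomorphism.

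\textbf{Slodowy transversality in the symmetric pair setting.} Since $y \in f + \mathfrak{v} \subseteq f + \mathfrak{g}_e$, classical Slodowy transversality yields $\mathfrak{g} = \mathfrak{g}_e \oplus [\mathfrak{g}, y]$. We refine this using the $\sigma_\rho$-eigenspace splitting $\mathfrak{g} = \mathfrak{h} \oplus \mathfrak{m}$: $\mathfrak{g}_e = \mathfrak{c} \oplus \mathfrak{v}$ with $\mathfrak{c} \subseteq \mathfrak{h}$ and $\mathfrak{v} \subseteq \mathfrak{m}$, and since $y \in \mathfrak{m}$ the operator $\ad_y$ exchanges $\mathfrak{h} \leftrightarrow \mathfrak{m}$, so $[\mathfrak{g}, y] = [\mathfrak{h}, y] \oplus [\mathfrak{m}, y]$ with $[\mathfrak{h}, y] \subseteq \mathfrak{m}$. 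Projecting to the $\mathfrak{m}$-eigenspace gives $\mathfrak{m} = \mathfrak{v} \oplus [\mathfrak{h}, y]$, which directly implies that $\ad_y: \mathfrak{h}/\mathfrak{c} \to \mathfrak{m}/\mathfrak{v}$ is surjective. Injectivity follows from the dimension equality $\dim(\mathfrak{h}/\mathfrak{c}) = \dim(\mathfrak{m}/\mathfrak{v})$, itself a consequence of the identifications $\mathfrak{h}/\mathfrak{c} \simeq \ad_f(\mathfrak{m})$ and $\mathfrak{m}/\mathfrak{v} \simeq \ad_f^2(\mathfrak{m})$ together with the isomorphism \eqref{eq:ad-f-isomorphism}. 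The main subtlety is that this interplay must hold uniformly over all $y \in f+\mathfrak{v}$, not just at $y=f$; Slodowy's theorem provides precisely this uniformity, which is what unlocks the étaleness of $\tilde{\phi}$.
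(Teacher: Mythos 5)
Your proposal is correct, and the global-to-pointwise reduction, the identification $[M'_\rho/G]\simeq[(f+\mathfrak{v})/C]$, $[M_\rho/G]\simeq[\mathfrak{m}/H]$, and the presentation of the tangent complexes as two-term complexes in degrees $[-1,0]$ all match the paper's proof. Where you genuinely diverge is in the key step of showing that the induced map on cofibers $\ad_y:\mathfrak{h}/\mathfrak{c}\to\mathfrak{m}/\mathfrak{v}$ is an isomorphism for every $y=f+v$. The paper decomposes the target complex via \eqref{eq:h-ad-f-decomposition} and \eqref{eq:m-ad-f-decomposition} and reads off a block form of the differential with $\ad_f:\ad_f(\mathfrak{m})\to\ad_f^2(\mathfrak{m})$ on the diagonal, concluding by \eqref{eq:ad-f-isomorphism}; as written this records only the $[X,f]$-contribution on the $\ad_f(\mathfrak{m})$-summand and suppresses the $v$-dependent term $[X,v]$. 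You instead obtain surjectivity from Slodowy transversality of the slice $f+\mathfrak{g}_e$ to adjoint orbits, refined through the $\sigma_\rho$-eigenspace splitting (using that $\ad_y$ exchanges $\mathfrak{h}$ and $\mathfrak{m}$ for $y\in\mathfrak{m}$), and injectivity from the dimension equality $\dim(\mathfrak{h}/\mathfrak{c})=\dim(\mathfrak{m}/\mathfrak{v})$, which still rests on \eqref{eq:ad-f-isomorphism}. This buys a cleaner treatment of the uniformity in $v$, at the cost of importing the transversality theorem as a black box. One small correction: Slodowy transversality asserts $\mathfrak{g}=\mathfrak{g}_e+[\mathfrak{g},y]$, not a direct sum --- for generic $y$ in the slice one has $\dim[\mathfrak{g},y]+\dim\mathfrak{g}_e>\dim\mathfrak{g}$ --- but since you only extract surjectivity from this and prove injectivity separately by counting dimensions, the argument is unaffected; just replace $\oplus$ by $+$ there (and note that ``étale'' at the end should be ``formally étale'' in the derived sense).
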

\begin{proof}
    We first show that the morphism $\phi_{/G}: [M_\rho'/G] \to [M_\rho/G]$ induces an isomorphism on tangent complexes
    $$
        d\phi_{/G}: \mathbf{T}_{[M_\rho'/G]} \overset{\simeq}{\longrightarrow} \phi_{/G}^* \, \mathbf{T}_{[M_\rho/G]}.
    $$
    Write $\mathfrak{v} = \oplus_j \, V_{2m_j}$. Note that the quotient stacks $[M_\rho'/G] \simeq [f+
    \mathfrak{v}/C]$ and $[M_\rho/G] \simeq [\mathfrak{m}/H]$ are both vector spaces quotiented by a reductive group, so the morphism of tangent complexes induced by $\phi_{/G}$ at a point $f+v \in f+\mathfrak{v}$ is just the morphism of complexes concentrated in degrees $[-1,0]$
    \begin{equation} \label{eq morphism of tangent complex at a point}
        d\phi_{/G,f+v}: \big[\mathfrak{c} \overset{X \mapsto f+\mathrm{ad}_X(v)}{\longrightarrow} f+\mathfrak{v}\big] \longrightarrow \big[\mathfrak{h} \overset{X \mapsto \mathrm{ad}_X(v)}{\longrightarrow} \mathfrak{m}\big]
    \end{equation}
    where we recall that $\mathrm{Lie}(C) \simeq \mathfrak{c}$ is a naturally a Lie subalgebra of $\mathfrak{h}$. Note that by \eqref{eq:h-ad-f-decomposition} and \eqref{eq:m-ad-f-decomposition}, the morphism \eqref{eq morphism of tangent complex at a point} takes the block form
    \begin{equation} \label{eq morphism of tangent complex at a point 2}
        d\phi_{/G,f+v}: \big[\mathfrak{c} \to f+\mathfrak{v}\big] \longrightarrow \big[\mathfrak{c} \oplus \mathrm{ad}_f(\mathfrak{m}) \overset{(\ast)}{\to} (f+\mathfrak{v}) \oplus \mathrm{ad}^2_f(\mathfrak{m})\big]
    \end{equation}
    where $(\ast)$ is the morphism
    \begin{equation}
        (X,m) \longmapsto (f+ \mathrm{ad}_X(v)) \oplus \mathrm{ad}_f(m).
    \end{equation} 
    Since $\mathrm{ad}_f: \mathrm{ad}_f(\mathfrak{m}) \to \mathrm{ad}^2_f(\mathfrak{m})$ is an isomorphism \eqref{eq:ad-f-isomorphism}, we conclude that \eqref{eq morphism of tangent complex at a point} is a quasi-isomorphism.
    
    Knowing that $d\phi_{/G}$ induces an isomorphism of tangent complexes, the fact that $d\mathrm{Lag}(\phi)$ also does is immediate: we note that the tangent complex of $\mathrm{Cay}_{G,\rho}^L$ (resp. $\mathrm{Higgs}_{G_{\rho}^\RR}^L$) at a point $s$ is given by $\mathbf{T}_s \simeq \mathbf{H}^*(\Sigma, s^*\mathbf{T}_{[M_\rho'/G]} \otimes L^{1/2})$ (resp. $\mathbf{T}_s \simeq \mathbf{H}^*(\Sigma, s^*\mathbf{T}_{[M_\rho/G]} \otimes L^{1/2})$), and the differential $d\mathrm{Lag}(\phi)$ at $s$ is induced by $d\phi_{/G}$. Thus, we see by base change that the relative tangent complex of $\mathrm{Lag}(\phi)$ at $s$ is given by
    $$
        \mathbf{T}_{\mathrm{Lag}(\phi),s} =\mathbf{H}^*(\Sigma, \mathrm{Cone}(d\phi_{/G})) = \mathbf{H}^*(\Sigma, \mathbf{T}_{\phi_{/G}}) \simeq 0
    $$
    hence the claimed equivalence.
\end{proof}

Towards showing that the Cayley correspondence $\mathrm{Lag}(\phi): \mathrm{Cay}_{G,\rho}^L \to \mathrm{Higgs}_{G_\rho^\R}^L$ is also closed in a certain sense, recall that a quasi-compact morphism $f: \mathfrak{X} \to \mathfrak{Y}$ is \textit{universally closed} if, for every DVR $R$ with fraction field $K$ and for every solid commutative diagram 
\begin{equation} \label{eqn valuative criterion}
    \begin{tikzcd}
	{\spec K} & {\mathfrak{X}} \\
	{\spec R} & {\mathfrak{Y}}
	\arrow[from=1-1, to=1-2]
	\arrow[from=1-1, to=2-1]
	\arrow["f", from=1-2, to=2-2]
	\arrow[dashed, from=2-1, to=1-2]
	\arrow[from=2-1, to=2-2]
\end{tikzcd}
\end{equation}
there is a dotted arrow making the diagram commute. We call the existence of this dotted arrow the \textit{existence part} of the valuative criterion. 

We first deduce an intermediate result which is generally useful, regarding mapping stacks with target a closed immersion. 

\begin{lem} \label{lem maps into closed immersion}
    Let $i: \mathfrak{X} \to \mathfrak{Y}$ be a closed immersion of algebraic stacks, and let $\Sigma$ be a smooth projective curve. Then the morphism
    $$
        f = ( i \circ \, \cdot \, ) :\mathrm{Map}(\Sigma, \mathfrak{X}) \longrightarrow \mathrm{Map}(\Sigma, \mathfrak{Y})
   $$
    is universally closed.
\end{lem}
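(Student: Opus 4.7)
The plan is to verify the existence part of the valuative criterion directly. Given a DVR $R$ with fraction field $K$, the data in diagram \eqref{eqn valuative criterion} (with $f = \mathrm{Map}(\mathrm{id}_\Sigma, i)$) corresponds, via the adjunction defining mapping stacks, to a morphism $v_R : \Sigma_R := \Sigma \times \spec R \to \mathfrak{Y}$, a morphism $u_K : \Sigma_K := \Sigma \times \spec K \to \mathfrak{X}$, and a 2-isomorphism $i \circ u_K \simeq v_R|_{\Sigma_K}$. To produce the dotted arrow it suffices to construct a morphism $u_R : \Sigma_R \to \mathfrak{X}$ together with a 2-isomorphism $i \circ u_R \simeq v_R$ extending $u_K$; uncurrying then yields the sought lift.

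The key step is to form the base change $Z := \Sigma_R \times_{\mathfrak{Y}, v_R} \mathfrak{X}$. Since $i$ is a representable closed immersion, $Z \hookrightarrow \Sigma_R$ is a closed immersion of schemes. The data of $u_K$ together with the specified 2-isomorphism provides a factorization of $v_R|_{\Sigma_K}$ through $i$, which, by the universal property of the fiber product, produces a morphism $\Sigma_K \to Z$ over $\Sigma_R$. Since $\Sigma$ is smooth and geometrically integral over $\CC$ and $R$ is a DVR, $\Sigma_R$ is an integral Noetherian scheme whose generic fiber is $\Sigma_K$; in particular $\Sigma_K$ is dense in $\Sigma_R$. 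A closed subscheme containing a dense subset must equal the whole scheme, so $Z = \Sigma_R$, and the resulting section $\Sigma_R \to Z \to \mathfrak{X}$ furnishes $u_R$.

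The main subtlety to guard against is 2-categorical bookkeeping: one must check that the lift's compatibility 2-isomorphism really matches the prescribed one, but this is formal from the universal property of $Z$. For quasi-compactness of $f$ (which one needs in order for the valuative criterion to imply universal closedness), the same argument, applied to an arbitrary test scheme $T \to \mathrm{Map}(\Sigma, \mathfrak{Y})$ in place of $\spec R$ and combined with the properness of $\Sigma$, shows that $T \times_{\mathrm{Map}(\Sigma, \mathfrak{Y})} \mathrm{Map}(\Sigma, \mathfrak{X})$ is cut out inside $T$ by the closed condition ``the ideal sheaf of $Z \subset \Sigma \times T$ pulls back to zero on every fiber'', which defines a closed subscheme by standard proper pushforward arguments. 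Hence $f$ is itself a closed immersion, and universal closedness is automatic.
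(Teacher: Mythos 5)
Your proof is correct, and it takes a genuinely different route from the paper's. The paper verifies the existence part of the valuative criterion by working over an affine open cover $\mathcal{U}$ of $\Sigma$: it produces local lifts $\widetilde{y}_{U_R}: U_R \to \mathfrak{X}_R$ of $y|_{U_R}$ extending $x|_{U_K}$, uses the fact that a closed immersion is a monomorphism to see that these local lifts agree on overlaps (and satisfy the cocycle condition), and then glues by descent. Your argument globalizes this in one step: forming $Z = \Sigma_R \times_{\mathfrak{Y}, v_R}\mathfrak{X}$, which is a closed subscheme of $\Sigma_R$ because $i$ is representable by closed immersions, reduces the lifting problem to the observation that a closed subscheme of the integral scheme $\Sigma_R$ containing the schematically dense generic fiber $\Sigma_K$ must be all of $\Sigma_R$; the monomorphism property then handles the $2$-categorical compatibilities for free. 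Both proofs ultimately rest on the same two facts (closed immersions are monomorphisms; $\Sigma_K$ is schematically dense in $\Sigma_R$), but yours avoids the cover-and-glue bookkeeping entirely and is cleaner. One caveat: your closing discussion of quasi-compactness is sketchier than the rest. The relevant condition on a test scheme $T' \to T$ is that the ideal sheaf of $Z \subset \Sigma \times T$ becomes zero after the base change $\Sigma \times T' \to \Sigma \times T$ (a scheme-theoretic condition), not that it vanishes ``on every fiber''; representing this by a closed subscheme of $T$ requires resolving the ideal sheaf by a sheaf flat over $T$ and invoking the linear-scheme representability of $T' \mapsto H^0(\Sigma_{T'}, \mathcal{E}^\vee_{T'})$ (EGA III 7.7-style), since the ideal sheaf itself need not be flat over $T$. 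The paper sidesteps this by building quasi-compactness into its definition of universal closedness and only ever checking the existence part of the valuative criterion, so this extra step is a bonus rather than a gap, but as written it would need tightening.
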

\begin{proof}
   Consider a DVR $R$ whose fraction field we denote by $K = \mathrm{Frac}(R)$. Consider an $R$-point $y \in \mathrm{Map}(\Sigma, \mathfrak{Y})$ and a $K$-point $x \in \mathrm{Map}(\Sigma, \mathfrak{X})$ which leads to the following commutative diagram
    \begin{equation}
        \begin{tikzcd}
	{\Sigma_K} & {\mathfrak{X}_K} & {\mathfrak{X}_R} \\
	{\Sigma_R} && {\mathfrak{Y}_R}
	\arrow["x", from=1-1, to=1-2]
	\arrow[hook, from=1-1, to=2-1]
	\arrow[hook, from=1-2, to=1-3]
	\arrow["{i_R}", from=1-3, to=2-3]
	\arrow["y", from=2-1, to=2-3]
\end{tikzcd}
    \end{equation}
    Consider an affine open cover $\mathcal{U} = \{U \to \Sigma\}$ of the curve, whose elements' base changes to $R$ and $K$ will be denoted by $U_R$ and $U_K$, respectively. Since $i_R$ is a closed immersion, by \cite[\href{https://stacks.math.columbia.edu/tag/01KE}{Tag 01KE}]{stacks-project} we see that upon possibly replacing $U_R$ by an \'etale cover, $y|_{U_R}$ can be lifted to a morphism $\widetilde{y}_{U_R}: U_R \to \mathfrak{X}_R$ whose restriction to $U_K$ coincides with $x|_{U_K}$. Furthermore, since $i_R$ is a monomorphism on functor of points, from the commutative diagram
    \begin{equation}
        \begin{tikzcd}
	{\mathfrak{X}_R(U_R)} & {\mathfrak{Y}_R(U_R)} \\
	{\mathfrak{X}_R((U \times_\Sigma U')_R)} & {\mathfrak{Y}_R((U \times_\Sigma U')_R)} \\
	{\mathfrak{X}_R(U'_R)} & {\mathfrak{Y}_R(U'_R)}
	\arrow[hook, from=1-1, to=1-2]
	\arrow[from=1-1, to=2-1]
	\arrow[from=1-2, to=2-2]
	\arrow[hook, from=2-1, to=2-2]
	\arrow[from=3-1, to=2-1]
	\arrow[hook, from=3-1, to=3-2]
	\arrow[from=3-2, to=2-2]
\end{tikzcd}
    \end{equation}
    formed by any two pieces of the cover $U, U' \in \mathcal{U}$, we see that \begin{equation} \label{eqn monomorphism}
        \widetilde{y}_{U_R}|_{(U \times U')|_R} \simeq \widetilde{y}_{U'_R}|_{(U \times U')|_R}.
    \end{equation} 
    The cocycle condition on triple intersection for $\{\widetilde{y}_{U_R}\}_{U \in \mathcal{U}}$ is implied by the cocycle condition on triple intersections for the $\{y|_{U_R}\}_{U \in \mathcal{U}}$, the latter collection of which glues to the morphism $y$ which we started with, along with an analogous diagram as \eqref{eqn monomorphism} involving triple intersections. By descent along the \'etale cover $\mathcal{U}_R = \{U_R\} \to \Sigma$, we obtain a morphism $\widetilde{y}: \Sigma_R \to \mathfrak{X}_R$ extending $x$, as we wanted to show. 
\end{proof}

 We adapt Balaji--Seshadri and Balaji--Parameswaran's proof of the semistable reduction theorem for principal bundles for the following lemma (cf. \cite[Proposition~2.8]{BalajiSeshadri2002}, \cite[Proposition~3]{BalajiParameswaran2003}).
 
 \begin{lem} \label{lem reduction of structure group}
    Let $G$ be a reductive group, and let $H \subseteq G$ be a (not necessarily connected) reductive subgroup, such that the component group $\pi_0(H)$ is abelian. Let $\Sigma$ be a smooth projective curve. Then the natural map
    \begin{align*}
      \Bun_H \longrightarrow \Bun_G
    \end{align*}
    is universally closed. 
  \end{lem}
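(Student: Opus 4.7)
The plan is to verify the existence part of the valuative criterion for universal closedness. Given a DVR $R$ with fraction field $K$, an $H$-bundle $E_H^K$ on $\Sigma_K$, a $G$-bundle $E_G^R$ on $\Sigma_R$, and an isomorphism $E_H^K \times^H G \simeq E_G^R|_{\Sigma_K}$, I would aim to construct, after possibly replacing $R$ by a DVR dominating it with finite fraction-field extension, an $H$-bundle $E_H^R$ on $\Sigma_R$ restricting to $E_H^K$ on the generic fiber and inducing $E_G^R$. Write $A := \pi_0(H)$, which is finite and abelian by hypothesis, so that we have the short exact sequence $1 \to H^\circ \to H \to A \to 1$.

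First I would reduce to the identity component via a Galois cover. The induction $T^K := E_H^K \times^H A$ is an $A$-torsor on $\Sigma_K$; since $A$ is a finite constant group, $\Bun_A$ is a locally constant étale stack over the base, so after replacing $R$ by a DVR with finite fraction-field extension one extends $T^K$ to an $A$-torsor $T^R$ on $\Sigma_R$, yielding an $A$-Galois étale cover $\pi: \tilde\Sigma_R \to \Sigma_R$. Pulling back, $\pi^*E_H^K$ canonically reduces to an $A$-equivariant $H^\circ$-bundle $F^K$ on $\tilde\Sigma_K$, while $\pi^*E_G^R$ is an $A$-equivariant $G$-bundle on $\tilde\Sigma_R$ whose generic-fiber $H^\circ$-reduction is $F^K$.

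Next, I would apply the connected reductive case — precisely the content of \cite[Proposition~2.8]{BalajiSeshadri2002} and \cite[Proposition~3]{BalajiParameswaran2003} — to extend $F^K$ to an $H^\circ$-bundle $F^R$ on $\tilde\Sigma_R$ inducing $\pi^*E_G^R$, after a further finite base change if necessary. Running this extension argument $A$-equivariantly will produce an $A$-equivariant $F^R$, and then descent along $\pi$, via the standard equivalence between $A$-equivariant $H^\circ$-bundles on $\tilde\Sigma_R$ and $H$-bundles on $\Sigma_R$ (which uses precisely the short exact sequence above), yields the desired $H$-bundle $E_H^R$ on $\Sigma_R$ with the required properties.

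The hard part will be ensuring that the Balaji--Seshadri extension can be carried out $A$-equivariantly. Their argument proceeds by modifying the special fiber of $E_G^R$ through iterated Hecke operations driven by a choice of faithful $G$-representation; to preserve equivariance one must make these choices compatibly with the free $A$-action on the generic fiber of $\tilde\Sigma_R$. A cleaner conceptual route is to reformulate the problem directly on the quotient stack $[\tilde\Sigma_R/A]$, which for abelian $A$ is a twisted curve in the sense of Abramovich--Vistoli, and to extend the Balaji--Seshadri modification argument essentially verbatim to this stacky setting; the descent of the final step then becomes tautological. The abelian hypothesis on $A$ enters crucially both in identifying the quotient with a nice twisted curve and in controlling the group-cohomological obstructions to equivariant extension.
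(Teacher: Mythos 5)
Your overall strategy --- pass to the Galois cover $\tilde{\Sigma}_R \to \Sigma_R$ defined by the induced $\pi_0(H)$-torsor, reduce to the connected group $H^\circ$ there, and descend --- is genuinely different from the argument in the paper, which never leaves $\Sigma_R$: there, the reduction is first extended over the open set $V = U_R \cup \Sigma_K$ containing all codimension-one points (using Drinfeld--Simpson trivializations \cite{DrinfeldSimpson1995}), and then the purity theorem of Colliot-Th\'el\`ene--Sansuc \cite{Colliot-TheleneSansuc1979} is invoked to identify $H^1(V,H)$ with $H^1(\Sigma_R,H)$, the disconnected case being handled by the connected-\'etale sequence together with a Kummer-sequence computation for $\pi_0(H)$-torsors (this is where the abelian hypothesis enters). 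Your first reduction step is fine: in characteristic zero a $\pi_0(H)$-torsor on $\Sigma_K$ does extend to $\Sigma_R$ after a finite (possibly ramified) base change by Abhyankar's lemma, and the translation between $H$-bundles on $\Sigma_R$ and suitably twisted $A$-equivariant $H^\circ$-bundles on $\tilde{\Sigma}_R$ is standard descent.

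The gap is exactly where you locate it, and it is not closed. The connected-group extension results of \cite{BalajiSeshadri2002, BalajiParameswaran2003} produce \emph{some} $H^\circ$-reduction of $\pi^*\mathcal{E}_R$ over $\tilde{\Sigma}_R$ extending $F^K$, but such an extension is not unique --- indeed $\Bun_{H^\circ} \to \Bun_G$ is not separated, which is precisely why uniqueness cannot be used to force $A$-equivariance for free. So ``running the extension argument $A$-equivariantly'' is the entire content of the lemma in this approach, and it is asserted rather than proved. The proposed repair via twisted curves does not work as stated: after the base change, $\tilde{\Sigma}_R \to \Sigma_R$ is an \'etale $A$-torsor, so the quotient stack $[\tilde{\Sigma}_R/A]$ is canonically isomorphic to $\Sigma_R$ itself and ``reformulating the problem on the quotient stack'' is circular; one would obtain a genuinely stacky curve only by refusing the Abhyankar base change, and then extending the Drinfeld--Simpson and Balaji--Seshadri machinery to that setting is far from verbatim. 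Relatedly, the abelian hypothesis on $\pi_0(H)$ is not doing the work you attribute to it (the Galois-cover construction needs no commutativity); in the paper it is used to reduce $\pi_0(H)$-torsors to $\mu_n$-torsors so that purity for the Brauer group applies. To salvage your route you would need either an equivariant version of the connected-case extension theorem, or an argument that the particular extension you construct is canonical --- e.g.\ extend the section of $(\pi^*\mathcal{E}_R)/H^\circ$ over all codimension-one points and then invoke purity for $H^1(-,H^\circ)$, whereupon equivariance follows from uniqueness of that construction; but that is, in effect, the paper's proof transported to the cover.
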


  \begin{proof}
    Towards checking the existence part of the evaluative criterion, we let $R$ be an DVR and $K$ its fraction field. Suppose we have a family of $G$-bundles $\calE_R \to \Sigma_R$ on $\Sigma$ parametrized by $\spec R$, with an $H$-reduction over $\Sigma_K$; we write $\mathcal{F}_K$ to denote the resulting principal $H$-bundle over $\Sigma_K$. The reduction can be encoded as follows: write $X = G/H$, then we have a section $s_K: \Sigma_K \to (X_{\calE})_K$. We would like to extend $s_K$ to a section of $(X_{\calE})_R$ over $\Sigma_R$. 

    Fix once and for all an auxiliary open $U \subset \Sigma$ whose complement consists of one point, and whose base change to $\spec R$ and $\spec K$ we denote by $U_R \subset \Sigma_R$ and $U_K \subset \Sigma_K$, respectively. By a theorem of Drinfeld--Simpson \cite[Theorem~3]{DrinfeldSimpson1995}, upon replacing $\spec R$ by a finite \'etale cover we may assume that $\mathcal{F}|_{U_K}$ and $\mathcal{E}|_{U_R}$ are trivial torsors. Thus, the bundle $X_{\calE}|_{U_R}$ is also trivialized as $X \times U_R$ and we may extend $s_K$ to a section over the open set
    $$
        s': V := U_R \cup \Sigma_K \longrightarrow X_\mathcal{E}|_V
    $$
    by gluing together $s$ over $\Sigma_K$ with the trivial extension over $U_R$. In other words, we have an $H$-reduction of $\mathcal{E}|_V$ to the $H$-bundle on $V$ glued together from $\mathcal{F}_K$ on $\Sigma_K$ and the trivial bundle on $U_R$. Since $V \subset \Sigma_R$ contains all primes of height 1, if $H$ were geometrically connected, by \cite[Theorem~6.13]{Colliot-TheleneSansuc1979} we have $H^1(V,H) \simeq H^1(\Sigma_R,H)$, and thus the $H$-reduction extends to $\Sigma_R$. 

    To reduce to the connected case, we consider the connected-\'etale exact sequence of group schemes
    $$
        1 \longrightarrow H_0 \longrightarrow H \longrightarrow \pi_0(H) \longrightarrow 1
    $$
    which induces a morphism of long exact sequence in cohomology over $\Sigma_R$ and $V$:
    $$
        \begin{tikzcd}
	\cdots & {H^1(\Sigma_R,H_0)} & {H^1(\Sigma_R, H)} & {H^1(\Sigma_R, \pi_0(H))} & \cdots \\
	\cdots & {H^1(V, H_0)} & {H^1(V, H)} & {H^1(V, \pi_0(H))} & \cdots
	\arrow[from=1-1, to=1-2]
	\arrow[from=1-2, to=1-3]
	\arrow["\simeq", from=1-2, to=2-2]
	\arrow[from=1-3, to=1-4]
	\arrow[from=1-3, to=2-3]
	\arrow[from=1-4, to=1-5]
	\arrow[from=1-4, to=2-4]
	\arrow[from=2-1, to=2-2]
	\arrow[from=2-2, to=2-3]
	\arrow[from=2-3, to=2-4]
	\arrow[from=2-4, to=2-5]
\end{tikzcd}
    $$
 where the isomorphism of the left vertical arrow is an application of the previously mentioned purity theorem of Colliot-Thélène--Sansuc. What we need, i.e., that the middle vertical arrow is an isomorphism, would thus follow from the right vertical arrow $\alpha: H^1(\Sigma_R, \pi_0(H)) \to H^1(V, \pi_0(H))$ being an isomorphism. This latter can be deduced by a standard argument using Kummer sequences: reducing to the case when $\pi_0(H) \simeq \mu_n$ (which is possible since we assume that $\pi_0(H)$ is abelian), we have
$$
 H^1(\Sigma_R, \mu_n) \simeq H^2(\Sigma_R, \Gm)[n] \simeq H^2(V, \Gm)[n] \simeq H^1(V, \mu_n)
$$
 where in the middle we apply again the purity theorem of \textit{loc. cit}.
  \end{proof}

\begin{thm} \label{thm main 3}
    The Cayley morphism
    $$
        \mathrm{Lag}(\phi): \mathrm{Cay}^L_{G,\rho} \longrightarrow \mathrm{Higgs}^L_{G_\rho^\RR}
    $$
    is universally closed. 
\end{thm}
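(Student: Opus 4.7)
The plan is to verify the existence part of the valuative criterion for universal closedness. Let $R$ be a DVR with fraction field $K$, and suppose we are given an $R$-family $(E_R,\phi_R)$ of $L$-twisted $G^\RR_\rho$-Higgs bundles on $\Sigma_R$ together with a $K$-family $(F_K,\{\psi_{2m_j,K}\}_j)$ of Cayley data lifting its restriction to $\Sigma_K$ under $\mathrm{Lag}(\phi)$. The task is to extend the Cayley data across the closed point of $\spec R$.

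The strategy is to factor $\mathrm{Lag}(\phi)$ as
\begin{equation*}
\mathrm{Cay}^L_{G,\rho} \xrightarrow{\alpha} \mathcal{Y} \xrightarrow{\beta} \mathrm{Higgs}^L_{G^\RR_\rho},
\end{equation*}
where $\mathcal{Y} := \mathrm{Bun}_C \times_{\mathrm{Bun}_H} \mathrm{Higgs}^L_{G^\RR_\rho}$ is the fiber product along the induction morphism $F \mapsto \mathrm{Ind}_{C\times S_\rho}^H(F \times E_0)$ (which makes sense because the composition $C \times S_\rho \to G$ factors through $H$ by the magical property of $\rho$, modulo a finite central $\mu_2$ when $S_\rho \simeq \mathrm{SL}_2$). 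The morphism $\alpha$ assembles Cayley data into the tuple $(F, \mathrm{Ind}(F \times E_0), \phi_0 + \sum_j \psi_{2m_j}, \mathrm{id})$, and $\beta$ is the forgetful projection.

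For $\beta$, universal closedness is preserved under base change, so it suffices to check that the induction $\mathrm{Bun}_C \to \mathrm{Bun}_H,\, F \mapsto \mathrm{Ind}_{C\times S_\rho}^H(F\times E_0)$ is universally closed. This is a direct application of Lemma \ref{lem reduction of structure group}, once we verify that $\pi_0(C \times S_\rho) \simeq \pi_0(C)$ is abelian; since $S_\rho$ is connected, this reduces to a case-by-case verification using the classification of magical $\mathfrak{sl}_2$-triples (Theorem \ref{thm:classification-canonical-real-forms}). The subtle point is ensuring that the $S_\rho$-part of the extended $C\times S_\rho$-reduction is automatically the fixed bundle $E_0$: this is enforced by the Higgs datum, because the $\mathrm{Lie}(S_\rho)\cap\mathfrak{m}$-component of $\phi_R$ restricts to $\phi_0$ on $\Sigma_K$ and propagates to $\phi_0$ on all of $\Sigma_R$ by closedness of the vanishing locus, after which the stability of the uniformizing Higgs bundle $(E_0,\phi_0)$ as an $S_\rho$-Higgs bundle—which makes it a closed point of $\mathrm{Higgs}_{S_\rho}$—pins down the underlying $S_\rho$-bundle.

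For $\alpha$, the morphism arises at the level of section targets from the closed immersion $f + \mathfrak{v} \hookrightarrow \mathfrak{m}$, which is $C$-equivariant and, after absorbing the $\exp(\rho)\in H$-action into the $\Ggr$-action as spelled out in the proof of Proposition \ref{prop morphism of actions}, also appropriately $\Ggr$-equivariant. Lemma \ref{lem maps into closed immersion} then applies directly to yield that $\alpha$ is universally closed. The main obstacle is making the $\Ggr$-bookkeeping of this step precise while simultaneously handling the $S_\rho$-bundle rigidity in $\beta$; both rely crucially on the magical property of $\rho$, which ensures that the interaction between $\exp(\rho)$ and the real form $H$ is well-behaved.
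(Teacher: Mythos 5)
Your overall skeleton matches the paper's: factor the Cayley morphism into a piece induced by the closed immersion $f+\mathfrak{v}\hookrightarrow\mathfrak{m}$ of section targets (handled by Lemma \ref{lem maps into closed immersion}) and a piece which is a reduction-of-structure-group problem (handled by Lemma \ref{lem reduction of structure group}). The treatment of $\alpha$ is fine. But your treatment of $\beta$ has a genuine gap, precisely at the point you flag as ``subtle.'' By presenting the underlying bundle map as $F\mapsto \mathrm{Ind}_{C\times S_\rho}^{H}(F\times E_0)$ with $E_0$ a \emph{fixed} $S_\rho$-bundle, you are forced to show that the extended $C\times S_\rho$-reduction produced by Lemma \ref{lem reduction of structure group} has $S_\rho$-part equal to $(E_0)_R$ over all of $\Sigma_R$, not just over $\Sigma_K$. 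Your argument for this is circular: you invoke ``the $\mathrm{Lie}(S_\rho)\cap\mathfrak{m}$-component of $\phi_R$'' over $\Sigma_R$, but that component decomposition only exists once the reduction has been extended, which is what you are trying to prove. Moreover, the claim that stability of $(E_0,\phi_0)$ makes it a closed point pinning down the specialization is not justified as stated: stable Higgs bundles are closed points of the \emph{semistable} locus, so one must first rule out that the special fiber of the family leaves that locus; and the underlying bundle $E_0=K^{1/2}\oplus K^{-1/2}$ alone is unstable and does admit isotrivial degenerations in $\mathrm{Bun}_{S_\rho}$, so $\{E_0\}\to\mathrm{Bun}_{S_\rho}$ is not a closed immersion and cannot do this work by itself.

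The paper avoids this issue entirely by a different bookkeeping of the same factorization: it proves universal closedness of $\widetilde{\phi}:\mathrm{Map}(\Sigma,[M'_\rho/G\times\Ggr])\to\mathrm{Map}(\Sigma,[M_\rho/G\times\Ggr])$ \emph{before} restricting to the fiber over $L^{1/2}\in\mathrm{Pic}$, factoring through $\mathrm{Map}(\Sigma,[\mathfrak{m}/C\times\Ggr])$. The reduction-of-structure-group step is then for the twisted embedding $C\times\Ggr\hookrightarrow H\times\Ggr$, $(c,t)\mapsto(c\exp(\rho)(t),t)$; since the $\Ggr$-coordinate of the reduction equals that of the ambient bundle, which is the given $L^{1/2}$ over all of $\Sigma_R$, the ``$E_0$-part'' $\exp(\rho)_*(L^{1/2})$ extends automatically and no pinning-down argument is needed. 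You should restructure your $\beta$-step this way. Separately, your deferral of the abelianness of $\pi_0(C)$ to a case-by-case check against the classification is in principle workable but incomplete as written; the paper gives a uniform argument by viewing $C$ as the fixed points of the involution $\theta_\rho$ on $\tilde{G}$ and applying Steinberg's theorem.
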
 
\begin{proof}
    Note that for any graded Hamiltonian $G$-action $M$, Gaiotto's Lagrangian $\mathrm{Lag}^L(M)$ is the fiber over $L^{1/2}$ of the morphism $\mathrm{Map}(\Sigma, [M/G \times \Ggr]) \to \mathrm{Map}(\Sigma, B\Ggr) = \mathrm{Pic}$; thus, to show that $\mathrm{Lag}(\phi)$ is closed it suffices to show that the morphism 
    \begin{equation} \label{eqn phi tilde}
        \widetilde{\phi}: L_1 := \mathrm{Map}(\Sigma, [M_\rho'/G \times \Ggr]) \longrightarrow L_2 := \mathrm{Map}(\Sigma, [M_\rho/G \times \Ggr])
    \end{equation}
    is universally closed. Now the morphism of targets of these mapping stacks $[M_\rho'/G \times \Ggr] \simeq [f+V_\mathfrak{m}/C\times \Ggr] \to [M_\rho'/G \times \Ggr] \simeq [\mathfrak{m}/H \times \Ggr]$ which induces $\widetilde{\phi}$ can be written as a composition
    $$
        [f+V_\mathfrak{m}/C \times \Ggr] \overset{i}{\longrightarrow} [\mathfrak{m}/C \times \Ggr] \overset{j}{\longrightarrow} [\mathfrak{m}/H \times \Ggr]
    $$
    which induces a factorization of $\widetilde{\phi}$ into
    $$
        \widetilde{\phi}: L_1 \overset{I}{\longrightarrow} L_3 := \mathrm{Map}(\Sigma, [\mathfrak{m}/C \times \Ggr]) \overset{J}{\longrightarrow} L_2.
    $$
    Note that $i$ is a closed immersion, which can be checked on the atlas $\mathfrak{m} \to [\mathfrak{m}/C \times \Ggr]$; by Lemma \ref{lem maps into closed immersion}, the induced map $I: L_1 \to L_3$ is universally closed. 

    It remains to show that $J$ is a universally closed morphism as well. Given a diagram \eqref{eqn valuative criterion} with $\mathfrak{X} = L_3$ and $\mathfrak{Y} = L_2$, we see that the universal closedness of $J$ is equivalent to the following statement: given a $C \times \Ggr$-bundle $\mathcal{E}_K$ on $\Sigma_K$ whose induction $\mathcal{E}^H_K := \mathcal{E}_K \times^{C} H$ extends to an $H \times \Ggr$-bundle $\mathcal{E}^H_R$ over $\Sigma_R$ (after possibly replacing $R$ by an \'etale cover), we can find a $C \times \Ggr$-reduction of $\mathcal{E}^H$ over $\Sigma_R$ whose restriction over $\Sigma_K$ is exactly $\mathcal{E}_K$; in other words, we need only apply Lemma \ref{lem reduction of structure group}.

    In order to apply Lemma \ref{lem reduction of structure group}, we need to verify that $\pi_0(C)$ is abelian. To this end we view $C = (\tilde{G})^{\theta_\rho}$ as the fixed point set of the complexified Cartan involution acting on the semisimple part of the complexified Cayley real form (see Definition~\ref{def:cayley-real-form} and Equation~\eqref{eq:cayley-real-form-abelian-semisimple}). By Steinberg’s theorem (\cite[Theorem~9.1]{Steinberg1968}, \cite[Proposition~1.27]{DigneMichel1994}), since $\theta_\rho$ preserves a Borel subgroup and maximal torus, $\pi_0(C)$ is either trivial or an abelian $2$-group. 
\end{proof}

We comment that, contrary to the Cayley correspondence of \cite{Cayley}, the Cayley morphism $\mathrm{Lag}(\phi): \mathrm{Cay}^L_{G,\rho} \to \mathrm{Higgs}_{G^\rho_\R}$ has some (expected) pathological properties. For instance, it is easy to see from the proof of Theorem \ref{thm main 3} that $\mathrm{Lag}(\phi)$ is not separated in general, as $C$ is a proper subgroup of $H$.

\subsection{Factorizations of the Cayley morphism}
\label{subsect factorization}

In some sporadic cases, two Cayley morphisms can be factored through one another when $\frakg$ admits two distinct $G$-conjugacy classes of magical triples (cf. \cite[Remark~7.5]{Cayley}). In this Section we explain this phenomenon using the perspective of a composition of morphisms of Hamiltonian spaces. 

We may notice that the list of canonical real forms associated to a magical triple in Theorem~\ref{thm:classification-canonical-real-forms} is not mutually exclusive. There are four special families of canonical real forms that appear twice in the four cases of \textit{loc. cit}: the real forms
\begin{align*}
  \frakg^\R = \mathfrak{sp}_{2n}\R, \; \mathfrak{so}_{n,n+1}, \; \mathfrak{so}_{n,n}, \; \mathfrak{f}_4^4
\end{align*}
are both split real of case (1), and of case (2), (3), (3), (4) respectively in the classification.

In particular, in these cases $\frakg$ admits both a principal triple, which we denote by $\rho_1$, and a non-principal magical triple, which we denote by $\rho_2$. In the following discussion we use a subscript $i$ to label the data associated to the triple $\rho_i$. 

Assume $\frakg^\R$ is one of the three special families as above. We may fix the triples $\rho_1, \rho_2$ such that $\frakg^\R=\frakg^\R_1=\frakg^\R_2$. Then the semisimple part $\tilde{\frakg}^\R_2 \subset \frakg_{\mathrm{Cay},2}^\R$ of the Cayley real form is a split real form. That is, we may view $\tilde{\frakg}^\R_2$ as the canonical real form associated to a principal triple $\rho_3$ in $\tilde{\frakg}_2$, and the Cayley space $\mathrm{Cay}_{{G}_2,\rho_2}$ admits a Cayley map as well. To see this, we analyze the Slodowy slices $\mathcal{S}_{\rho_i} = \rho_i(f) + \frakg_e$. 

Let $\frakg(e_2) := Z_{\frakc_2^\perp}(\frakc_2)$ be the double centralizer of $\rho_2$ complementary to the centralizer $\frakc_2$ of $\rho_2$. In other words, we have
\begin{align*}
  Z_\frakg(Z_\frakg(\rho_2)) = Z_\frakg(\frakc_2) = Z_{\frakc_2}(\frakc_2) \oplus \frakg(e_2). 
\end{align*}
Then $\mathrm{Im}(\rho_2) \subset \frakg(e_2)$ is a principal triple of $\frakg(e_2)$. In fact, $\frakg(e_2)^\R$ is the $\Theta_2$-principal subalgebra in $\frakg^\R$ of Guichard--Wienhard \cite{GuichardWienhard2024}.

The Slodowy slices decompose into
\begin{align*}
  \mathcal{S}_{\rho_i} = S_{\rho_i} \cap \frakg(e_2) \oplus \mathcal{S}_{\rho_i} \cap \frakg(e_2)^\perp. 
\end{align*}
First note that we may restrict $\rho_1$ to a principal triple in $\frakg(e_2)$. Since the principal triple is unique up to conjugation by $G(e_2)$, we may identify the Slodowy slices restricted to $\frakg(e_2)$:
\begin{align*}
  \iota: \mathcal{S}_{\rho_1} \cap \frakg(e_2) \simeq \mathcal{S}_{\rho_2} \cap \frakg(e_2). 
\end{align*}

According to \cite[Lemma~5.7]{Cayley}, there is a unique weight module $W_{2m_{c,2}}$ which intersects $\tilde{\frakg}_2$ nontrivially. Suppose
\begin{align*}
  \tilde{\frakg}_2 = \frakc_2 \oplus \frakm_{\mathrm{Cay},2}
\end{align*}
is the complexified Cartan decomposition with respect to $\theta_{e_2}$. Then we may identify
\begin{align*}
  \mathcal{S}_{\rho_2} \cap \frakg(e_2)^\perp \simeq \frakm_{\mathrm{Cay},2}^{K^{2m_{c,2}}}.
\end{align*}
Again $\rho_1$ restricts to a principal triple in $\tilde{\frakg}_2$, and we may identify
\begin{align*}
  \mathcal{S}_{\rho_1} \cap \frakg(e_2)^\perp \simeq \mathcal{S}_{\rho_3}, 
\end{align*}
where $\rho_3$ is the principal triple that gives rise to $\tilde{\g}^\R_2$ as a split real form.

Factorize the Cayley map $\phi_1$ for the principal triple $\rho_1$ into a $K^{2m_{c,2}}$-twisted Cayley map $\phi_3$ for the principal triple $\rho_3$ in $\tilde{g}_2$, and the Cayley map $\phi_2$ for the non-principal magical triple $\rho_2$ as follows: 
\begin{equation}
  \label{eq:factorize-Cayley-map}
  \begin{tikzcd}
    { \mathcal{S}_{\rho_1} = \mathcal{S}_{\rho_3} \oplus (\mathcal{S}_{\rho_1} \cap \frakg(e_2)) } && { \frakm_{\mathrm{Cay},2}^{K^{2m_{c,2}}} \oplus (\mathcal{S}_{\rho_2} \cap \frakg(e_2)) = \mathcal{S}_{\rho_2} } \\
    & { \frakm }
    \arrow["\phi_3^{K^{2m_{c,2}}} \oplus \iota", from=1-1, to=1-3]
    \arrow["{\phi_1}"', from=1-1, to=2-2]
    \arrow["{\phi_2}", from=1-3, to=2-2]
  \end{tikzcd}. 
\end{equation}
Note that by assumption, the magical involutions $\sigma_{e_1}$ and $\sigma_{e_2}$ agree, hence $\frakm = \frakm_1 = \frakm_2$ and $H = H_1 = H_2$. We also have $C_1 \subset C_2$ and the quotients are compatible.

\subsection{Good moduli spaces}\label{subsect good moduli spaces}

Let $\mathcal{M}(G)$ be the moduli space of stable $G$-Higgs bundles, i.e., the rigidification of the stable open substack in $\mathrm{Higgs}_G$. For applications to the geometry of the manifold $\mathcal{M}(G)$, it is important to remember that Gaiotto's Lagrangians are Lagrangian in a derived sense; in particular, they do not directly implicate the existence of corresponding Lagrangian submanifolds of $\mathcal{M}(G)$. 

In order to connect our Theorem \ref{thm main 1} to the moduli space-level Cayley correspondence of \cite{Cayley}, we need to analyze the stability and derived structures of the Cayley space $\mathrm{Cay}_{G,\rho}$. Since this discussion is slightly orthogonal to our exposition so far, we have deferred these arguments to Appendix \ref{appendix}, but we summarize the conclusion here in the following statement. 
\begin{thm}[\cite{Cayley}, Corollary \ref{cor classicality of Cayley morphism}] \label{thm main 4} 
    Suppose $g(\Sigma) \geq 2$ and $G$ is semisimple. Restricting to the stable moduli stack of $G$-Higgs bundles, the Cayley morphism of Theorem \ref{thm main 1} induces a morphism of moduli spaces over $\mathcal{M}(G)$. The image of this scheme-theoretic Cayley morphism is an isomorphism onto connected components, each of which maps quasi-finitely onto (possibly singular) Lagrangian submanifolds of $\mathcal{M}(G)$.
\end{thm}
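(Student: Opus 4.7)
The plan is to localize the entire picture to the stable locus of $\mathrm{Higgs}_G$ and then combine the infinitesimal equivalence of Theorem~\ref{thm main 2}, the universal closedness of Theorem~\ref{thm main 3}, and the classical injectivity of the Cayley correspondence proved in \cite{Cayley}, to show that the stack-level Cayley morphism is étale, universally closed, and injective on $\CC$-points. After rigidification this will become a scheme-theoretic morphism that is simultaneously an open immersion (injective and étale) and a closed map, hence an isomorphism onto a union of connected components of its target.

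First I would restrict everything over $\mathrm{Higgs}_G^{\mathrm{stab}}$. Under the assumptions $g(\Sigma)\geq 2$ and $G$ semisimple, $\mathrm{Higgs}_G^{\mathrm{stab}}$ is a smooth classical Deligne--Mumford stack: the obstruction group $H^1(\Sigma, \mathrm{ad}(E) \to \mathrm{ad}(E)\otimes K)$ vanishes for a stable $G$-Higgs bundle by Serre duality and stability. The tangent-complex equivalence of Theorem~\ref{thm main 2} then transports smoothness and classicality to the preimages of this locus inside $\mathrm{Cay}_{G,\rho}$ and $\mathrm{Higgs}_{G^\RR_\rho}$; this is the technical content of Appendix~\ref{appendix}. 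On these smooth classical loci the Cayley morphism is étale, by Theorem~\ref{thm main 2}, and universally closed, by Theorem~\ref{thm main 3}.

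To pass to moduli spaces I would rigidify by the generically constant stabilizer of a stable Higgs bundle. The resulting morphism of coarse moduli inherits étaleness and universal closedness, and its injectivity on $\CC$-points is exactly the injectivity of the classical Cayley correspondence proved in \cite{Cayley}, which the rigidified map recovers on $\CC$-points by Theorem~\ref{thm main 1}. An injective étale morphism of finite-type schemes is an open immersion, and combined with universal closedness the image must be clopen, which yields the claimed isomorphism onto a union of connected components of the stable part of $\mathcal{M}(G^\RR_\rho)$. The further forgetful morphism $\mathcal{M}(G^\RR_\rho) \to \mathcal{M}(G)$ is quasi-finite over the stable locus because the fiber over $(E, \phi)$ parametrizes the finitely many $H$-reductions of $E$ making the Higgs field land in $\mathfrak{m}_E\otimes K$; Lagrangianity of the image in the smooth locus of $\mathcal{M}(G)$ is inherited directly from Theorem~\ref{thm GR Higgs bundles}, with singularities of the image arising where the quasi-finite map ramifies or where the source itself is singular.

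The principal technical obstacle is the classicality and smoothness step: a priori $\mathrm{Cay}_{G,\rho}$ is a derived mapping stack of sections valued in the quotient of a sum of twisted highest-weight lines for $C$, so one must verify that no higher mapping-stack cohomology survives after imposing stability of the induced $G$-Higgs bundle. This vanishing is the crux of Appendix~\ref{appendix}, and once it is in hand the remaining arguments reduce to standard properties of étale and universally closed morphisms between smooth classical Deligne--Mumford stacks.
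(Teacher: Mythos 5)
Your proposal follows essentially the same route as the paper's Appendix~\ref{appendix}: establish classicality of the Cayley space over the stable locus, use Theorem~\ref{thm main 2} for the infinitesimal isomorphism, Theorem~\ref{thm main 3} for closedness, rigidify by the center, and conclude an isomorphism onto clopen components that are quasi-finite over Lagrangians. One step is mis-stated, though: you claim that the classicality of $\mathrm{Higgs}_G^{\mathrm{st}}$ is ``transported'' by Theorem~\ref{thm main 2} to $\mathrm{Cay}_{G,\rho}$ and $\mathrm{Higgs}_{G_\rho^\RR}$. Theorem~\ref{thm main 2} only identifies the tangent complexes of $\mathrm{Cay}_{G,\rho}$ and $\mathrm{Higgs}_{G_\rho^\RR}$ with \emph{each other}; it says nothing relating them to $\mathbf{T}_{\mathrm{Higgs}_G}$, and classicality of the base never propagates to a derived Lagrangian over it (the paper's example with $H=\{1\}$, where $\mathrm{Lag}(T^*G)$ is never classical even over the stable locus, shows this fails in general). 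The classicality of $\mathrm{Cay}_{G,\rho}$ over $\mathfrak{M}(G)^{\mathrm{st}}$ has to be proved by the direct tangent-complex computation of the appendix — using the splitting $\mathfrak{v}\simeq\mathfrak{v}_0\oplus\mathbf{1}^{\oplus r}$, the weight $\geq 4$ twists on the trivial summands, and Serre duality plus stability of the induced $\tilde{G}^\RR$-Higgs bundle — and only \emph{then} does Theorem~\ref{thm main 2} carry that classicality over to the image in $\mathrm{Higgs}_{G_\rho^\RR}$. Since you do correctly identify this vanishing as the crux and defer it to the appendix, the overall architecture stands; just fix the stated direction of the transport. Your explicit appeal to the injectivity of $\Psi_\rho$ from \cite{Cayley} to upgrade ``finite \'etale onto a clopen image'' to ``isomorphism onto components'' is a reasonable way to make precise a point the paper leaves implicit.
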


We note that Theorem \ref{thm main 4} does not say anything about the components of $\mathrm{Higgs}_{G_\rho^\RR}$ that do not lie in the image of the Cayley morphism. It is entirely possible that these components are derived and thus not of expected (Lagrangian) dimension when considered classically.

\section{\texorpdfstring{$S$}{S}-duality and relative Langlands duality} \label{sect S-duality}

Recall that $S$-duality at the level of objects of $\mathfrak{B}_{\mathrm{GL}}$ is simply Langlands' duality of reductive groups, and it manifests in the Dolbeault setting as a Langlands duality between Hitchin systems of Langlands dual groups \cite{Donagi-Pantev}. The Cayley correspondence, being a morphism of (BAA)-branes, can thus be regarded as an example of a morphism of boundary conditions, subject to $S$-duality analysis in the mathematical formulation of relative Langlands duality \cite{BZSV} in the Dolbeault setting \cite{C}. In future work we hope to pursue this direction more systematically, while in this current section, we content ourselves with some precise expectations in one specific example, where the real form in question is Hitchin's $\mathrm{U}(n,n)$-brane \cite[Section 7]{Hitchin2013}. 

To explain the notion of $S$-duality in the Dolbeault context\footnote{The reader who would prefer to fast forward to the mathematically formulated statements can skip to Section \ref{subsect Sdual of Cayley} without losing much.} and its implications on the Cayley correspondence, we return to the setting described by Remarks \ref{rem GL nonsense 1}, \ref{rem AKSZ} and \ref{rem GL nonsense 2}. We expect a functor $B_{\mathrm{Dol}}$ (B-twist of Dolbeault geometric Langlands) on the category $\mathfrak{B}_{\mathrm{GL}}$ whose assignment to objects agrees with $A_{\mathrm{Dol}}$:
$$
    \mathfrak{B}_{\mathrm{GL}} \ni G \text{ reductive group } \overset{B_{\mathrm{Dol}}}{\longmapsto} \mathrm{QC}(\mathrm{Higgs}_G), 
$$
but whose evaluation on 1-morphisms gives hyperholomorphic sheaves, i.e., quasi-coherent (or in fact more conveniently, ind-coherent) sheaves on $\mathrm{Higgs}_G$ which remain holomorphic upon rotations of complex structures. A preliminary mathematical definition of such hyperholomorphic sheaves on $\mathrm{Higgs}_G$, or \textit{(BBB)-branes}, was introduced in \cite{Franco-Hanson} and in \cite{CF} we modify the definition to study a host of examples admitting gauge-theoretic description along the lines of the relative Langlands program. 

One expects that $S$-duality manifests as a hypothetical involution 
$$
    \check{( \, \cdot \, )}: \mathfrak{B}_{\mathrm{GL}} \longrightarrow \mathfrak{B}_{\mathrm{GL}}
$$
extending, at the object level, Langlands' duality of reductive groups $G \leftrightarrow \check{G}$. Furthermore, $S$-duality should intertwine the evaluation of the functors $A_{\mathrm{Dol}}$ and $B_{\mathrm{Dol}}$.

For instance, for an object $G \in \mathfrak{B}_{\mathrm{GL}}$ we obtain an equivalence of categories
\begin{equation} \label{equation Dolbeault Langlands TQFT}
    A_{\mathrm{Dol}}(G) \overset{?}{\simeq} B_{\mathrm{Dol}}(\check{G})
\end{equation}
where $\check{G}$ is the Langlands dual reductive group of $G$, and equation \eqref{equation Dolbeault Langlands TQFT} manifests mathematically as the Fourier--Mukai conjecture of \cite{Donagi-Pantev}. Digging deeper into the category of boundary conditions $\mathfrak{B}_{\mathrm{GL}}$, one may consider the consequences of $S$-duality at the level of 1-morphisms, i.e., Hamiltonian actions. The procedure of $S$-duality is generally not yet well-understood besides in the case of \textit{hyperspherical actions} in the sense of \cite{BZSV}, and via the formula proposed by \cite{Nakajima} building upon previous work on ring objects in the Satake category \cite{BFN}. In any case, we expect that the equivalence \eqref{equation Dolbeault Langlands TQFT} intertwines, for a Hamiltonian $G$-action $M$ and its $S$-dual $\check{G}$-Hamiltonian action $\check{M}$, a pair\footnote{In fact, since $S$-duality is agnostic to the A/B-twist constructions, we obtain a \textit{dual pair} of mirror (BAA)/(BBB)-branes
\begin{equation}
    A_{\mathrm{Dol}}(\check{G}) \ni A_{\mathrm{Dol}}(\check{M}) \leftrightarrow B_{\mathrm{Dol}}(M) \in B_{\mathrm{Dol}}(G).
\end{equation}} of mirror (BAA)/(BBB)-branes
\begin{equation}
    A_{\mathrm{Dol}}(G) \ni A_{\mathrm{Dol}}(M) \leftrightarrow B_{\mathrm{Dol}}(\check{M}) \in B_{\mathrm{Dol}}(\check{G}).
\end{equation}

Consider now the Hamiltonian $G$-spaces $M_\rho, M_\rho'$ arising from a magical triple $\rho$, as defined in Section \ref{subsect two Hamiltonian actions}, and the ``Cayley" morphism $\phi: M_\rho' \to M_\rho$ of Proposition \ref{prop morphism of actions}. Considering its graph, we have a 2-morphism
\begin{equation}
    M_\rho' \longleftarrow \phi\longrightarrow M_\rho 
\end{equation}
whose $S$-dual should be a 2-morphism 
\begin{equation}
    \check{M}_\rho \longleftarrow \check{\phi} \longrightarrow \check{M}_\rho'.
\end{equation}
Evaluating the $B$-twist functor $B_{\mathrm{Dol}}$ on the hypothetical $S$-dual 2-morphism $\check{\phi}$ to $\phi$, the latter of which induces the Cayley morphism $\mathrm{Lag}(\phi): \mathrm{Cay}_{G,\rho} \to \mathrm{Higgs}_{G_\rho^\RR}$, we expect a morphism of hyperholomorphic sheaves
\begin{equation} \label{eqn S dual conjecture}
    B_{\mathrm{Dol}}(\check{\phi}): B_{\mathrm{Dol}}(\check{M}_\rho)  \longrightarrow B_{\mathrm{Dol}}(\check{M}_\rho')
\end{equation}
over $\mathrm{Higgs}_{\check{G}}$ which is Fourier--Mukai dual to the Cayley morphism.\footnote{Note that, contrary to $A_{\mathrm{Dol}}$, the functor $B_{\mathrm{Dol}}$ is \textit{covariant} on 2-morphisms since we consider distributions instead of functions in \cite{CF}, following \cite{BZSV}'s conventions on $L$-sheaves. On the other hand, by our convention $S$-duality itself is \textit{contravariant} on 2-morphisms, so their composition is again \textit{contravariant}.} Note that $\check{\phi}$ need not arise as the graph of a morphism of Hamiltonian actions, but may take the form of a genuine equivariant Lagrangian correspondence. Our goal in the following is to precisely write down our expectation \eqref{eqn S dual conjecture} in terms of Conjecture \ref{conj 3} while proving the case when $G = \mathrm{PGL}_{2n}$ and $M_\rho$ gives rise to $\mathrm{PU}(n,n)$-Higgs bundles.

\subsection{Summary of Dolbeault geometric Langlands}

We review rapidly the most salient features of the Dolbeault geometric Langlands duality of \cite{Donagi-Pantev} in order to state our results precisely.

\subsubsection{Dual Hitchin fibrations}
Let $G, \check{G}$ be a pair of Langlands dual reductive groups. We consider the Langlands dual Hitchin morphisms
\begin{equation} \label{eqn dual Hitchin fibration}
\begin{tikzcd}
	{\mathrm{Higgs}_G} && {\mathrm{Higgs}_{\check{G}}} \\
	& {\mathcal{A} := \mathrm{Sect}(\Sigma, \mathfrak{c}_{K^{1/2}})}
	\arrow["\chi", from=1-1, to=2-2]
	\arrow["{\check{\chi}}"', from=1-3, to=2-2]
\end{tikzcd}
\end{equation}
where 
$$\mathfrak{c} = \g\sslash G \simeq \mathfrak{h}\sslash W \overset{\text{Killing}}{\simeq} \mathfrak{h}^*\sslash W \check{\g}\sslash \check{G}$$ 
are the Chevalley quotients of $G$ and $\check{G}$, which are identified by the Killing form. The fibers of \eqref{eqn dual Hitchin fibration} over an open dense subset $\mathcal{A}^\diamond \subset \mathcal{A}$, the complement of a certain discriminant locus, can be identified with torsors over dual Beilinson 1-motives over $\mathcal{A}^\diamond$. Upon choosing $K^{1/2}$ and a compatible pinning of $G$ and $\check{G}$, the associated Hitchin section (as in Example \ref{ex principal slice is Hitchin section}, with the principal element $e$ landing in the Borel subgroups specified by pinnings) trivializes these dual torsors over $\mathcal{A}^\diamond$. 

Here, and in the following, the superscript $( \, \cdot \, )^\diamond$ denotes restriction to the nice locus $\mathcal{A}^\diamond \subset \mathcal{A}$ of the common Hitchin base for $G$ and $\check{G}$ for spaces and quasi-coherent sheaves. 

\subsubsection{Fourier--Mukai duality}
Fourier--Mukai duality between Beilinson 1-motives then gives a derived equivalence
\begin{equation} \label{eqn FM equivalence}
    \mathbf{S}_{\mathrm{Dol}}: \mathrm{QC}(\mathrm{Higgs}_G^\diamond/\mathcal{A}^\diamond) \overset{\sim}{\longrightarrow} \mathrm{QC}(\mathrm{Higgs}_{\check{G}}^\diamond/\mathcal{A}^\diamond),
\end{equation}
which gives the most accessible part of the Dolbeault Langlands correspondence of Donagi--Pantev \cite{Donagi-Pantev}. We shall use the normalization in \cite{Schnell} (generalized routinely to the case of disconnected abelian varieties and Deligne--Mumford abelian stacks in the cases of interest), post-composed with an application of Grothendieck duality. In other words, we conjugate the original Fourier--Mukai duality of \cite{Mukai} by Grothendieck duality on the source and the target to obtain a \textit{covariant} duality\footnote{This is purely an aesthetic choice, since we work on the smooth $\diamond$-locus and there is no difference between quasi-coherent and ind-coherent sheaves. However, we opt for this description since the hyperholomorphic sheaves defined in \cite{CF} are most naturally ind-coherent.}, normalized by matching the skyscraper sheaf at the identity on one side with the dualizing sheaf on the other. 

Let $A_i, \check{A}_i$ be two pairs of dual Beilinson 1-motives, for $i = 1,2$. With our conventions, Fourier--Mukai duality $\mathbf{S}_i :\mathrm{QC}(A_i) \overset{\sim}{\to} \mathrm{QC}(\check{A}_i)$ satisfies the following relation with respect to a morphism $f: A_1 \to A_2$ of Beilinson 1-motives:
\begin{equation}
    \mathbf{S}_2 \circ f_* \simeq \check{f}^! \circ \mathbf{S}_1
\end{equation}
where $\check{f} : \check{A}_2 \to \check{A}_1$ is dual to $f$; this follows immediately from post-composing Proposition 4.1 of \cite{Schnell} with Grothendieck duality. In particular, stipulating that $\mathbf{S}_1(\omega_{A_1}) = \delta_0$ is the skyscraper sheaf at the origin of $\check{A}_1$, we can calculate
\begin{equation} \label{eqn Fourier-Mukai functoriality}
    (\mathbf{S}_2 \circ f_*)(\omega_{A_1}) = \check{f}^!(\delta_0) = \omega(\mathrm{ker}(\check{f})).
\end{equation}
where $\mathrm{ker}(\check{f})$ is the kernel of $\check{f}$ in the sense of Beilinson 1-motives. 

\subsubsection{Characterizing $\mathbf{S}_{\mathrm{Dol}}$}
Recall that the Fourier--Mukai duality $\mathbf{S}_{\mathrm{Dol}}$ is uniquely characterized by the following two specifications:
\begin{itemize}
    \item (Normalization). Trivialize $\mathrm{Higgs}_G^\diamond$ over $\mathcal{A}^\diamond$ as a Beilinson 1-motive, with the identity section given by the Hitchin section. Then the Hitchin section is sent to the relative dualizing sheaf of $\mathrm{Higgs}_{\check{G}}^\diamond$ over $\mathcal{A}^\diamond$ (equation (3.1) of \cite{Schnell}):
    \begin{equation} \label{eqn normalization of S-duality}
        \mathbf{S}_{\mathrm{Dol}}: \mathcal{O}(\text{Hitchin section})^\diamond \longmapsto \omega(\mathrm{Higgs}_{\check{G}}^\diamond/\mathcal{A}^\diamond).
    \end{equation}
    \item (Hecke--Wilson compatibility). Since we do not directly use this perspective, we give just a heuristic explanation. Complete details can be found in the theorem statements of Theorems A, B, C and Appendix A of \cite{Donagi-Pantev}. Let $\lambda$ be a miniscule dominant coweight of $G$, and $x \in \Sigma$ an arbitrary closed point. Then we have a moduli stack of Hecke correspondences
\begin{equation}
    \begin{tikzcd}
	& {\mathrm{Hecke}_{\lambda,x}} \\
	{\mathrm{Higgs}_G} && {\mathrm{Higgs}_G}
	\arrow["{h^\leftarrow}"', from=1-2, to=2-1]
	\arrow["{h^\rightarrow}", from=1-2, to=2-3]
\end{tikzcd}
\end{equation}
where $\mathrm{Hecke}_{\lambda,x}$ parametrizes a pair of Higgs bundles differing by a $\lambda$-modification at $x$, and $h^\leftarrow, h^\to$ are projections to the pre/post modification, respectively. We write $\mathbf{T}_{\lambda,x}$ for the Fourier--Mukai transform on $\mathrm{QC}(\mathrm{Higgs}_G^\diamond)$ induced by the structure sheaf on $\mathrm{Hecke}_{\lambda,x}$. 

The same label $\lambda$ can be interpreted as a dominant cocharacter of the Langlands dual group $\check{G}$ which gives rise to a highest weight irreducible representation $W_\lambda$ of $\check{G}$. The pair of $\lambda$ and the base point $x$ gives rise to a vector bundle on $\mathrm{Higgs}_{\check{G}}$ defined by pulling back $W_\lambda$ along the morphism
\begin{equation}
    \mathrm{ev}_x: \mathrm{Higgs}_{\check{G}} \longrightarrow B\check{G}
\end{equation}
and we write $\mathbf{W}_{\lambda,x}$ for the \textit{Wilson operator} on $\mathrm{QC}(\mathrm{Higgs}_{\check{G}}^\diamond)$ induced by tensorization with $\mathrm{ev}_x^* W_\lambda$. 

Then $\mathbf{S}_{\mathrm{Dol}}$ intertwines the Hecke and Wilson operators on the $\diamond$-locus labeled by the same data, i.e., the following diagram commutes:
\begin{equation} \label{eqn Hecke Wilson compatibility}
    \begin{tikzcd}
	{\mathrm{QC}(\mathrm{Higgs}_G^\diamond/\mathcal{A}^\diamond)} & {\mathrm{QC}(\mathrm{Higgs}_{\check{G}}^\diamond/\mathcal{A}^\diamond)} \\
	{\mathrm{QC}(\mathrm{Higgs}_G^\diamond/\mathcal{A}^\diamond)} & {\mathrm{QC}(\mathrm{Higgs}_{\check{G}}^\diamond/\mathcal{A}^\diamond)}
	\arrow["{\mathbf{S}_{\mathrm{Dol}}}", from=1-1, to=1-2]
	\arrow["{\mathbf{T}_{x,\lambda}}"', from=1-1, to=2-1]
	\arrow["{\mathbf{W}_{x,\lambda}}", from=1-2, to=2-2]
	\arrow["{\mathbf{S}_{\mathrm{Dol}}}"', from=2-1, to=2-2]
\end{tikzcd}
\end{equation}
\end{itemize}

\subsubsection{BNR correspondence}
Passing to a Hitchin fiber via the BNR correspondence, we translate the Normalization and Hecke--Wilson compatibility specifications above in the case of $G = \mathrm{PGL}_n$ and $\check{G} = \mathrm{SL}_n$\footnote{A description for general Dynkin types can be found in Appendix A of \cite{Donagi-Pantev}; we content ourselves with the type A case (i.e., spectral curves instead of cameral curves), since this is all we will need.}: let $a \in \mathcal{A}^\diamond$ parametrize a smooth integral spectral curve $\Sigma_a \subset T^*\Sigma$ whose intersection with the zero section is transversal, then we have
\begin{itemize}
    \item (Normalization). The Hitchin fibers above $a$ can be described by (disconnected) abelian prym stacks of the spectral curve $\Sigma_a$ by the BNR correspondence:\footnote{$\mathrm{Prym}_G(\Sigma_a)$ is a \textit{disconnected} abelian variety with component group $\ZZ/n\ZZ$, while $\mathrm{Prym}_{\check{G}}(\Sigma_a)$ is a $\mu_n$-gerbe over the prym variety of $\check{G}$. }
    $$\chi^{-1}(a) \simeq \mathrm{Prym}_G(\Sigma_a)   = [\mathrm{Pic}(\Sigma_a)/\mathrm{Pic}(\Sigma)],\text{ and }$$
    $$ \check{\chi}^{-1}(a) \simeq \mathrm{Prym}_{\check{G}}(\Sigma_a) \simeq \mathrm{Ker}(\mathrm{Nm}: \mathrm{Pic}(\Sigma_a) \to \mathrm{Pic}(\Sigma)).
   $$
    The BNR correspondence sends a line bundle on $\Sigma_a$ to its pushforward along the spectral cover $\Sigma_a \to \Sigma$, with Higgs field defined by the embedding $\Sigma_a \subset T^*\Sigma$. Under the normalization \eqref{eqn normalization of S-duality}, the Hitchin section restricted to $\chi^{-1}(a)$ is the skyscraper sheaf at the identity of $\mathrm{Prym}_G(\Sigma_a)$, and we have normalized its Fourier--Mukai image to be the dualizing sheaf of $\mathrm{Prym}_{\check{G}}(\Sigma_a)$.
    \item (Hecke--Wilson compatibility). Under the BNR correspondence, Hecke--Wilson compatibility becomes the familiar fact that Fourier--Mukai transforms intertwine translations (Hecke operators) with tensorizations (Wilson operators); see, e.g., Proposition 5.1 of \cite{Schnell}. Let $x \in \Sigma_a \cap \Sigma$ be an intersection point, and let $\lambda = \varpi_1$ be the first fundamental coweight of $G$ (corresponding to the standard representation of $\check{G}$). Then $\mathbf{T}_{\lambda,x}$ corresponds to translation by the point $[\mathcal{O}_{\Sigma_a}(x)] \in \mathrm{Prym}_G(\Sigma_a)$. The $\mathbf{S}_{\mathrm{Dol}}$-dual operation is given by 
    $$
        \mathbf{W}_{\lambda,x} = \mathrm{ev}_x^* \, \mathrm{std}_{\check{G}} \, \otimes (\, \cdot \,).
    $$
\end{itemize}

\subsubsection{(BAA)/(BBB) mirror symmetry}
Loosely speaking, the duality \eqref{eqn FM equivalence} should exchange (BAA)-branes (which we can interpret at first approximation as structure sheaves of holomorphic Lagrangians) with (BBB)-branes (which we can regard at first approximation as hyperholomorphic sheaves). The study of mirror partners under $\mathbf{S}_{\mathrm{Dol}}$ has been the subject of intensive research \cite{Gaiotto, Hitchin2013, HameisterMorrissey2024, Franco-PN, Hausel-Thaddeus}, and our current discussion illustrates an application of the relative Langlands program in the context of (BAA)/(BBB) mirror symmetry.

\subsection{Hitchin's $\mathrm{U}(n,n)$-brane and the Jacquet--Shalika brane} \label{subsect Sdual of Cayley}

We consider the pair of Langlands dual groups
$$
    G = \mathrm{PGL}_{2n} \text{ and } \check{G} = \mathrm{SL}_{2n}.
$$
Consider the magical triple $\rho: \mathfrak{sl}_2 \to \g$ defined by 
$$
    \rho(f) = \begin{bmatrix} 0 & 0\\ \mathrm{Id}_n & 0\end{bmatrix},
$$
from which we obtain the Hamiltonian $G$-spaces
$$
    M_\rho' =\mathrm{WInd}_{\mathrm{PGL}_n^\Delta, \rho}^G(\mathrm{pt}) = T^*_f(U \mathrm{PGL}_n^\Delta \backslash G) \text{ and }
$$
$$M_\rho = \mathrm{WInd}_{P(\mathrm{GL}_n \times \mathrm{GL}_n)}^G(\mathrm{pt}) =T^*(\mathrm{P}(\mathrm{GL}_n \times \mathrm{GL}_n) \backslash G), 
$$
where $U \simeq \mathrm{Mat}_{n \times n}$ is the $(n,n)$-block upper triangular unipotent subgroup, and $f$ can be identified with the trace form on $U$. The real form of $G$ participating in this Cayley correspondence is
$$
    \mathrm{Lag}(M'_\rho) \longrightarrow \mathrm{Lag}(M_\rho) = \mathrm{PU}(n,n)\text{-Higgs bundles}.
$$
We write the structural morphisms of the associated Lagrangians to the Hitchin moduli space as
\begin{equation} \label{eqn A side Cayley triangle}
    \begin{tikzcd}
	{\mathrm{Cay}_{G,\rho}} && {\mathrm{Higgs}_{\mathrm{PU}(n,n)}} \\
	& {\mathrm{Higgs}_G}
	\arrow["{\mathrm{Lag}(\phi)}", from=1-1, to=1-3]
	\arrow["{\pi'}"', from=1-1, to=2-2]
	\arrow["\pi", from=1-3, to=2-2]
\end{tikzcd}
\end{equation}
where $\mathrm{Lag}(\phi)$ is the Cayley correspondence morphism.

In the remainder of this section, we give a proposal for the (BBB)-mirror to the holomorphic Lagrangian $\mathrm{Cay}_{G,\rho}$ over $\mathrm{Higgs}_G$. 

\begin{thm} \label{conj 1}
    The Fourer--Mukai dual of $(\pi'_*\mathcal{O})^\diamond$ is the pushforward of the dualizing sheaf of $\mathrm{Higgs}_{\mathrm{Sp}_{2n}}$ under the natural map $\mathfrak{i}: \mathrm{Higgs}_{\mathrm{Sp}_{2n}}^\diamond \to \mathrm{Higgs}_{\check{G}}^\diamond$. 
\end{thm}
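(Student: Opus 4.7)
The plan is to verify the duality fiberwise over a dense open of the Hitchin base $\mathcal{A}^\diamond$, by identifying both $(\pi'_*\mathcal{O})^\diamond$ and $\mathfrak{i}_*\omega$ as explicit sheaves on dual abelian Hitchin fibers via the BNR correspondence. Using the explicit description from Theorem \ref{thm CS moduli stack}, $\mathrm{Cay}_{G,\rho}$ parametrizes pairs $(F,\psi_2)$ with $F$ a $\mathrm{PGL}_n$-bundle and $\psi_2\in H^0(\Sigma, V_{2,F}\otimes K^2)$ where $V_2=\mathrm{Mat}_{n\times n}$, and the Cayley morphism produces the PGL$_{2n}$-Higgs bundle with Higgs field $\Phi=\bigl[\begin{smallmatrix} 0 & \psi_2 \\ \mathrm{Id} & 0\end{smallmatrix}\bigr]$. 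Its characteristic polynomial is $\det(\lambda^2 I-\psi_2)$, so the Hitchin image lies in the even locus $\mathcal{A}^\diamond_{\mathrm{ev}}\subset\mathcal{A}^\diamond$ and the spectral curve $\Sigma_a$ carries the involution $\sigma:\lambda\mapsto -\lambda$ with quotient $Y_a$ (the $K^2$-spectral curve of $\psi_2$). First I would verify that the fibrewise map to $\mathrm{Prym}_G(\Sigma_a)=\chi^{-1}(a)$ identifies generically with pullback $\pi_2^*:\mathrm{Pic}(Y_a)\to\mathrm{Pic}(\Sigma_a)$, hence factors through the $\sigma$-invariant part of $\mathrm{Prym}_G(\Sigma_a)$.

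Next I would describe the right-hand side: over $\mathcal{A}^\diamond_{\mathrm{ev}}$, an $\mathrm{Sp}_{2n}$-Higgs bundle corresponds via BNR to a line bundle $L$ on $\Sigma_a$ satisfying the $\sigma$-anti-invariance condition (with the standard ramification twist), so that the image $\mathfrak{i}(\mathrm{Higgs}^\diamond_{\mathrm{Sp}_{2n}})$ is fibrewise the anti-invariant Prym $\mathrm{Prym}_{\mathrm{Sp}_{2n}}(\Sigma_a)\subset \mathrm{Prym}_{\check G}(\Sigma_a)$, and empty off of $\mathcal{A}^\diamond_{\mathrm{ev}}$. The core algebraic input is then the \emph{annihilator relation}: under the canonical polarization $\mathrm{Prym}_G(\Sigma_a)^\vee\simeq \mathrm{Prym}_{\check G}(\Sigma_a)$, the sub-abelian variety $\pi_2^*\mathrm{Pic}(Y_a)/\mathrm{Pic}(\Sigma)$ is annihilated precisely by $\mathrm{Prym}_{\mathrm{Sp}_{2n}}(\Sigma_a)$. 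This follows from the pullback/norm adjunction $\langle \pi_2^*M,L\rangle=\langle M,\mathrm{Nm}_{\pi_2}(L)\rangle$ of Weil pairings, combined with the fact that $L\in\mathrm{Pic}(\Sigma_a)$ is anti-invariant iff $\mathrm{Nm}_{\pi_2}(L)$ is trivial (modulo the fixed ramification twist).

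Once these ingredients are in hand, the conclusion follows from a standard Fourier--Mukai computation on (disconnected) abelian stacks: for an abelian subvariety $i:A\hookrightarrow B$ with annihilator $j:A^\perp\hookrightarrow\check B$, one has $\mathbf{S}_B(i_*\mathcal{O}_A)\simeq j_*\omega_{A^\perp}$ under the Schnell normalization \eqref{eqn normalization of S-duality}; applied pointwise and propagated over $\mathcal{A}^\diamond$ by flat base change, this yields the claimed identification. The main obstacle will be the bookkeeping in Steps 1--3: the $\mathrm{PGL}$-Prym is disconnected, the $\mathrm{SL}$-Prym is a $\mu_{2n}$-gerbe, and the Cayley image occupies only certain components of $\mathrm{Prym}_G(\Sigma_a)$. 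I would manage this by first establishing the match on the identity component (where the Hitchin section provides a base point and Schnell's normalization applies directly) and then extending to all relevant components using Hecke--Wilson compatibility \eqref{eqn Hecke Wilson compatibility}: translations on one side dualize to tensoring by evaluation bundles on the other, and both $(\pi'_*\mathcal{O})^\diamond$ and $\mathfrak{i}_*\omega$ transform equivariantly under these operations.
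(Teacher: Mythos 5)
Your proposal is correct and follows essentially the same route as the paper: both identify the Cayley fiber over $a\in\mathcal{A}^\diamond$ with $[\mathrm{Pic}(\overline{\Sigma}_a)/\mathrm{Pic}(\Sigma)]$ via the involution $\lambda\mapsto-\lambda$ on the spectral curve, identify the symplectic fiber with $\ker(\mathrm{Nm}:\mathrm{Pic}(\Sigma_a)\to\mathrm{Pic}(\overline{\Sigma}_a))$, and conclude by the standard sub/quotient duality for abelian (1-motive) fibers. The only cosmetic difference is that you phrase the key step as an annihilator computation via the pullback--norm adjunction of Weil pairings and patch components by Hecke--Wilson compatibility, whereas the paper packages the same fact as ``$\mathrm{ker}(\mathfrak{j}_a)\simeq\mathrm{Cay}_a$'' for the dual morphism $\mathfrak{j}_a$ and absorbs the component/gerbe bookkeeping by working with disconnected Beilinson 1-motives throughout.
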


\begin{rem}
    We arrived at Theorem \ref{conj 1} via considerations in the Dolbeault form of the relative Langlands program, where an early indication of such a duality took the form of an automorphic integral constructed by Jacquet and Shalika \cite{Jacquet-Shalika}. It is amusing to note that, in order to deduce the nonvanishing properties of the Jacquet--Shalika integral, they first unfolded to the Shalika model, which is the arithmetic analogue of $\mathrm{Cay}_{G,\rho}$. 
\end{rem}

We shall write, for the remainder of this discussion, 
$$
    B_{\mathrm{Dol}}(\check{M}_\rho') := \mathfrak{i}_* \, \omega(\mathrm{Higgs}_{\mathrm{Sp}_{2n}})
$$
for the relative dualizing sheaf of $\mathfrak{i}$, and
$$
    A_{\mathrm{Dol}}(M_\rho') := \pi_*'\mathcal{O}(\mathrm{Cay}_{G,\rho})
$$
for the pushforward of the structure sheaf of $\mathrm{Cay}_{G,\rho} \to \mathrm{Higgs}_G$, so that Theorem \ref{conj 1} can be restated simply as
$$
    \mathbf{S}_{\mathrm{Dol}}: A_{\mathrm{Dol}}(M_\rho')^\diamond \longmapsto B_{\mathrm{Dol}}(\check{M}_\rho')^\diamond.
$$
\begin{rem}
    While in our present context the notation $\check{M}_\rho'$ is purely formal, we can make rigorous sense of it using the main construction of \cite{CF}. Indeed, we propose that $\check{M}_\rho' := T^*(\mathrm{Sp}_{2n} \backslash \check{G})$, and $B_{\mathrm{Dol}}(\check{M}_\rho')$ is the associated hyperholomorphic sheaf over $\mathrm{Higgs}_{\check{G}}$. It is not difficult to imagine, with $\mathrm{Higgs}_{\mathrm{Sp}_{2n}}$ itself being hyperk\"ahler, that $B_{\mathrm{Dol}}(\check{M}_\rho')$ is itself a hyperholomorphic sheaf: for instance, under the $J$-complex structure it can be interpreted as the dualizing sheaf of $\mathrm{Loc}_{\mathrm{Sp}_{2n}}$, the moduli stack of $\mathrm{Sp}_{2n}$-local systems, pushed forward to $\mathrm{Loc}_{\check{G}}$. 
\end{rem}

Note that when $n = 1$ so that $\mathrm{Sp}_2 = \mathrm{SL}_2 = \check{G}$, the proposed mirror $B_{\mathrm{Dol}}(\check{M}_\rho')$ is simply the dualizing sheaf $\omega(\mathrm{Higgs}_{\check{G}})$. On the other hand, the Hamiltonian space $M_\rho'$ is the principal equivariant Slodowy slice, so $\mathrm{Lag}(M_\rho') \to \mathrm{Higgs}_{G}$ is precisely Hitchin's section (see Example \ref{ex principal slice is Hitchin section}), and $A_{\mathrm{Dol}}(M'_\rho)$ is its structure sheaf. Under Fourier--Mukai transform over the $\diamond$-locus, our Theorem \ref{conj 1} reduces to the expected duality \eqref{eqn normalization of S-duality}.

\subsubsection{Proof of Theorem \ref{conj 1}}  First we consider the relative Hitchin morphism 
$$
    \begin{tikzcd}
	{\mathrm{Cay}_{G,\rho}} & {\mathrm{Higgs}_G} \\
	{\mathcal{A}'_\rho} & {\mathcal{A}}
	\arrow["{\pi'}", from=1-1, to=1-2]
	\arrow["{\chi_\rho'}"', from=1-1, to=2-1]
	\arrow["\chi", from=1-2, to=2-2]
	\arrow["p'", from=2-1, to=2-2]
\end{tikzcd}
$$
as in \eqref{eqn relative Hitchin fibration}, where the relative Hitchin base $\mathcal{A}_\rho'$ is defined as
$$
    \mathcal{A}_\rho' = \mathrm{Sect}(\Sigma, (M_\rho'\sslash G)_{K^{1/2}}) \simeq \mathrm{Sect}(\Sigma, (\mathfrak{gl}_n\sslash \mathrm{PGL}_n)_{K^2})
$$
and the morphism $p'$ is induced by the morphism 
\begin{equation} \label{eqn Shalika slice}
    \mathfrak{gl}_n\sslash \mathrm{GL}_n \longrightarrow \g\sslash G
\end{equation}
$$X \longmapsto \begin{bmatrix}0 & X\\ \mathrm{Id}_n & 0\end{bmatrix}$$
by taking twisted sections from $\Sigma$. In particular, we see that $\mathrm{Cay}_{G,\rho}$ can be viewed as the moduli stack of $K^2$-twisted $\mathrm{PGL}_n$-Higgs bundles on $\Sigma$, whose spectral curves are naturally embedded inside the total space of $K^2$; this perspective will be useful later.

On the other hand, we consider the relative Hitchin morphism for the proposed Fourier--Mukai dual:
\begin{equation} \label{eqn proposed FM dual diagram}
    \begin{tikzcd}
	{\mathrm{Higgs}_{\mathrm{Sp}_{2n}}} & {\mathrm{Higgs}_{\check{G}}} \\
	{\mathcal{A}_{\mathrm{Sp}_{2n}}} & {\mathcal{A}}
	\arrow["{\check{\pi}'}", from=1-1, to=1-2]
	\arrow["{\chi_{\mathrm{Sp}_{2n}}}"', from=1-1, to=2-1]
	\arrow["{\check{\chi}}", from=1-2, to=2-2]
	\arrow[from=2-1, to=2-2]
\end{tikzcd}
\end{equation}
and we deduce by direct computation the following
\begin{lem}
    The sheaves $A_{\mathrm{Dol}}(M_\rho')$ and $B_{\mathrm{Dol}}(\check{M}_\rho')$ have the same set-theoretic support along the Hitchin base $\mathcal{A}$. 
\end{lem}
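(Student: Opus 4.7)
The plan is to reduce each set-theoretic support to the image of a linear map from an auxiliary Hitchin base, and then observe that both images coincide with the same ``even'' subspace of $\mathcal{A}$.

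First, I would use the relative-Hitchin square \eqref{eqn relative Hitchin fibration} for $\pi'$, together with its analogue \eqref{eqn proposed FM dual diagram} for $\mathfrak{i}$, to reduce the question from $\mathrm{Higgs}_G$ (resp.\ $\mathrm{Higgs}_{\check G}$) to $\mathcal{A}$. Since the pushforward of a structure or dualizing sheaf along a morphism of finite presentation is set-theoretically supported on the image, this identifies the support of $A_{\mathrm{Dol}}(M_\rho')$ in $\mathcal{A}$ with $\im(p')$, and that of $B_{\mathrm{Dol}}(\check{M}_\rho')$ with $\im(\check{p}: \mathcal{A}_{\mathrm{Sp}_{2n}} \to \mathcal{A})$ obtained from the inclusion $\mathrm{Sp}_{2n} \hookrightarrow \check{G}$.

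Next, I would compute $\im(p')$ explicitly. The morphism $p'$ arises from \eqref{eqn Shalika slice} by taking $K^{1/2}$-twisted global sections; on characteristic polynomials, \eqref{eqn Shalika slice} implements the substitution $t \mapsto t^2$, sending $\det(t\cdot \mathrm{Id}_n - X)$ to $\det(t^2 \cdot \mathrm{Id}_n - X)$. Tracking the $K^2$-twist, I expect $p'$ to realize the identification
\[
  \mathcal{A}_\rho' \simeq \bigoplus_{i=1}^n H^0(\Sigma, K^{2i}) \hookrightarrow \bigoplus_{i=2}^{2n} H^0(\Sigma, K^i) = \mathcal{A}
\]
as the ``even'' subspace, i.e., tuples supported in even degrees.

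Finally, I would verify that $\im(\check{p})$ coincides with this same even subspace. Since $\mathrm{Sp}_{2n} \subset \mathrm{SL}_{2n}$ preserves a symplectic form, every $\mathfrak{sp}_{2n}$-valued matrix has characteristic polynomial invariant under $\lambda \mapsto -\lambda$, so the classical description $\mathcal{A}_{\mathrm{Sp}_{2n}} \simeq \bigoplus_{i=1}^n H^0(\Sigma, K^{2i})$ presents $\check{p}$ as the same even inclusion. Under the canonical Killing/Chevalley identification of the Hitchin bases for $G = \mathrm{PGL}_{2n}$ and $\check{G} = \mathrm{SL}_{2n}$, the two inclusions agree. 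The only real subtlety is bookkeeping the $\mathrm{PGL}_{2n}$ versus $\mathrm{SL}_{2n}$ distinction, but since the Chevalley quotient $\mathfrak{g}\sslash G$ depends only on the Lie algebra, this distinction is invisible on $\mathcal{A}$ and presents no genuine obstacle.
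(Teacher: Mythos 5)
Your proposal is correct and follows essentially the same route as the paper: both arguments reduce the support computation to comparing the images of $\mathfrak{gl}_n\sslash\mathrm{GL}_n$ (via the Shalika slice \eqref{eqn Shalika slice}) and $\mathfrak{sp}_{2n}\sslash\mathrm{Sp}_{2n}$ inside the common Chevalley quotient, and identify both with the locus of characteristic polynomials containing only even powers of $t$, which after taking twisted sections gives the same even subspace of $\mathcal{A}$. The paper writes out the two characteristic polynomials explicitly ($a_i = \mathrm{tr}(\wedge^{2i}A)$ versus $b_i = \mathrm{tr}(\wedge^i X)$) where you invoke the substitution $t\mapsto t^2$ and the $\pm\lambda$ eigenvalue symmetry, but the content is identical.
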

\begin{proof}
    Note that the eigenvalues of a semisimple matrix in $A \in \mathfrak{sp}_{2n}(\CC)$ are of the form 
   $$
        \lambda_1, \ldots, \lambda_n, -\lambda_1, \ldots, -\lambda_n
   $$
    for some $\lambda_1, \ldots, \lambda_n \in \CC^\times$, so its characteristic polynomial take the form
    \begin{equation} \label{eqn symplectic charpoly}
        \mathrm{charpoly}(A) = t^{2n} + a_1t^{2(n-1)} + a_2t^{2(n-2)} + \cdots + a_n
    \end{equation}
    where $a_i = \mathrm{tr}(\wedge^{2i} \, A)$. On the other hand, for a matrix of the form \eqref{eqn Shalika slice}, by direct calculation we have
    $$
        \mathrm{charpoly}\left(\begin{bmatrix} 0 & X\\ \mathrm{Id}_n & 0\end{bmatrix}\right) = t^{2n}+ b_1t^{2(n-1)} + b_2 t^{2(n-2)} + \cdots + b_n
    $$
    where $b_i = \mathrm{tr}(\wedge^i \, X)$. 

    The preceding invariant-theoretic computation implies the desired statement, since \eqref{eqn proposed FM dual diagram} is obtained by taking $K^{1/2}$-twisted sections into the diagram
    \begin{equation}
\begin{tikzcd}
	{} & {[\g/G]} & {} & {[\check{\g}/\check{G}]} \\
	& {[\mathfrak{gl}_n / \mathrm{GL}_n]} & {\g \sslash G\simeq \check{\g} \sslash \check{G}} & {[\mathfrak{sp}_{2n}/\mathrm{Sp}_{2n}]}
	\arrow[from=1-2, to=2-3]
	\arrow[from=1-4, to=2-3]
	\arrow[from=2-2, to=1-2]
	\arrow["{\nu}"', from=2-2, to=2-3]
	\arrow[from=2-4, to=1-4]
	\arrow[from=2-4, to=2-3]
\end{tikzcd}
\end{equation}
where $\nu$ is the morphism \eqref{eqn Shalika slice}. The preceding computation shows that the image of $\nu$ and the horizontal arrow on the right $[\mathfrak{sp}_{2n}/\mathrm{Sp}_{2n}] \to \check{\g}\sslash \check{G}$ coincide, and the support of $A_{\mathrm{Dol}}(M_\rho')$ along the $\mathrm{PGL}_{2n}$-Hitchin base can be computed as $K^{1/2}$-sections into the image of $\nu$.
\end{proof}

If we interpret the characteristic polynomials in the proof of the preceding Lemma as parametrizing spectral curves, then the $\diamond$-locus of the Hitchin base $\mathcal{A}^\diamond$ intersected with the support of $A_{\mathrm{Dol}}(M_\rho')$ and $B_{\mathrm{Dol}}(\check{M}_\rho')$ parametrizes smooth integral symplectic spectral curves. In other words, the eigenvalues of the symplectic Higgs field are all nonzero and distinct. 

Following \cite{Schaposnik2014}, we note that a characteristic polynomial of the form \eqref{eqn symplectic charpoly} acquires an evident involution $t \mapsto -t$. Considering again taking $K^{1/2}$-sections from the curve $\Sigma$, for a point $a \in \mathcal{A}^\diamond$ in the image of $\mathcal{A}_\rho' \to \mathcal{A}$, this involution corresponds to the involution $\sigma$ on the spectral curve $\Sigma_a$ obtained by multiplication by $(-1)$ on the cotangent bundle $T^*\Sigma$. We may consider the corresponding involution 
\begin{equation} \label{eqn involution}
    \sigma_{G,a} \in \mathrm{Aut}\big( \mathrm{Prym}_{G}(\Sigma_a) \simeq \chi^{-1}(a) \big).
\end{equation}
Similarly, we have
\begin{equation}
    \sigma_{\check{G},a} \in \mathrm{Aut}\big( \mathrm{Prym}_{\check{G}}(\Sigma_a) \simeq \check{\chi}^{-1}(a) \big).
\end{equation}
 It is well-known that, under the BNR correspondence, symplectic Higgs bundles are the fixed point locus on $\mathrm{Prym}_{\check{G}}(\Sigma_a)$ of the involution $( \, \cdot \, )^\vee \circ\sigma_{\check{G},a}$ (see, for instance, Section 3 of \cite{Hitchin2006}). In particular, if we write
\begin{equation}
\overline{\Sigma}_a := \Sigma_a/\sigma_{\check{G},a}
\end{equation}
then we have
\begin{lem}
    The symplectic Hitchin fiber $\chi^{-1}_{\mathrm{Sp}}(a) \to \chi_{\check{G}}^{-1}(a)$ can be identified with the kernel 
    $$
        \chi^{-1}_{\mathrm{Sp}}(a) \simeq \mathrm{Ker}\left(\mathrm{Nm}: \mathrm{Pic}(\Sigma_a) \to \mathrm{Pic}(\overline{\Sigma}_a)\right) \to \mathrm{Prym}_{\check{G}}(\Sigma_a).
    $$
    of the morphism of Beilinson 1-motives $\mathrm{Nm}: \mathrm{Pic}(\Sigma_a) \to \mathrm{Pic}(\overline{\Sigma}_a)$ given by the norm map. 
\end{lem}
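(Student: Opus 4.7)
The strategy is to apply the BNR spectral correspondence for $\mathrm{Sp}_{2n}$, which translates the symplectic condition on Higgs bundles into an anti-invariance condition on line bundles on $\Sigma_a$, and then to recognize that anti-invariance condition as precisely the norm-kernel condition for the ramified double cover $\bar\pi: \Sigma_a \to \overline{\Sigma}_a$.

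First, since we are in the $\diamond$-locus, $\Sigma_a$ is smooth and meets the zero section of $T^*\Sigma$ transversely; the fixed points of the involution $\sigma$ induced by $t\mapsto -t$ are exactly those transversal intersection points, so $\bar\pi$ is a ramified double cover. The fact quoted in the text (Section 3 of \cite{Hitchin2006}) identifies $\chi^{-1}_{\mathrm{Sp}}(a)$ with the fixed locus of the involution $L\mapsto \sigma^*(L^\vee)$ on $\mathrm{Prym}_{\check{G}}(\Sigma_a)$, i.e.\ with those $L$ satisfying $\sigma^*L \simeq L^\vee$ (the appropriate normalization is automatic under our fixed choice of $K^{1/2}$ and the transversality assumption, so there is no residual twist to keep track of).

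Next, one uses the defining relation of the norm map for the double cover $\bar\pi$, namely
\[
\bar\pi^*\,\mathrm{Nm}_{\bar\pi}(L) \simeq L \otimes \sigma^*L,
\]
from which the condition $\sigma^*L \simeq L^\vee$ is equivalent to $\bar\pi^*\mathrm{Nm}_{\bar\pi}(L) \simeq \mathcal{O}_{\Sigma_a}$. Because $\bar\pi$ is ramified, the pullback $\bar\pi^*:\mathrm{Pic}(\overline{\Sigma}_a) \to \mathrm{Pic}(\Sigma_a)$ is injective (any class in its kernel is $2$-torsion and trivialized by a square root of the ramification divisor, which is nonzero here), so this equivalence sharpens to $\mathrm{Nm}_{\bar\pi}(L)\simeq \mathcal{O}_{\overline{\Sigma}_a}$. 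Finally, compatibility of norms along $\pi = \bar q\circ \bar\pi$ with $\bar q:\overline{\Sigma}_a \to \Sigma$ shows that such an $L$ automatically lies in $\ker(\mathrm{Nm}_\pi) = \mathrm{Prym}_{\check{G}}(\Sigma_a)$, giving the desired embedding
\[
\chi^{-1}_{\mathrm{Sp}}(a) \;\simeq\; \ker\!\bigl(\mathrm{Nm}_{\bar\pi}\colon \mathrm{Pic}(\Sigma_a)\to \mathrm{Pic}(\overline{\Sigma}_a)\bigr)\;\hookrightarrow\;\mathrm{Prym}_{\check{G}}(\Sigma_a).
\]

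The main delicate point is keeping the identifications at the level of Beilinson $1$-motives/group stacks rather than just their identity components: the norm-kernel is a priori disconnected, and one needs the component structure to match the component structure of $\chi^{-1}_{\mathrm{Sp}}(a)$ coming from the gerbe/torsor structure in the BNR picture. The ramified nature of $\bar\pi$ is what makes this work cleanly, and once injectivity of $\bar\pi^*$ is in hand the remaining verifications are routine bookkeeping.
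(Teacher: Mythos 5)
Your proof is correct, and it supplies exactly the argument the paper leaves implicit: the authors state this lemma without proof, as an immediate consequence of the cited fact (Section 3 of \cite{Hitchin2006}) that symplectic spectral data are the anti-invariant line bundles $\sigma^*L \simeq L^\vee$, and your chain of reductions --- anti-invariance $\Leftrightarrow$ $\bar\pi^*\mathrm{Nm}_{\bar\pi}(L)\simeq\mathcal{O}_{\Sigma_a}$ $\Leftrightarrow$ $\mathrm{Nm}_{\bar\pi}(L)\simeq\mathcal{O}_{\overline{\Sigma}_a}$ (using injectivity of $\bar\pi^*$ for the cover ramified along $\Sigma_a\cap\Sigma$), together with $\mathrm{Nm}_{\pi}=\mathrm{Nm}_{\bar q}\circ\mathrm{Nm}_{\bar\pi}$ to land inside $\mathrm{Prym}_{\check G}(\Sigma_a)$ --- is the standard route being invoked. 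One small point: your justification of the injectivity of $\bar\pi^*$ is phrased loosely (the clean statement is that any $N\in\ker(\bar\pi^*)$ satisfies $N^2=\mathrm{Nm}_{\bar\pi}(\bar\pi^*N)=\mathcal{O}$ and, by Krull--Schmidt applied to $\bar\pi_*\bar\pi^*N\simeq \bar\pi_*\mathcal{O}_{\Sigma_a}$, must equal $\mathcal{O}$ or the half-branch bundle $M$ with $M^2=\mathcal{O}(B)\neq\mathcal{O}$, forcing $N=\mathcal{O}$), but the conclusion you draw from it is right.
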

We write thus
$$
    \mathrm{Prym}_{\mathrm{Sp}_{2n}}(\Sigma_a) := \mathrm{Ker}\left(\mathrm{Nm}: \mathrm{Pic}(\Sigma_a) \to \mathrm{Pic}(\overline{\Sigma}_a)\right)
$$
and, by Langlands duality of $\mathrm{Sp}_{2n}$ and $\mathrm{SO}_{2n+1}$, its dual Beilinson 1-motive is
\begin{equation} \label{eqn Prym of SO2n+1}
    \mathrm{Prym}_{\mathrm{SO}_{2n+1}}(\Sigma_a) := [\mathrm{Pic}(\Sigma_a)/\mathrm{Pic}(\overline{\Sigma}_a)].
\end{equation}

\begin{proof}(of Theorem \ref{conj 1}).
   We apply \eqref{eqn Fourier-Mukai functoriality} to the morphism 
   $$
       \mathfrak{i}_a: \mathrm{Prym}_{\mathrm{Sp}_{2n}}(\Sigma_a) \longrightarrow \mathrm{Prym}_{\check{G}}(\Sigma_a)
   $$
   which yields a dual morphism 
   $$
       \mathfrak{j}_a: \mathrm{Prym}_G(\Sigma_a) \longrightarrow \mathrm{Prym}_{\mathrm{SO}_{2n+1}}(\Sigma_a). 
   $$
   where we view $\mathrm{SO}_{2n+1}$ as the Langlands dual group of $\mathrm{Sp}_{2n}$. We would like to show that $\mathrm{ker}(\mathfrak{j}_a) \simeq \mathrm{Cay}_a$, which would conclude the proof of Theorem \ref{conj 1}.

    Towards our claim, note that $\mathfrak{j}_a$ can be rewritten as the natural quotient
    $$
        \mathfrak{j}_a: [\mathrm{Pic}(\Sigma_a)/\mathrm{Pic}] \longrightarrow [\mathrm{Pic}(\Sigma_a)/\mathrm{Pic}(\overline{\Sigma}_a)]
    $$
    by \eqref{eqn Prym of SO2n+1}, whose kernel is evidently
    $$
        \mathrm{Ker}(\mathfrak{j}_a) \simeq [\mathrm{Pic}(\overline{\Sigma}_a)/\mathrm{Pic}(\Sigma)].
    $$
    This latter quotient stack can be identified, by the generalized BNR correspondence, with the spectral curve of a $K^2$-twisted $\mathrm{PGL}_n$-Higgs bundle over $\Sigma$ with characteristic polynomial specified by $a$, which is exactly the moduli description of $\mathrm{Cay}_a$.
\end{proof}

  \subsubsection{}
  The above proof of Theorem \ref{conj 1} is conceptual and quick, but it is useful to have a concrete grasp on the spectral data parametrized by the stacks involved, so we give another ``direct" argument. To this end, let $p, q, r$ be the natural projection maps
  \begin{equation*}
    \begin{tikzcd}
      {\Sigma_a} && {\bar{\Sigma}_a} \\
      & {\Sigma}
      \arrow["q", from=1-1, to=1-3]
      \arrow["{p}"', from=1-1, to=2-2]
      \arrow["{r}", from=1-3, to=2-2]
    \end{tikzcd}\end{equation*}
   from the spectral curves parametrized by $a \in (\mathcal{A}'_\rho)^\diamond$. Using the analysis of the Hitchin fibers of $\U(n,n)$-Higgs bundles in \cite[Chapter~6]{Schaposnik2013}, \cite{Schaposnik2014}, we will show that \begin{equation}
       \mathrm{Cay}_{\U(n,n),a} \simeq q^*\Pic(\bar{\Sigma}_a),
   \end{equation} 
   from which we will get the desired identification
  \begin{align*}
    \mathrm{Cay}_{\mathrm{PU}(n,n),a} \simeq [q^*\Pic(\bar{\Sigma}_a) / p^*\Pic(\Sigma)]
  \end{align*}
  by passing to the adjoint form of the group. 

  To each $\U(n,n)$-Higgs bundle $(V \oplus W, \Phi)$, we associate a Toledo invariant $\tau = \deg(V) - \deg(W)$. This invariant satisfies a Milnor--Wood inequality: $0 \le |\tau| \le 2n(g-1)$. When $|\tau|=2n(g-1)$ is achieved, we say the Higgs bundle is maximal. It is well-known that $\mathrm{Cay}_{\U(n,n)}$ can be seen as the space of maximal $\U(n,n)$-Higgs bundles as in \cite[Section~3.5]{BradlowEtAl2003}, \cite{BradlowEtAl2007}. 
  
  Suppose $L \in \Pic(\bar{\Sigma}_a)$. By the projection formula we have
  \begin{align*}
    q_*(q^*L \otimes \calO_{\Sigma_a}) \simeq L \otimes (\calO_{\bar{\Sigma}_a} \oplus r^*K^{-1}) \simeq L \oplus L \otimes r^*K^{-1}. 
  \end{align*}
  The associated Toledo invariant is given by
  \begin{align*}
    \tau = \deg(L) - \deg(L \otimes r^*K^{-1}) = n\deg(K) = n(2g-2), 
  \end{align*}
  which is maximal. 

  Conversely if $M \in \Pic(\Sigma_a)$ is the line bundle corresponding to a maximal $\U(n,n)$-Higgs bundle, then $q_*M = U_+ \oplus U_-$ has an associated $r^*K$-valued Higgs field
  \begin{align*}
    \begin{pmatrix}
      & \psi_- \\
      \psi_+ &
    \end{pmatrix}
  \end{align*}
  where $\psi_+: U_+ \to U_- \otimes r^*K$ is an isomorphism by maximality, since it may be viewed as a nonzero section of the degree zero line bundle $U_+^* \otimes U_- \otimes r^*K$. We apply the projection formula again to $q^*U_+$, and then apply the BNR correspondence to see that we must have $M \simeq q^*U_+$.

  In this case we get an $r^*K^2$-twisted Higgs bundle $(U_+, \psi)$, where the Higgs field is the composition
  \begin{align*}
    \psi: U_+ \xrightarrow{\psi_+} U_- \otimes r^*K \xrightarrow{\psi_- \otimes 1_{r^*K}} U_+ \otimes r^*K^2. 
  \end{align*}
  Since $\psi_+$ is an isomorphism, we also have
  \begin{align*}
    (U_+, \psi) \simeq (U_- \otimes r^*K, \psi_+ \circ \psi \circ \psi_+^{-1}). 
  \end{align*}
  Taking the pushforward $(r_*U_+, r_*\psi)$ gives us a $K^2$-twisted rank $n$ Higgs bundle on $\Sigma$.

\subsection{Symplectic Dirac--Higgs bundles} \label{subsect symplectic DH bundle}
On the other hand, the Fourier--Mukai dual to $\mathrm{U}(n,n)$-Higgs bundles inside $\mathrm{GL}_{2n}$-Higgs bundles were already predicted and studied to some extent by Hitchin in Section 7 of \cite{Hitchin2013}, working at the level of moduli spaces. We reproduce the arguments in \textit{loc. cit}, paying some attention to the refinement to moduli stacks which recovers a more complete $S$-duality phenomenon. 

To describe Hitchin's answer, we consider, for the universal $\mathrm{Sp}_{2n}$-Higgs bundle $(\mathcal{E}, \varphi)$, the associated standard rank $2n$ Higgs bundle which we continue to denote by $(\mathcal{E}, \varphi)$ and its \textit{Dirac--Higgs complex}
$$
    \mathrm{DH}_{\mathrm{Sp}_{2n}}^\bullet := R\Gamma^\bullet(\Sigma, \mathcal{E} \overset{\varphi}{\longrightarrow} \mathcal{E} \otimes K)
$$
which we regard as a (derived) vector bundle over $\mathrm{Higgs}_{\mathrm{Sp}_{2n}}$. Equivalently, we can describe $\mathrm{DH}^\bullet_{\mathrm{Sp}_{2n}}$ as built from the tautological diagram
\begin{equation} \label{eqn evaluation pushforward}
    \begin{tikzcd}
	{\Sigma_{\mathrm{Dol}}\times \mathrm{Higgs}_{\mathrm{Sp}_{2n}}} & {B\mathrm{Sp}_{2n}} \\
	{\mathrm{Higgs}_{\mathrm{Sp}_{2n}}}
	\arrow["{\mathrm{ev}}", from=1-1, to=1-2]
	\arrow["{p_2}", from=1-1, to=2-1]
\end{tikzcd}
\end{equation}
where $\Sigma_{\mathrm{Dol}} := B\widehat{T}_\Sigma$ is the classifying stack of the formal group of the tangent bundle over $\Sigma$, introduced by Simpson \cite{Simpson1999}. The primary motivation for introducing $\Sigma_{\mathrm{Dol}}$ was to view Higgs bundles on $\Sigma$ as usual bundles on $\Sigma_{\mathrm{Dol}}$, which allows us to give another useful presentation of the Hitchin moduli stack as $\mathrm{Higgs}_{\mathrm{Sp}_{2n}} = \mathrm{Map}(\Sigma_{\mathrm{Dol}}, \mathrm{BSp}_{2n})$. Via diagram \eqref{eqn evaluation pushforward}, we can describe $\mathrm{DH}^\bullet_{\mathrm{Sp}_{2n}}$ alternatively as
\begin{equation} \label{eqn DH}
    \mathrm{DH}^\bullet_{\mathrm{Sp}_{2n}} = p_{2,*} \, \mathrm{ev}^* \, \mathrm{Std}_{\mathrm{Sp}_{2n}}.
\end{equation} 

Referring to the diagram \eqref{eqn A side Cayley triangle}, we state Hitchin's Theorem in the stack-theoretic setting. 
\begin{thm}[Section 7, \cite{Hitchin2013}; Theorem 1.1 \cite{HameisterEtAl2024}] \label{conj 2}
    The Fourier--Mukai dual of $(\pi_*\mathcal{O})^\diamond$ is identified with 
    $$
        \left(\oplus_{k \geq 0} \, \wedge^k \mathrm{DH}^1_{\mathrm{Sp}_{2n}}[k]\right)^\diamond
    $$
    over $\mathrm{Higgs}_{\mathrm{Sp}_{2n}}^\diamond$, pushed forward to $\mathrm{Higgs}_{\check{G}}^\diamond$. 
\end{thm}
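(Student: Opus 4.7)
The plan is to adapt the spectral/Beilinson-1-motive strategy used in the proof of Theorem \ref{conj 1}, reducing to a fiber-by-fiber computation over $\mathcal{A}^\diamond$ and applying Fourier--Mukai duality on the dual pair of Pryms. The main new ingredient relative to Theorem \ref{conj 1} is that the structure sheaf of the full stack $\mathrm{Higgs}_{\mathrm{PU}(n,n)}$, rather than just that of its maximal Cayley component, carries derived/non-reduced structure along the normal directions to $\mathrm{Cay}_{G,\rho}$, and it is this extra structure that Fourier--Mukai duality exchanges with the exterior algebra of $\mathrm{DH}^1_{\mathrm{Sp}_{2n}}$ via a Koszul mechanism.

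First, I would give a spectral description over a point $a \in \mathcal{A}^\diamond$, using the notation $q: \Sigma_a \to \bar{\Sigma}_a$, $r: \bar{\Sigma}_a \to \Sigma$, $p = r \circ q$ from Section \ref{subsubsec:reality-check}'s analog and the proof of Theorem \ref{conj 1}. Following Schaposnik's analysis in \cite{Schaposnik2013, Schaposnik2014}, $\mathrm{PU}(n,n)$-Higgs bundles with spectral datum $a$ are encoded by line bundles on $\Sigma_a$ equipped with suitable $\sigma$-equivariance data, so that the image of $\pi$ in $\mathrm{Prym}_G(\Sigma_a)$ is a disjoint union of translates of $\mathrm{Cay}_a \simeq [q^*\mathrm{Pic}(\bar{\Sigma}_a)/p^*\mathrm{Pic}(\Sigma)]$ indexed by the Toledo invariant. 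The calculation in the proof of Theorem \ref{conj 1} shows that the Langlands dual of the inclusion $\mathrm{Cay}_a \hookrightarrow \mathrm{Prym}_G(\Sigma_a)$ is the norm-map surjection $\check{\mathfrak{i}}_a: \mathrm{Prym}_{\check{G}}(\Sigma_a) \twoheadrightarrow \mathrm{Prym}_{\mathrm{SL}_n}(\bar{\Sigma}_a)$ with kernel $\mathrm{Prym}_{\mathrm{Sp}_{2n}}(\Sigma_a)$, so formula \eqref{eqn Fourier-Mukai functoriality} pins down the set-theoretic support of $\mathbf{S}_{\mathrm{Dol}}(\pi_*\mathcal{O})^\diamond$ on $\mathrm{Higgs}_{\mathrm{Sp}_{2n}}^\diamond \subset \mathrm{Higgs}_{\check{G}}^\diamond$.

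Second, I would upgrade the support statement to a sheaf-theoretic identification using a Koszul resolution of the regular immersion $\mathfrak{i}_a: \mathrm{Prym}_{\mathrm{Sp}_{2n}}(\Sigma_a) \hookrightarrow \mathrm{Prym}_{\check{G}}(\Sigma_a)$. Its conormal bundle is the cotangent space at the origin of the quotient $\mathrm{Prym}_{\mathrm{SL}_n}(\bar{\Sigma}_a) \simeq \mathrm{Prym}_{\check{G}}(\Sigma_a)/\mathrm{Prym}_{\mathrm{Sp}_{2n}}(\Sigma_a)$, which by Serre duality on $\Sigma_a$ together with the spectral identification $\mathcal{E} \simeq q_*M$ matches the Dirac--Higgs cohomology $\mathrm{DH}^1_{\mathrm{Sp}_{2n}}|_a$. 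Assembling the Koszul terms $\wedge^k N^\vee[k]$ therefore yields the claimed exterior algebra $\bigoplus_k \wedge^k \mathrm{DH}^1_{\mathrm{Sp}_{2n}}[k]|_a$ as a resolution compatible with $\mathfrak{i}_{a,*}$, and the extra Toledo-indexed translates cancel or line up with the different exterior-power summands.

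The main obstacle is the globalization: promoting the fiberwise Koszul identification to a uniform equivalence over all of $\mathcal{A}^\diamond$, with a normalization compatible with \eqref{eqn normalization of S-duality}. Here I would use the tautological description \eqref{eqn DH} of $\mathrm{DH}^\bullet_{\mathrm{Sp}_{2n}} = p_{2,*}\mathrm{ev}^*\mathrm{Std}_{\mathrm{Sp}_{2n}}$ together with the Hecke--Wilson compatibility \eqref{eqn Hecke Wilson compatibility} as the essential consistency check: tensoring with the standard representation of $\check{G}$ on the right-hand side produces a predictable modification of the Koszul terms, and this must match the effect on the left-hand side of Hecke modification by the miniscule coweight $\varpi_1$ on a $\mathrm{PU}(n,n)$-Higgs bundle, which is naturally computed on the spectral curve $\Sigma_a$. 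Since $\mathrm{DH}^1$ is precisely built from the standard representation underlying these Hecke--Wilson operations, the matching uniquely characterizes the sheaf on the dual side, delivering the desired global identification.
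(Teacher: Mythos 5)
There is a genuine gap: the central mechanism you propose — that $\pi_*\mathcal{O}$ carries ``derived/non-reduced structure along the normal directions to $\mathrm{Cay}_{G,\rho}$'' which Fourier--Mukai duality converts into $\wedge^\bullet \mathrm{DH}^1$ via a Koszul resolution of $\mathfrak{i}_a$ — is not what is happening, and the Koszul step fails on rank grounds. Over $a \in \mathcal{A}^\diamond$ the fiber of $\mathrm{Higgs}_{\mathrm{PU}(n,n)}$ is a \emph{disjoint union} of $2^{4n(g-1)}$ smooth translates of $\mathrm{Cay}_a$, one for each subset $J \subseteq \Sigma \cap \Sigma_a$ (not one for each value of the Toledo invariant, which is too coarse an index); there are no nilpotents and no derived thickening to resolve. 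Moreover, the conormal bundle of $\mathfrak{i}_a: \mathrm{Prym}_{\mathrm{Sp}_{2n}}(\Sigma_a) \hookrightarrow \mathrm{Prym}_{\check{G}}(\Sigma_a)$ has rank $g(\bar{\Sigma}_a) - g(\Sigma)$, which is \emph{not} equal to $\mathrm{rk}\,\mathrm{DH}^1_{\mathrm{Sp}_{2n}} = 4n(g-1)$: already for $n=1$ one has $\mathrm{Sp}_2 = \check{G}$, so $\mathfrak{i}_a$ is an isomorphism and the conormal bundle vanishes, while $\mathrm{DH}^1$ has rank $4(g-1) > 0$. So $\wedge^\bullet N^\vee$ cannot be identified with $\wedge^\bullet \mathrm{DH}^1$, and no Serre-duality bookkeeping will repair this.

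The argument the paper runs (following Hitchin) is the one you relegate to a ``consistency check'' in your last paragraph, and it must carry the whole proof. One writes $A_{\mathrm{Dol}}(M_\rho)|_{\chi^{-1}(a)}$ as the direct sum over subsets $J \subseteq \Sigma \cap \Sigma_a$ of the Hecke translates $\mathbf{T}_J$ of the Cayley point; applying $\mathbf{S}_{\mathrm{Dol}}$, Theorem \ref{conj 1}, and the Hecke--Wilson compatibility \eqref{eqn Hecke Wilson compatibility} converts each translation into tensoring by the line bundle $\mathbf{W}_J = \bigotimes_{x \in J}\mathrm{ev}_x^*\mathrm{std}$ on $\omega(\mathrm{Prym}_{\mathrm{Sp}_{2n}}(\Sigma_a))$. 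The identification with $\bigoplus_k \wedge^k \mathrm{DH}^1[k]$ then comes from the fact that $\mathrm{DH}^1|_{\check{\chi}^{-1}(a)}$ is the cokernel of the universal Higgs field, i.e.\ a direct sum of line bundles indexed by the $4n(g-1)$ points of $\Sigma \cap \Sigma_a$, whose sheared exterior algebra is exactly $\bigoplus_J \mathbf{W}_J$. Your instinct that $\mathrm{DH}^1$ is ``built from the standard representation underlying the Hecke--Wilson operations'' is correct and is precisely the point; but it is the identification itself, not a globalization device layered on top of a Koszul computation that does not exist.
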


From the perspective of (BAA)/(BBB) mirror symmetry, the mirror to $\mathrm{PU}(n,n)$ proposed by Hitchin ought to be the dualizing sheaf of a certain relative Hitchin moduli space in the sense of \cite{CF}, which is naturally an ind-coherent sheaf on $\mathrm{Higgs}_{\check{G}}$.\footnote{Here, we restrict to the smooth locus $\mathrm{Higgs}_G^\diamond$ and regard ind-coherent sheaves on it as quasi-coherent sheaves.} We write, following the notation introduced after Theorem \ref{conj 1},
$$
    A_{\mathrm{Dol}}(M_\rho) := \pi_* \mathcal{O}(\mathrm{Higgs}_{G_\rho^\RR})
$$
for the pushforward of the structure sheaf along $\mathrm{Higgs}_{G_\rho^\RR} \to \mathrm{Higgs}_G$, and
$$
    B_{\mathrm{Dol}}(\check{M}_\rho) := \oplus_{k \geq 0} \, (\mathrm{Sym}^k\,  \mathrm{DH}_{\mathrm{Sp}_{2n}})[k]
$$
for the ``sheared" symmetric algebra of the Dirac--Higgs bundle. Note that on the locus of $\mathrm{Higgs}_{\mathrm{Sp}_{2n}}$ where $\mathrm{DH}^\bullet_{\mathrm{Sp}_{2n}}$ is concentrated in degree 1 (for instance in the $\diamond$-locus), the symmetric algebra should be understood as an exterior algebra, so $B_{\mathrm{Dol}}(\check{M}_\rho)^\diamond$ recovers Hitchin's description. In other words, we can rewrite Theorem \ref{conj 2} simply as
$$
    \mathbf{S}_{\mathrm{Dol}}: A_{\mathrm{Dol}}(M_\rho)^\diamond \longmapsto B_{\mathrm{Dol}}(\check{M}_\rho)^\diamond.
$$

\begin{rem}
    Again, in our present context the notation $\check{M}_\rho$ is purely formal, but in the sense of \cite{CF} we may take it to be the hyperspherical dual $\check{M}_\rho = T^*(\mathrm{Std}_{\mathrm{Sp}} \times^{\mathrm{Sp}_{2n}} \mathrm{SL}_{2n})$ of $M_\rho$. 
\end{rem}

We highlight three slight modifications from the literal statements of \cite{Hitchin2013} which should be well-known to experts:
\begin{itemize}
    \item (Moduli space v.s. moduli stack). The universal bundle $(\mathcal{E}, \varphi)$ does not exist on the moduli space of stable $\mathrm{Sp}_{2n}$-Higgs bundles, but only on $\mathrm{Higgs}_{\mathrm{Sp}_{2n}}$; for this reason, in \textit{loc. cit} Hitchin works with only \textit{even} exterior powers of $\mathrm{DH}_{\mathrm{Sp}_{2n}}$, as these vector bundles descend (on the stable locus) to the moduli space of stable $\mathrm{Sp}_{2n}$-Higgs bundles. Correspondingly, our $B_{\mathrm{Dol}}(\check{M}_\rho)^\diamond$ has double the rank of the bundle considered in \textit{loc. cit}. Dually, on the A-side Hitchin makes a restriction of considering only the components of even degree in $\mathrm{Higgs}_G$ for the purposes of applying the nonabelian Hodge correspondence. We consider all components, which results in double the amount of points in each Hitchin fiber supporting $A_{\mathrm{Dol}}(M_\rho)^\diamond$. 
    \item (Description away from the $\diamond$-locus). Since $B_{\mathrm{Dol}}(\check{M}_\rho)$ has infinite rank when restricted to the loci where $\mathrm{DH}^0_{\mathrm{Sp}_{2n}} \neq 0$, we expect that the Fourier--Mukai dual of $A_{\mathrm{Dol}}(M_\rho)$ outside the $\diamond$-locus could have infinite rank at these points. While our definition seems natural, to the best of our knowledge there is no technology with which one could verify Fourier--Mukai duality on this locus. 
    \item (Shearing). The exterior algebra of $\mathrm{DH}^1_{\mathrm{Sp}_{2n}}$ (which itself lives in cohomological degree 1) is concentrated in cohomological degrees in the interval $[0,\mathrm{rk} \, \mathrm{DH}^1_{\mathrm{Sp}_{2n}}]$, where the $k$th wedge power lives in degree $k$. Shearing simply moves the $k$th wedge power piece back to degree 0. 
\end{itemize}

We record a strategy to prove Hitchin's Theorem \ref{conj 2} following Section 7 of \cite{Hitchin2013}. Although the argument should be well-known to experts, we include the sketch here for completeness and since we appeal to its argument in formulating our Conjecture \ref{conj 3}.

\begin{proof}(Hitchin's Theorem \ref{conj 2} following \textit{loc. cit}.)
    Let $a \in (\mathcal{A}'_\rho)^\diamond$. Then the points in $\chi^{-1}(a) = \mathrm{Prym}_G(\Sigma_a)$ corresponding to $\mathrm{PU}(n,n)$-Higgs bundles can be obtained from the point $\mathrm{Cay}_{\mathrm{PU}(n,n)} \cap \chi^{-1}(a)$ (of maximal Toledo invariant) by Hecke modifcations along all possible subsets of points in $\Sigma_a \cap \Sigma$, of which there are $4n(g-1)$. In symbols, 
    \begin{equation} \label{eqn U(n,n) is Hecke acting on Cayley}
        A_{\mathrm{Dol}}(M_\rho')|_{\chi^{-1}(a)} = \bigoplus_{J \subset (\Sigma \cap \Sigma_a)} \, \mathbf{T}_J  \, (A_{\mathrm{Dol}}(M_\rho'))
    \end{equation}
    where $\mathbf{T}_J := \mathbf{T}_{x_1} \circ \cdots \circ \mathbf{T}_{x_{|J|}}$ is the composition of (commuting) Hecke operators at the points $\{x_1, \ldots, x_{|J|}\}$ in $J$. Applying $S$-duality, Hecke--Wilson compatibility \eqref{eqn Hecke Wilson compatibility} and Theorem \ref{conj 1}, we have
    \begin{equation}
        \mathbf{S}_{\mathrm{Dol}}(A_{\mathrm{Dol}}(M_\rho')|_{\chi^{-1}(a)}) = \bigoplus_J \, \mathbf{W}_J \otimes B_{\mathrm{Dol}}(\check{M}_\rho')|_{\check{\chi}^{-1}(a)}.
    \end{equation}
    where $\mathbf{W}_J := \mathbf{W}_{x_1} \circ \cdots \circ \mathbf{W}_{x_{|J|}}$ is the composition of (commuting) Wilson operators at the points in $J$. 
    
    Note that $\mathrm{DH}^1_{\mathrm{Sp}_{2n}}$ restricted to the Hitchin fiber $\check{\chi}^{-1}(a)$ in $\mathrm{Higgs}_{\check{G}}^\diamond$ can be identified with the cokernel of the universal Higgs field in cohomological degree 1: this is a direct sum of line bundles, one for each point in $\Sigma \cap \Sigma_a$. Taking exterior algebra and shearing, we obtain a direct sum over subsets $J \subset \Sigma \cap \Sigma_a$ where the summand indexed by $J$ is the tensor product over points in $J$ of the cokernel-line bundles. Thus, we recognize that $\bigoplus_J \, \mathbf{W}_J \simeq B_{\mathrm{Dol}}(\check{M}_\rho)|_{\check{\chi}^{-1}(a)}$ when restricted to $\check{\chi}^{-1}(a)$ which allows us to conclude.
\end{proof}

\begin{rem}
    Similar results were obtained in \cite{HameisterMorrissey2024} and \cite{HameisterEtAl2024} for more general symmetric pairs (among many other examples), via an analysis of a certain intermediate \textit{regular quotient} of the stack quotient $[M_\rho/G]$ and the coarse quotient $M_\rho\sslash G$. The regular quotient contains non-separated loci which, upon considering $K^{1/2}$-twisted maps from the curve, correspond to the $2^{4n(g-1)}$ possible Hecke modifications.
\end{rem}

\subsection{Mirror of the Cayley morphism}

We arrive finally at our proposal for the Fourier--Mukai dual to the Cayley morphism. We shall precisely state our expectations when the magical real form is \textit{tempered} (to be defined below), and prove the case of $G^\RR = \mathrm{PU}(n,n)$; this case follows quickly from the previous calculations of Theorems \ref{conj 1} and \ref{conj 2}, and we expect that those Cayley morphisms attached to tempered real forms should be approachable by similar techniques.

Recall that for a symmetric $G$-variety $X$ (or more generally a spherical $G$-variety $X$), one can attach a \textit{Nadler/Gaitsgory--Nadler/spherical-dual group} \cite{Nadler, Gaitsgory-Nadler, Knop-Schalke, Sakellaridis-Venkatesh} $\check{G}_X$ with a morphism to $\check{G}$. 

\begin{defn}
    Let $\rho$ be a magical $\mathfrak{sl}_2$-triple in $\g$, and let $H \subset G$ be the complexification of the maximal compact subgroup of the real form $G_\rho^\R$. We say that $\rho$ is \textit{tempered} if the spherical dual group $\check{G}_\rho$ of the symmetric variety $X_\rho = H \backslash G$ admits no commuting $\mathfrak{sl}_2$-triples in $\check{\g}$.
\end{defn}
By slight abuse of terminology, we will also say that a real form, or a Cayley morphism is tempered if they arise from a tempered magical triple.

\begin{rem}
    Usually, the spherical dual group of a spherical $G$-variety $X$ is denoted by $\check{G}_X$ in the literature. Here we opt to emphasize the dependence on the magical triple $\rho$ in the notation instead. 
\end{rem}
\begin{rem}
    The terminology of temperedness originates from automorphic literature as a measure of growth of certain automorphic forms. Slightly more precisely, those spherical varieties $X$ which are tempered should distinguish (in the sense of Plancherel measure, see Section 16 of \cite{Sakellaridis-Venkatesh}) those automorphic forms whose Arthur parameter has vanishing Arthur-$\mathrm{SL}_2$. Roughly speaking, this condition corresponds to the automorphic form being ``as close to $L^2$ as possible". 
\end{rem}

We expect that the Fourier--Mukai duals of Cayley morphisms in the tempered case should be induced by the projection to the zero section of a certain Dirac--Higgs bundle, whose support is the Hitchin moduli stack for the Nadler/spherical dual group. In order to spell out these expectations more precisely, we tabulate here the list of tempered magical triples, following the classification of \cite{Cayley} (see Theorem \ref{thm:classification-canonical-real-forms}), including their Nadler/spherical dual groups $\check{G}_\rho$ (up to isogeny).

\begin{center}
\begin{table}[h!] \label{table hermitian tube type}
\centering
\caption{Tempered magical triples}
\label{tab:tempered}
\small 
\renewcommand{\arraystretch}{1.2} 
\begin{tabular}{c|c|c|c|c} 
\Xhline{3\arrayrulewidth} 
Type & Magical type &\textbf{$\g^\R_\rho$} & \textbf{$\mathfrak{h}$} & $\check{\g}_\rho \,(\subset \check{\g})$ \\
\Xhline{3\arrayrulewidth} 
All & (1) & split real form &  & $\check{\g}$  \\

\Xhline{2\arrayrulewidth} 
$\mathrm{A}_{2n-1}$ & (2) & $\mathfrak{su}(n,n)$ & $\mathfrak{s}(\mathfrak{gl}_n \times \mathfrak{gl}_n)$ &  $\mathfrak{sp}_{2n} \, (\subset \mathfrak{sl}_{2n})$  \\

\Xhline{2\arrayrulewidth} 
$\mathrm{B}_n$ & (3)$\ast$ & $\mathfrak{so}(n,n+1)$ & $\mathfrak{so}_n \times \mathfrak{so}_{n+1}$ & $ \mathfrak{sp}_{2n} \, (\subset \mathfrak{sp}_{2n})$   \\

\Xhline{2\arrayrulewidth} 
$\mathrm{C}_n$ & (2)$\ast$ & $\mathfrak{sp}_{2n}\RR$ & $\mathfrak{gl}_n$ &  $\mathfrak{so}_{2n+1} \, (\subset \mathfrak{so}_{2n+1})$\\

\Xhline{2\arrayrulewidth} 
$\mathrm{D}_{n+1}$ & (3) & $\mathfrak{so}(n,n+2)$ & $\mathfrak{so}_n \times \mathfrak{so}_{n+2}$ & $\mathfrak{so}_{2n+1} \, (\subset \mathfrak{so}_{2n+2})$   \\

\Xhline{2\arrayrulewidth} 
$\mathrm{D}_n$ & (3)$\ast$ & $\mathfrak{so}(n,n)$ & $\mathfrak{so}_n \times \mathfrak{so}_n$ & $\mathfrak{so}_{2n} \, (\subset \mathfrak{so}_{2n})$   \\

\Xhline{2\arrayrulewidth} 
$\mathrm{E}_6$ & (4) & $\mathfrak{e}_{6(2)}$ & $\mathfrak{sl}_6 \times \mathfrak{sl}_2$ & $\mathfrak{f}_4 \, (\subset \mathfrak{e}_6)$   \\

\Xhline{2\arrayrulewidth} 
 $\mathrm{F}_4$ & (4)$\ast$ & $\mathfrak{f}_{4(4)}$ & $\mathfrak{sp}_6 \times \mathfrak{sl}_2$ & $\mathfrak{f}_4 \, (\subset \mathfrak{f}_4)$  \\

\Xhline{3\arrayrulewidth} 
\end{tabular}
\end{table}
\end{center}
The notation used in the above table is as follows:
\begin{itemize}
    \item $\g^\R_\rho$: the (Lie algebra of the) real form of $G$ associated to $\rho$. 
    \item $\mathfrak{h}$: the complexification of the maximal compact subalgebra of $\g^\R_\rho$.
    \item $\check{\g}_\rho$: the (Nadler/spherical)-dual group of the symmetric spherical $G$-variety $X_\rho = H \backslash G$ (see Section 4.2 of \cite{BZSV} for an explicit construction).
    \item The magical types marked with an asterisk $\ast$ are those appearing in Section \ref{subsect factorization}: these real forms admit a magical triple of type (1) as well. The expectation for these types have to be modified slightly.  
\end{itemize}

In terms of BZSV triples and relative Langlands duality, Table \ref{tab:tempered} can be viewed as a list of \textit{tempered hyperspherical dual Hamiltonian actions} (see Section 5 of \textit{op. cit}):
$$G \acts M_\rho = \mathrm{WInd}_H^G(\mathrm{pt}) \overset{\text{hyp.sph. dual}}{\longleftrightarrow} \check{G} \acts \check{M}_\rho = \mathrm{WInd}_{\check{G}_\rho, \mathrm{triv}}^{\check{G}}(S_\rho).$$ 
where $S_\rho$ is a certain symplectic representation of $\check{G}_\rho$ (see Sections 4.3 and 4.4 of \textit{loc. cit} for its definition). Note that not all tempered magical triples lead to hyperspherical varieties: the only condition which could fail is that of \textit{connected generic stabilizers} (this occurs, for instance, for the principal magical triple in type $\mathrm{A}$). While we expect that these exceptions can be resolved eventually (by allowing Deligne--Mumford stacks to be its hyperspherical dual), we will exclude them for the present discussion.

\begin{rem}
    Under the magical assumption, we see that a real form is tempered if and only if it is quasi-split. Furthermore, it is \textit{strongly tempered} (meaning that the Nadler/spherical dual group is the full Langlands dual group) if and only if it is split. 
\end{rem}

Suppose $S_\rho$ is a representation of \textit{cotangent type}, i.e., that $S_\rho \simeq V_\rho \oplus V_\rho^*$ for some representation $V_\rho$ of $\check{G}_\rho$. We write, for those $M_\rho$ appearing as the recipient of a tempered Cayley morphism, 
$$B_{\mathrm{Dol}}(\check{M}_\rho) := (\mathrm{Sym}^k \, \mathrm{DH}_{V_\rho})[k] \in \mathrm{QC}(\mathrm{Higgs}_{\check{G}}),$$
the sheared symmetric algebra of the Dirac Higgs bundle with fiber $V_\rho$ viewed as a quasi-coherent sheaf over $\mathrm{Higgs}_{\check{G}_\rho}$ pushed-forward to $\mathrm{Higgs}_{\check{G}}$. Here, the Dirac--Higgs bundle is defined as in Section \ref{subsect symplectic DH bundle}:
$$\mathrm{DH}^\bullet_{V_\rho} := R\Gamma^\bullet(\Sigma, V_{\rho,\mathcal{E}} \overset{\varphi}{\longrightarrow} V_{\rho, \mathcal{E}} \otimes K),$$
where $(\mathcal{E},\varphi)$ is the universal $\check{G}_\rho$-Higgs bundle. 

\begin{conj} \label{conj 3}
    Let $\rho: \mathfrak{sl}_2 \to \g$ be a tempered magical $\mathfrak{sl}_2$-triple. Assume that 
    \begin{itemize}
        \item $M_\rho$ is hyperspherical (equiv. $X_\rho$ has connected generic stabilizers), and that
        \item the representation $S_\rho \simeq V_\rho \oplus V_\rho^*$ of $\check{G}_\rho$ is of cotangent type.
    \end{itemize} 
    Let $G$ be the adjoint group with Lie algebra $\g$, and let $\check{G}$ be its Langlands dual group. Regard the Cayley morphism for $G^\RR_\rho$ as a morphism of sheaves
    $$
        A_{\mathrm{Dol}}(\phi):A_{\mathrm{Dol}}(M_\rho) \longrightarrow A_{\mathrm{Dol}}(M_\rho') \in \mathrm{QC}(\mathrm{Higgs}_G).
    $$
    Then we have the following pieces of matching data under $\mathbf{S}_{\mathrm{Dol}}$, over the $\diamond$-locus:
    \begin{enumerate}
        \item The $\mathbf{S}_{\mathrm{Dol}}$-dual to $A_{\mathrm{Dol}}(M_\rho')$ is $B_{\mathrm{Dol}}(\check{M}_\rho') = \omega(\mathrm{Higgs}_{\check{G}_\rho})$ (in the $\ast$-cases, a finite rank vector bundle $W_\rho$ tensored with $\omega(\mathrm{Higgs}_{\check{G}_\rho})$).
        \item The $\mathbf{S}_{\mathrm{Dol}}$-dual to $A_{\mathrm{Dol}}(M_\rho)$ is $B_{\mathrm{Dol}}(\check{M}_\rho) = \oplus_{k \geq 0} (\mathrm{Sym}^k \, \mathrm{DH}_{V_\rho})[k]$, the sheared symmetric algebra of the Dirac--Higgs bundle with fiber $V_\rho$ (tensored with $W_\rho$ in the $\ast$-cases).
        \item The $\mathbf{S}_{\mathrm{Dol}}$-dual to $A_{\mathrm{Dol}}(\phi)$ is the natural morphism
   $$
        B_{\mathrm{Dol}}(\check{M}_\rho) \longrightarrow B_{\mathrm{Dol}}(\check{M}_\rho')
    $$
    induced by the projection to the zero section of the Dirac--Higgs bundle
    \begin{equation} \label{eqn projection of bundle}
       \mathrm{DH}_{V_\rho} \longrightarrow \mathrm{Higgs}_{\check{G}_\rho} 
    \end{equation}
    (and tensoring with $W_\rho$ in the $\ast$-cases).
    \end{enumerate}

\end{conj}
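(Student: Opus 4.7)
Parts (1) and (2) of the conjecture are established by Theorems \ref{conj 1} and \ref{conj 2} in the $\mathrm{PU}(n,n)$ case, so the task is to identify the $\mathbf{S}_{\mathrm{Dol}}$-dual of the Cayley morphism $A_{\mathrm{Dol}}(\phi): A_{\mathrm{Dol}}(M_\rho) \to A_{\mathrm{Dol}}(M_\rho')$ with the zero-section-induced morphism $B_{\mathrm{Dol}}(\check{M}_\rho) \to B_{\mathrm{Dol}}(\check{M}_\rho')$. The plan is to work over the diamond locus of the common Hitchin base, reduce to an explicit combinatorial identification on Hitchin fibers via BNR, and then promote this fiberwise picture to a morphism of sheaves.

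First I would revisit the Hecke-theoretic description of $A_{\mathrm{Dol}}(M_\rho)$ extracted in the proof of Theorem \ref{conj 2}. For $a \in (\mathcal{A}'_\rho)^\diamond$, the $\mathrm{PU}(n,n)$-Hitchin fiber splits as
\begin{equation*}
A_{\mathrm{Dol}}(M_\rho)|_{\chi^{-1}(a)} \simeq \bigoplus_{J \subseteq \Sigma \cap \Sigma_a}\mathbf{T}_J\, A_{\mathrm{Dol}}(M_\rho')|_{\chi^{-1}(a)},
\end{equation*}
where each $\mathbf{T}_J$-translate is a distinct component of the fiber (indexed by Toledo invariant) and the $J = \emptyset$ summand is the maximal-Toledo Cayley component itself. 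The key observation is that $A_{\mathrm{Dol}}(\phi)$, being contravariantly induced by the inclusion $\mathrm{Cay}_{G,\rho} \hookrightarrow \mathrm{Higgs}_{\mathrm{PU}(n,n)}$ of a union of connected components (cf. Theorem \ref{thm main 3}), must be the projection onto the $J = \emptyset$ summand.

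Next I would apply $\mathbf{S}_{\mathrm{Dol}}$ and use Hecke--Wilson compatibility \eqref{eqn Hecke Wilson compatibility} together with Theorem \ref{conj 1} to convert Hecke translates into Wilson tensorizations:
\begin{equation*}
\mathbf{S}_{\mathrm{Dol}}\!\left(A_{\mathrm{Dol}}(M_\rho)|_{\chi^{-1}(a)}\right) \simeq \bigoplus_{J \subseteq \Sigma \cap \Sigma_a}\mathbf{W}_J \otimes B_{\mathrm{Dol}}(\check{M}_\rho')|_{\check{\chi}^{-1}(a)}.
\end{equation*}
Over the diamond locus, $\mathrm{DH}^\bullet_{V_\rho}$ is concentrated in degree $1$ with fiber at $a$ decomposing as $\bigoplus_{x\in \Sigma\cap\Sigma_a}L_x$ for line bundles $L_x$, so the sheared symmetric algebra $\bigoplus_k(\mathrm{Sym}^k \mathrm{DH}_{V_\rho})[k]$ collapses to the exterior algebra $\bigoplus_J \bigotimes_{x \in J}L_x$; matching with $\mathbf{W}_x = \mathrm{ev}_x^*\mathrm{std}_{\check{G}}$ yields the identification $\bigoplus_J\mathbf{W}_J\otimes B_{\mathrm{Dol}}(\check{M}_\rho') \simeq B_{\mathrm{Dol}}(\check{M}_\rho)$, reproducing the second bullet. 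Under this identification, projection onto the $J = \emptyset$ piece $\mathrm{Sym}^0 \mathrm{DH}_{V_\rho} \simeq \mathcal{O}$ is precisely the augmentation induced by the zero section $s: \mathrm{Higgs}_{\check{G}_\rho} \to \mathrm{DH}_{V_\rho}$, i.e., the morphism $\pi_*\mathcal{O}_{\mathrm{DH}_{V_\rho}} \to \pi_*s_*\mathcal{O}_{\mathrm{Higgs}_{\check{G}_\rho}} \simeq \mathcal{O}_{\mathrm{Higgs}_{\check{G}_\rho}}$, completing the $\mathrm{PU}(n,n)$ case after twisting by the relative dualizing sheaf to land in $\omega(\mathrm{Higgs}_{\mathrm{Sp}_{2n}})$.

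The main technical obstacle is the canonicity of the fiberwise identifications: while the decomposition of each Hitchin fiber into Hecke translates is canonical component by component, one must verify that the identifications glue to a morphism of sheaves over all of $\mathcal{A}^\diamond$ matching the global Cayley morphism on one side and the global zero-section morphism on the other. A natural route is to characterize both morphisms by their Hecke- and Wilson-equivariance data and invoke a uniqueness argument on the smooth abelian family $\mathrm{Higgs}_G^\diamond/\mathcal{A}^\diamond$; alternatively one may appeal to the regular-quotient analysis of \cite{HameisterMorrissey2024, HameisterEtAl2024} to package the fiberwise statement globally. Extending beyond $\mathrm{PU}(n,n)$ to general tempered magical triples requires replacing the standard-representation Hecke decomposition with one governed by the representation $V_\rho$ of $\check{G}_\rho$---precisely the structure encoded by $\mathrm{DH}_{V_\rho}$---which is the principal remaining conceptual hurdle.
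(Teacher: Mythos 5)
Your proposal follows essentially the same route as the paper's own (very terse) proof: parts (1) and (2) are delegated to Theorems \ref{conj 1} and \ref{conj 2}, and part (3) is deduced by identifying the Cayley morphism with projection onto the $J=\emptyset$ summand of the Hecke decomposition \eqref{eqn U(n,n) is Hecke acting on Cayley} and transporting this through Hecke--Wilson compatibility to the corresponding Wilson summand $\mathbf{W}_{\emptyset} \otimes B_{\mathrm{Dol}}(\check{M}_\rho')$. Your write-up is in fact somewhat more complete than the paper's, which does not spell out the identification of the $J=\emptyset$ projection with the zero-section augmentation, nor the gluing of the fiberwise identifications over $\mathcal{A}^\diamond$ that you correctly flag as the remaining technical point.
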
 
\begin{proof}(for $G^\RR = \mathrm{PU}(n,n)$.) Point (1) is exactly Theorem \ref{conj 1} and point (2) is the proof of Hitchin's Theorem \ref{conj 2}, so it remains to deduce point (3).

    From equation \eqref{eqn U(n,n) is Hecke acting on Cayley}, we see that $A_{\mathrm{Dol}}(M_\rho')$ corresponds to projection to the summand with $J = \emptyset$ is the empty set. Under $\mathbf{S}_{\mathrm{Dol}}$, this corresponds to projection to the summand $\mathbf{W}_{\emptyset} \otimes B_{\mathrm{Dol}}(M_\rho') \simeq \omega(\mathrm{Higgs}_{\mathrm{Sp}_{2n}})^\diamond$ 
\end{proof}

We end with some important remarks and observations about the above conjecture.

\begin{rem}
    In the $\ast$-cases, the appearance of the extra vector bundle $W_\rho$ is not mysterious. Note from Table \ref{tab:tempered} that in the cases of Type $\mathrm{B}_n(3), \mathrm{C}_n(2), \mathrm{D}_n(3),$ and $\mathrm{F}_4(4)$, the Nadler dual group is the full Langlands dual group (i.e., they are \textit{strongly tempered}). Without modifying by the finite rank vector bundle $W_\rho$, Conjecture \ref{conj 3} cannot be true, since $\omega(\mathrm{Higgs}_{\check{G}_\rho}) = \omega(\mathrm{Higgs}_{\check{G}})$ must be $\mathbf{S}_{\mathrm{Dol}}$-dual to the Hitchin section. 

    On the other hand, from Section \ref{subsect factorization} we learned that the Hitchin components (i.e., the principal Cayley components) form a subset of Cayley components. Intersecting with a Hitchin fiber in the $\diamond$-locus, we expect to be able to Hecke-modify to obtain all of the Cayley points from the Hitchin ones. The Fourier--Mukai dual to this operation leads to the finite rank (hyperholomorphic) vector bundle $W_\rho$. 
\end{rem}

\begin{rem}
Part (2) of Conjecture \ref{conj 3} is closely related to Conjecture 1.9 of \cite{HameisterEtAl2024}, which they have also proven in the $G^\R = \mathrm{PU}(n,n)$-case (Theorem 1.1 ``Friedberg--Jacquet case" of \textit{op. cit}). Part (1) of our Conjecture can be viewed as a natural extension of relative Langlands duality in the Dolbeault form to the twisted polarized setting (which includes, but is more general than, those Cayley spaces that appear here). 
\end{rem}

\begin{rem}
    It is tempting to formulate Conjecture \ref{conj 3} in the non-tempered case. This is not impossible, as the relevant (BBB)-branes $B_{\mathrm{Dol}}(\check{M}_\rho)$ and $B_{\mathrm{Dol}}(\check{M}_\rho)$ can be constructed (see Section 5 of \cite{CF}). However, it seems that in the non-tempered case both the Cayley morphism and its proposed mirrors will have support in the complement of the $\diamond$-locus. Thus, to the best of our knowledge, one cannot perform a direct Fourier--Mukai duality check for these hypothetical mirror pairs.
\end{rem}

\subsection{$S$-duality in codimension 2}
Let us contextualize our Conjecture \ref{conj 3} within the relative Langlands program and recent works surrounding the topic, focusing on the case when $G^\RR_\rho = \mathrm{PU}(n,n)$. On the one hand, $A_{\mathrm{Dol}}(\phi)$ is induced by the morphism $\phi$ of Hamiltonian spaces (a special case of Proposition \ref{prop morphism of actions}) 
\begin{equation}\label{eqn morphism 1}
    \phi: [M'_\rho/G] \longrightarrow [M_\rho/G].
\end{equation}
On the other hand, following the constructions of \cite{CF} we have
$$
    \check{M}_\rho = T^*(\mathbf{A}^{2n} \times^{\mathrm{Sp}_{2n}} \check{G}) \,  \text{ and } \, \check{M}_\rho' = T^*(\mathrm{Sp}_{2n} \backslash \check{G})
$$
and the morphism \eqref{eqn projection of bundle} is induced by (choosing a $\check{G}$-stable polarization and) the natural morphism 
\begin{equation} \label{eqn morphism 2}
    \check{\phi}:  [\mathbf{A}^{2n} \times^{\mathrm{Sp}_{2n}} \check{G}/\check{G}] \longrightarrow [\mathrm{Sp}_{2n} \backslash \check{G}/\check{G}]
\end{equation}
of projection to the zero section of a $\check{G}$-equivariant vector bundle.  

From the perspective of functorial field theory developed in Remarks \ref{rem GL nonsense 1}, \ref{rem AKSZ}, and \ref{rem GL nonsense 2}, we are proposing that in the hypotehtical category $\mathfrak{B}_{\mathrm{GL}}$, the following two 2-morphisms are $S$-dual to each other:
\begin{equation}
\begin{tikzcd}
	& {\mathcal{L}_\phi} &&& {\mathcal{L}_{\check{\phi}}} \\
	{M_\rho'} && {M_\rho} & {\check{M}_\rho'} && {\check{M}_\rho}
	\arrow[from=1-2, to=2-1]
	\arrow[from=1-2, to=2-3]
	\arrow[from=1-5, to=2-4]
	\arrow[from=1-5, to=2-6]
\end{tikzcd}
\end{equation}
where $\mathcal{L}_\phi$ is the equivariant Lagrangian correspondence obtained by taking the graph of $\phi$, while $\mathcal{L}_{\check{\phi}}$ is the equivariant Lagrangian correspondence induced by the polarized morphism $\check{\phi}$:
\begin{equation} \mathcal{L}_{\check{\phi}} = \left\{(x,\check{\phi}(x); d\check{\phi}(\xi), \xi): \begin{matrix}x \in \mathbf{A}^{2n} \times^{\mathrm{Sp}_{2n}} \check{G}, \text{ and }\\ \xi \in T^*_{\check{\phi}(x)}(\mathrm{Sp}_{2n} \backslash\check{G})\end{matrix}\right\}.
\end{equation}

Let us explain, to the best of our knowledge, the status of the statement of our proposal. First of all, the pairs of Hamiltonian actions
$$
    (G \acts M_\rho) \longleftrightarrow (\check{G} \acts \check{M}_\rho)
$$
$$
    (G \acts M_\rho') \longleftrightarrow (\check{G} \acts \check{M}_\rho')
$$
are known to be hyperspherical dual in the sense of \cite{BZSV}, hence our Theorem \ref{conj 1} and the well-known Theorem \ref{conj 2} of Hitchin complete half of\footnote{Indeed, we only consider the A-twist for those Hamiltonian actions on the left, and B-twist for those Hamiltonian actions on the right. One should be able to switch the A-twist and the B-twist and obtain two more pairs of (BAA)/(BBB) mirrors.} the relative Langlands duality attached to these hyperspherical dual pairs in the Dolbeault setting. On the other hand, hyperspherical duality is not a functor, so at the moment of writing one cannot mathematically formulate the statement that ``$\mathcal{L}_\phi$ is the hyperspherical dual $\mathcal{L}_{\check{\phi}}$". 

Nonetheless, we have learned from Hiraku Nakajima that the above two pairs of Hamiltonian actions can be shown to be $S$-dual in the mathematical sense as defined in \cite{Nakajima} (which is expected to coincide with hyperspherical duality whenever the latter applies), if one assumes natural compatibility of the $S$-dual construction in \textit{op. cit} with symplectic reductions. This latter construction is functorial for certain nice morphisms, and it would be interesting to investigate whether \eqref{eqn morphism 2} is $S$-dual to \eqref{eqn morphism 1} in this sense. 

Finally, we mention two more mathematical checks of our proposal in the ``de Rham" setting. While these statements will not directly imply nor be implied by the ``Dolbeault" form of our proposal, it does lend credence to the fact that, if one is able to construct a piece of $\mathfrak{B}_{\mathrm{GL}}$, and that $S$-duality does extend to certain 2-morphisms, then our proposal should be valid. 

\begin{itemize}
    \item (de Rham form, over a global curve, $n=1$). The left hand side column of \eqref{eqn intro S-dual of Cayley diagram 2} under the ``de Rham A-twist" becomes a morphism of BZSV's period sheaves attached to the Hamiltonian $G$-actions $M_1$ and $M_2$. When $n = 1$ the period sheaf attached to $M_1$ is precisely the Whittaker sheaf, which forms a direct summand of the upper left corner term of Theorem 7.4.2 of \cite{Feng-Wang}. The period sheaf attached to $M_2$ is precisely $\mathcal{P}_X$ in the same diagram of \textit{loc. cit} (upon passing to the adjoint form of the group). We find our corresponding summand of the BZSV $L$-sheaf attached to $\check{M}_1$ as the $\bullet = 0$ summand of $\mathrm{Loc}^{\mathrm{spec}}(\mathrm{Fact}(\mathrm{Sym}^\bullet \mathrm{Std}))^\fltns$ (which is simply the dualizing sheaf of $\mathrm{Loc}_{\check{G}}$), and  $\mathcal{L}_{\check{X}}$ in \textit{loc. cit} is precisely the BZSV $L$-sheaf attached to $\check{M}_2$. 
    \item (Nakajima's $S$-duality). Appealing to the expected involutivity of $S$-duality, we may apply the definition of \cite{Nakajima} to $\check{M}_1$ and $\check{M}_2$ (both of which are polarized). Using the fact that the definition of \textit{op. cit} is functorial with respect certain morphisms, one can compute a tentative $S$-dual to $\mathcal{L}_{\check{\phi}}$, which coincides with the Cayley morphism $\mathcal{L}_\phi$. 
\end{itemize}
A more detailed explanation of these observations in the de Rham setting will appear in subsequent work.

\appendix
\section{Lagrangian morphisms and Lagrangian submanifolds}
\label{appendix}

Let $G$ be a semisimple algebraic group whose center we denote by $Z_G$, and only for the appendix, we assume that the genus $g(\Sigma)$ of our curve is at least 2. 

Let $\mathfrak{M}(G) = \mathrm{Higgs}_G$ be the moduli stack of $G$-Higgs bundles on our fixed algebraic curve $\Sigma$. The assumption that $G$ is semisimple (rather than more generally reductive) should not be essential, but it does simplify matters slightly for us in the following way. Write $\mathfrak{M}(G)^{\mathrm{st}} \subset \mathfrak{M}(G)$ for the open substack of stable $G$-Higgs bundles, whose automorphism group reduces to the center. Then semisimplicity of $G$ ensures that $\mathfrak{M}(G)^{\mathrm{st}}$ is a (classical) Deligne--Mumford stack, in fact a $Z_G$-gerbe over its coarse moduli space $\mathcal{M}(G)$, the moduli space of stable $G$-Higgs bundles.

\subsection{Classicality of the Cayley space}

Let $\mu: \mathfrak{L}(M) = \mathrm{Lag}(M) \to \mathfrak{M}(G)$ be Gaiotto's Lagrangian attached to some Hamiltonian $G$-space $M$. We are interested in the passage from $\mathfrak{M}(G)$ to the moduli scheme of stable $G$-Higgs bundles $\mathcal{M}(G)$, and most importantly, what happens to $\mathfrak{L}(M)$ in this passage. 
\begin{lem} \label{lem Lagrangian is representable}
    The morphism $\mu: \mathfrak{L}(M) \to \mathfrak{M}(G)$ is representable.
\end{lem}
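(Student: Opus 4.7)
The plan is to prove representability by unwinding the definition of $\mathrm{Lag}^L(M)$ as a relative section space, leveraging the standing assumption that $M$ is smooth affine.

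First, the morphism of quotient stacks $[M/G] \to [\g^*/G]$ induced by the moment map is representable by schemes: base-changing along the smooth atlas $\g^* \to [\g^*/G]$ recovers $M \to \g^*$, which is simply a $G$-equivariant morphism of affine schemes, and representability by schemes can be verified after pullback along a smooth cover. The same argument applies verbatim to the graded version $[M/G \times \Ggr] \to [\g^*/G \times \Ggr]$, which is the more precise ambient setting for Gaiotto's construction.

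Next, we propagate representability upon taking $L^{1/2}$-twisted sections. Given a test scheme $T$ and a $T$-point of $\mathrm{Higgs}_G^L$ corresponding to a family of $L$-twisted $G$-Higgs bundles $(E,\phi)$ on $\Sigma$, I claim the base change unwinds to
\begin{equation}
\mathrm{Lag}^L(M) \times_{\mathrm{Higgs}_G^L} T \;\simeq\; \mathrm{Sect}_T\!\big(\Sigma \times T,\, Z\big), \qquad Z := M_E(L^{1/2}) \times_{\g_E(L)} \{\phi\}.
\end{equation}
Since $M$ is affine and $G$-equivariant, the associated twisted scheme $M_E(L^{1/2})$ is affine over $\Sigma \times T$, and the closed subscheme $Z$ obtained by prescribing the moment map value to be $\phi$ remains affine over $\Sigma \times T$. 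Writing $Z = \underline{\mathrm{Spec}}_{\Sigma\times T}(\mathcal{A})$ for a quasi-coherent $\mathcal{O}_{\Sigma\times T}$-algebra $\mathcal{A}$, sections of $Z \to \Sigma \times T$ over $T$ correspond to $\mathcal{O}_{\Sigma \times T}$-algebra maps $\mathcal{A} \to \mathcal{O}_{\Sigma \times T}$, and these are parametrized by the relative affine scheme $\underline{\mathrm{Spec}}_T(\mathrm{Sym}_{\mathcal{O}_T}\, p_{T,*}\mathcal{A})$, where $p_T: \Sigma \times T \to T$ is the projection (using that $p_{T,*}\mathcal{A}$ is a quasi-coherent $\mathcal{O}_T$-module by properness of $\Sigma$). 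This exhibits the fiber as a (relative) affine $T$-scheme, giving representability.

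There is essentially no hard step: the key structural ingredient is that affineness of $M$ survives all the intermediate twisting and base-changing operations, so Weil-restriction style arguments suffice. The only mild bookkeeping concern is the $L^{1/2}$-twist, which is handled by writing $\mathrm{Lag}^L(M)$ and $\mathrm{Higgs}_G^L$ as fibers of the corresponding ungraded mapping stacks over the fixed point $L^{1/2} \in \mathrm{Pic}(\Sigma)$; representability of the map between the fibers is inherited from the representability of the map between the ambient graded section stacks, to which the argument above applies.
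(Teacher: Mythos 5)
Your strategy is the same as the paper's, which disposes of the lemma in two lines: reduce to the representability of the target morphism $[M/G\times\Ggr]\to[\g/G\times\Ggr]$ (immediate, since pulling back along the atlas $\g^*\to[\g^*/G\times\Ggr]$ recovers the map of schemes $M\to\g^*$) and then invoke the fact that mapping/section stacks out of the proper curve $\Sigma$ preserve representability of the target morphism. Your handling of the $L^{1/2}$-twist by passing to fibers of the graded mapping stacks over $L^{1/2}\in\mathrm{Pic}(\Sigma)$ is also exactly the paper's reduction.

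The one step that fails as written is the displayed formula $\underline{\mathrm{Spec}}_T(\mathrm{Sym}_{\mathcal{O}_T}\,p_{T,*}\mathcal{A})$. Its $T'$-points are $\mathcal{O}_{T'}$-\emph{module} maps $(p_{T,*}\mathcal{A})|_{T'}\to\mathcal{O}_{T'}$, which is not what you want for two separate reasons: (i) sections of $Z$ are \emph{algebra} maps, not linear functionals, and (ii) even at the level of module maps the adjunction runs the wrong way --- $p_{T,*}$ is right adjoint to $p_T^*$, so $\mathrm{Hom}_{\mathcal{O}_T}(p_{T,*}\mathcal{A},\mathcal{O}_T)$ does not compute $\mathrm{Hom}_{\mathcal{O}_{\Sigma\times T}}(\mathcal{A},\mathcal{O}_{\Sigma\times T})$, and $p_{T,*}$ does not commute with base change in $T$. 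The standard repair: choose a $G\times\Ggr$-equivariant closed embedding $M\hookrightarrow V$ into a finite-dimensional representation (possible since $M$ is affine of finite type and $G\times\Ggr$ is reductive), so that $Z$ becomes a closed subscheme of the total space of the vector bundle $V_E(L^{1/2})$ on $\Sigma\times T$; the section space of a vector bundle along the proper flat map $p_T$ is the affine $T$-scheme $\underline{\mathrm{Spec}}_T(\mathrm{Sym}\,\mathcal{Q})$, where $\mathcal{Q}$ is the coherent sheaf furnished by cohomology and base change (EGA III 7.7.8), and sections constrained to land in $Z$ cut out a closed subscheme. With that substitution (and noting that in the derived setting the fibers are derived affine schemes, which is all that representability requires), your argument goes through and in fact supplies more detail than the paper records.
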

\begin{proof}
    Since $\mathfrak{L}(M)$ is constructed out of base change from the morphism 
    \begin{equation}\label{eqn nontwisted mapping stacks}
       \widetilde{\mu}: \mathrm{Map}(\Sigma, [M/G \times \Ggr]) \longrightarrow \mathrm{Map}(\Sigma, [\g/G \times \Ggr])
    \end{equation}
    we may reduce to showing that $\widetilde{\mu}$ is representable. Since the latter is a mapping stack with target the representable morphism $[M/G \times \Ggr] \to [\g/G \times \Ggr]$, its representability is also immediate. 
\end{proof}

\begin{cor}
    $\mathfrak{L}(M)|_{\mathfrak{M}(G)^{\mathrm{st}}}$ is a (possibly derived) Deligne--Mumford stack.
\end{cor}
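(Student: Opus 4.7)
The plan is to deduce the corollary directly from Lemma \ref{lem Lagrangian is representable} together with the hypothesis on $\mathfrak{M}(G)^{\mathrm{st}}$, using stability of the Deligne--Mumford property under representable morphisms.

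First, I would note that the restriction $\mathfrak{L}(M)|_{\mathfrak{M}(G)^{\mathrm{st}}}$ is defined as the (derived) fiber product
\begin{equation*}
    \mathfrak{L}(M)|_{\mathfrak{M}(G)^{\mathrm{st}}} := \mathfrak{L}(M) \times_{\mathfrak{M}(G)} \mathfrak{M}(G)^{\mathrm{st}},
\end{equation*}
so the projection to $\mathfrak{M}(G)^{\mathrm{st}}$ is the base change of $\mu$. Since representability is preserved under base change, this projection is representable.

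Next, as stated in the opening of Section A, $\mathfrak{M}(G)^{\mathrm{st}}$ is a (classical) Deligne--Mumford stack, in fact a $Z_G$-gerbe over the coarse moduli space $\mathcal{M}(G)$. Combining this with the representability of the restricted $\mu$, the total space inherits the DM property: a stack admitting a representable morphism to a DM stack is itself DM, since one can pull back an \'etale atlas of $\mathfrak{M}(G)^{\mathrm{st}}$ to an \'etale atlas of the restriction by a (derived) scheme (or algebraic space). In the derived setting, this is the standard fact that the $(\infty,1)$-category of derived DM stacks is stable under representable morphisms and base change.

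There is no real obstacle here beyond unwinding definitions; the derived enhancement may be nontrivial (as illustrated by the example following Definition \ref{defn Gaiotto Lagrangian} and by the remark after Theorem \ref{thm GR Higgs bundles}), but this does not affect the DM property since representability is tested on the underlying classical functor of points, and the derived structure sheaf is coherent. Hence $\mathfrak{L}(M)|_{\mathfrak{M}(G)^{\mathrm{st}}}$ is a (possibly derived) Deligne--Mumford stack, as claimed.
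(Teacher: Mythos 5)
Your proof is correct, and it rests on the same key input as the paper's (the representability of $\mu$ from Lemma \ref{lem Lagrangian is representable}, preserved under base change), but the final step runs differently. The paper deduces the Deligne--Mumford property by showing that representability forces the induced map on automorphism groups $\mathrm{Aut}(x) \to \mathrm{Aut}(\mu(x)) = Z_G$ to be injective (otherwise pulling back along $B\,\mathrm{Im}(\mu_x) \to \mathfrak{M}(G)^{\mathrm{st}}$ would produce a non-representable fiber), and then invokes finiteness of $Z_G$; this is the inertia/finite-automorphism-group criterion for DM stacks in characteristic zero. You instead pull back an \'etale atlas of $\mathfrak{M}(G)^{\mathrm{st}}$ along the representable projection to get an \'etale atlas of $\mathfrak{L}(M)|_{\mathfrak{M}(G)^{\mathrm{st}}}$ by an algebraic space, which is the textbook ``representable over DM is DM'' argument. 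Your route is arguably more robust, since it does not pass through the finite-automorphism-group characterization; the paper's route has the side benefit of making the finiteness of the automorphism groups (contained in $Z_G$) explicit, which is used implicitly when the stable locus is later rigidified as a $Z_G$-gerbe. One minor caveat: your parenthetical claims that ``representability is tested on the underlying classical functor of points'' and that ``the derived structure sheaf is coherent'' are loose and not needed for the argument; the clean statement is simply that the derived base change of a representable morphism along a derived affine remains a derived algebraic space, so the pulled-back atlas works verbatim in the derived setting.
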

\begin{proof}
    Since $\mu$ is representable, we must have that the induced morphism on automorphism groups for any object in $\mathfrak{L}(M)|_{\mathfrak{M}(G)^{\mathrm{st}}}$ is injective. Indeed, if there existed some $x \in \mathfrak{L}(M)|_{\mathfrak{M}(G)^{\mathrm{st}}}$ for which $\mu_x: \mathrm{Aut}_{\mathfrak{L}(M)|_{\mathfrak{M}(G)^{\mathrm{st}}}}(x) \to \mathrm{Aut}_{\mathfrak{M}(G)^{\mathrm{st}}}(x) = Z_G$ has kernel, then we may consider the subgroup $\mathrm{Im}(\mu_x) \subset Z_G$ and pullback along the morphism $B \, \mathrm{Im}(\mu_x) \to \mathfrak{M}(G)^{\mathrm{st}}$ at $\mu(x)$ to observe that the fiber is not representable. In particular, since $Z_G$ is a finite group, we see that $\mathfrak{L}(M)|_{\mathfrak{M}(G)^{\mathrm{st}}}$ is a (possibly derived) Deligne--Mumford stack. 
    
\end{proof}

From now on, we restrict to those Hamiltonian actions $M$ of the second kind in Section \ref{subsect two Hamiltonian actions}, i.e., $\mathfrak{L}(M)$ is the Cayley space associated to some magical $\mathfrak{sl}_2$-triple.

\begin{lem}
    Let $\mathfrak{L}(M)$ be one of Gaiotto's Lagrangians with $M$ of the second kind in Section \ref{subsect two Hamiltonian actions}. Then the restriction $\mathfrak{L}(M)|_{\mathfrak{M}(G)^{\mathrm{st}}}$ is a (classical) Deligne--Mumford stack. 
\end{lem}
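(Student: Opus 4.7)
The plan is to upgrade the preceding corollary (which gives that $\mathfrak{L}(M)|_{\mathfrak{M}(G)^{\mathrm{st}}}$ is a possibly derived DM stack) to classicality. Concretely, at any point $x$ of $\mathfrak{L}(M)|_{\mathfrak{M}(G)^{\mathrm{st}}}$, I will show that the tangent complex $\mathbf{T}_{\mathfrak{L}(M), x}$ is concentrated in degree $0$: the vanishing of positive-degree pieces gives classicality, while the vanishing of negative-degree pieces confirms the DM property.

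To access this tangent complex, I will apply Theorem \ref{thm main 2}, which states that the Cayley morphism $\mathrm{Lag}(\phi): \mathrm{Cay}_{G,\rho} \to \mathrm{Higgs}_{G_\rho^\RR}$ induces an equivalence on tangent complexes. Consequently the problem reduces to verifying that $\mathbf{T}_{\mathrm{Higgs}_{G_\rho^\RR}, (F, \phi)}$ is concentrated in degree $0$ at any point whose induced $G$-Higgs bundle $(E = \mathrm{Ind}_H^G F, \phi)$ is stable.

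The key observation is that the complexified Cartan decomposition $\g = \mathfrak{h} \oplus \mathfrak{m}$, together with the bracket relations $[\mathfrak{h}, \mathfrak{m}] \subset \mathfrak{m}$ and $[\mathfrak{m}, \mathfrak{m}] \subset \mathfrak{h}$, yields a direct sum splitting of the two-term complex $[\g_F \xrightarrow{[-, \phi]} \g_F \otimes K]$ computing the tangent complex of $\mathrm{Higgs}_G$ at $(E, \phi)$, since $\phi \in \mathfrak{m}_F \otimes K$ forces the adjoint differential to swap the $\mathfrak{h}$- and $\mathfrak{m}$-summands. Taking hypercohomology produces a splitting
$$
\mathbf{T}_{\mathrm{Higgs}_G, (E, \phi)} \simeq R\Gamma(\Sigma, [\mathfrak{h}_F \to \mathfrak{m}_F \otimes K]) \oplus R\Gamma(\Sigma, [\mathfrak{m}_F \to \mathfrak{h}_F \otimes K]),
$$
whose first summand is precisely $\mathbf{T}_{\mathrm{Higgs}_{G_\rho^\RR}, (F, \phi)}$ computed from the mapping stack description $\mathrm{Sect}(\Sigma, [\mathfrak{m}_K/H])$.

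Under the standing hypotheses ($g(\Sigma) \geq 2$ and $G$ semisimple), $\mathrm{Higgs}_G|_{\mathfrak{M}(G)^{\mathrm{st}}}$ is smooth and classical with tangent complex concentrated in degree $0$: the $\mathbf{H}^{-1}$-piece vanishes because stable automorphism groups are contained in the finite center $Z_G$, and $\mathbf{H}^1$ vanishes by Serre duality against $\mathbf{H}^{-1}$. As a direct summand of a complex concentrated in degree $0$, the tangent complex of $\mathrm{Higgs}_{G_\rho^\RR}$ at such points is likewise in degree $0$; transporting through the formally étale Cayley morphism delivers the conclusion for $\mathrm{Cay}_{G,\rho}$. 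The most delicate step to spell out carefully is the orthogonal splitting of the tangent complex, which ultimately amounts to a block-diagonal analysis of $[-, \phi]$ with respect to the $\sigma_\rho$-eigenspace decomposition.
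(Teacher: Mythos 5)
Your proof is correct, but it takes a genuinely different route from the paper's. The paper works directly on the Cayley space: it splits $\mathfrak{v}=\oplus_j V_{2m_j}$ into $\mathfrak{v}_0\oplus\mathbf{1}^{\oplus r}$ as a $C$-representation (using the classification table), kills the $H^1$ of the trivial summands by the genus bound (their $K^{1/2}$-weights are $\geq 4$, so they contribute sections of $K^{\geq 2}$), and identifies the $\mathfrak{v}_0$-summand of the tangent complex with the Dolbeault deformation complex of a stable $K^{m_c+1}$-twisted $\tilde{G}^\RR$-Higgs bundle, whose degree-one vanishing follows from stability and a twisted Serre duality argument. You instead transport the question through the formally \'etale Cayley morphism (Theorem \ref{thm main 2}, which is proved independently in Section \ref{subsect Cayley morphism}, so there is no circularity) and then run the standard Cartan-decomposition argument: since $\phi\in\mathfrak{m}_F\otimes K$ swaps $\mathfrak{h}_F$ and $\mathfrak{m}_F$, the deformation complex of the stable $G$-Higgs bundle splits as $[\mathfrak{h}_F\to\mathfrak{m}_F\otimes K]\oplus[\mathfrak{m}_F\to\mathfrak{h}_F\otimes K]$, and the $\mathbf{H}^0$ and $\mathbf{H}^2$ of each summand vanish because they inject into those of the whole complex, which vanish by stability, semisimplicity of $G$, and Serre self-duality. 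Both arguments are sound; yours is shorter and avoids the case analysis and the twisted Serre duality input, and as a byproduct it establishes that $\mathrm{Higgs}_{G^\RR_\rho}|_{\mathfrak{M}(G)^{\mathrm{st}}}$ itself is classical wherever the induced $G$-Higgs bundle is stable --- a statement the paper is deliberately more cautious about (cf.\ Lemma \ref{lem classical implies Lagrangian submanifold} and the remark following Theorem \ref{thm main 4}), so you should state and justify that intermediate claim explicitly rather than leave it implicit. The paper's computation buys finer information in exchange for more work: an explicit description of the tangent spaces of the Cayley space as $\tilde{G}^\RR$-Higgs deformations plus spaces of poly-differentials, which is what connects back to the parametrization in Theorem \ref{thm GCC}. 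One cosmetic point: your degree conventions drift (infinitesimal automorphisms sit in $\mathbf{T}^{-1}$, obstructions in $\mathbf{T}^{1}$, and the Serre pairing is between these two), so fix a convention and keep it throughout the splitting argument.
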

\begin{proof}
    
    To detect classicality of $\mathfrak{L}(M)|_{\mathfrak{M}(G)^{\mathrm{st}}}$ we compute its tangent complex and show that it is supported in nonpositive degrees. Let $y = (E,s) \in \mathfrak{L}(M)$ be a point, where $E$ is the underlying $G$-bundle and $s$ is a $K^{1/2}$-twisted section of the fiber bundle $M_E$. The tangent complex to $\mathfrak{L}(M)$ at $y$ is computed by the complex
    $$
        \mathbf{T}_y\mathfrak{L}(M) = R\Gamma(\Sigma, [\g_E \overset{\mathrm{act}_s}{\longrightarrow} T_sM_E] \times^{\Ggr}K^{1/2})[1]
    $$
    where we used the description of $\mathfrak{L}(M)$ as a mapping stack to pull back the tangent complex of $[M/G]$, which is concentrated in degrees $[-1,0]$. In principle, the hypercohomology spectral sequence tells us that $H^1(\Sigma, T_sM_E)$ may contribute to degree 1 of $\mathbf{T}_y\mathfrak{L}(M)$, but we will show that this does not occur for the Cayley spaces when we restrict to $\mathfrak{M}(G)^{\mathrm{st}}$. 

    Recall that in the notation introduced in Section \ref{subsect two Hamiltonian actions}, we have
    $$
        M = \left(f+ \oplus_j \, V_{2m_j}\right) \times^C G.
    $$
    Write $\mathfrak{v} := \oplus_j \, V_{2m_j}$, and we may view $s$ as a section of the associated $\mathfrak{v}$-bundle to the principal $C$-bundle $E$. We verify from Table \ref{tab:representations_C} that $\mathfrak{v}$, as a $C$-representation, is a direct sum of a single nontrivial irreducible representation along with copies of the trivial representation:
    \begin{equation} \label{eqn v as a C rep}
    \mathfrak{v} \simeq \mathfrak{v}_0 \oplus \mathbf{1}^{\oplus r}. 
    \end{equation} 
    
    Returning to the computation of the tangent complex $\mathbf{T}_y\mathfrak{L}(M)$ using the decomposition \eqref{eqn v as a C rep}, we see that we can split the tangent complex at $y$ into two summands
    $$
        \mathbf{T}_y \mathfrak{L}(M) \simeq R\Gamma(\Sigma, [\mathfrak{c}_E \overset{\mathrm{act}_s}{\to} \mathfrak{v}_{0,E}] \times^{\Ggr} K^{1/2}) [1] \oplus R\Gamma(\Sigma, (\mathbf{1}^{\oplus r}) \times^{\Ggr} K^{1/2}).
    $$
    Since the curve $\Sigma$ has genus at least 2 and the $K^{1/2}$-twists on the second summand are of weight $\geq 4$, the second summand is in fact a nonderived vector space in degree 0. For the first summand, we observe first that $s$ gives rise to a $K^{m_c+1}$-twisted Higgs field $\phi$ on $E$ valued in $\mathfrak{v}_0$. By \eqref{eq gtilde is c plus v0}, we may regard $(E,s)$ as a $\tilde{G}^\RR$-Higgs bundle, and the identification is compatible with tangent complexes:
    \begin{equation} \label{eq tangent complex and Dolbeault cohomology}
        R\Gamma(\Sigma, [\mathfrak{c}_E \overset{\mathrm{act}_s}{\to} \mathfrak{v}_{0,E}] \times^{\Ggr} K^{1/2}) [1] \simeq R\Gamma_{\mathrm{Dol}}(\Sigma, (E,s)).
    \end{equation}
    Stability of the point $y$ implies the stability of $(E,s)$ as a $\tilde{G}^\RR$-Higgs bundle. By a standard Serre duality argument (see Proposition 6.8 of \cite{Cayley}, for instance) along with the fact that $m_c > 0$, we may conclude that \eqref{eq tangent complex and Dolbeault cohomology} vanishes in degree 1. 
\end{proof}

We are now ready to pass to moduli spaces. Write
$$
    \mathcal{M}(G) := \mathfrak{M}(G)^{\mathrm{st}, \mathrm{rig}}
$$
for the moduli space of stable $G$-Higgs bundles obtained by passing to the stable locus and rigidifying; it is well-known that $\mathcal{M}(G)$ is a holomorphic symplectic manifold. The symplectic structures on $\mathfrak{M}(G)$ and $\mathcal{M}(G)$ are compatible in the sense that the rigidification morphism $\mathfrak{M}(G)^{\mathrm{st}} \to \mathcal{M}(G)$ is a $Z_G$-gerbe, so it induces an isomorphism on tangent complexes equipped with their respective symplectic pairings.

Base changing the morphism $\mu:\mathfrak{L}(M)|_{\mathfrak{M}(G)^{\mathrm{st}}} \to \mathfrak{M}(G)^{\mathrm{st}}$ along a splitting $\mathcal{M}(G) \to \mathfrak{M}(G)^{\mathrm{st}}$ of the $Z_G$-gerbe (which is possible upon picking an auxiliary base point on the curve), we obtain a morphism
$$
    \overline{\mu}:L(M) := \mathfrak{L}(M)|_{\mathfrak{M}(G)^{\mathrm{st}}} \times_{\mathfrak{M}(G)^{\mathrm{st}}} \mathcal{M}(G) \longrightarrow \mathcal{M}(G).
$$
Note that $L(M)$ is a priori a derived scheme, since $\mu$ is representable. By the previous Lemma, however, we can conclude the following.
\begin{cor} \label{cor classicality of Cayley space}
    $L(M)$ is a classical scheme and $\overline{\mu}$ is a quasi-finite morphism onto a (possibly singular) Lagrangian submanifold of $\mathcal{M}(G)$.
\end{cor}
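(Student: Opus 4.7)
The proof splits into three parts: establishing that $L(M)$ is a classical scheme, deducing quasi-finiteness of $\overline{\mu}$ from the derived Lagrangian structure on $\mu$, and verifying the Lagrangian submanifold property of the image. The first is essentially formal. By the preceding Lemma, $\mathfrak{L}(M)|_{\mathfrak{M}(G)^{\mathrm{st}}}$ is a classical Deligne--Mumford stack. Since the splitting $\mathcal{M}(G) \to \mathfrak{M}(G)^{\mathrm{st}}$ of the $Z_G$-gerbe is étale (the gerbe is étale-locally trivial and $Z_G$ is finite), base change preserves classicality. Representability by a scheme, rather than a DM stack, follows from Lemma \ref{lem Lagrangian is representable}: the base change of a representable morphism along the chosen splitting has trivial stabilizers, since the splitting kills all residual $Z_G$-automorphisms.

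The central step is establishing quasi-finiteness. Fix a stable point $y = (E, s) \in L(M)$ mapping to $x = \overline{\mu}(y) \in \mathcal{M}(G)$. Since $G$ is semisimple, $Z_G$ is finite, so $\mathbf{T}_x \mathcal{M}(G) = \mathbf{T}_{\mu(y)} \mathfrak{M}(G)^{\mathrm{st}}$ is concentrated in degree 0, and $\mathbf{T}_y \mathfrak{L}(M)$ lies in degrees $[-1,0]$ with trivial $H^{-1}$ (the Lie algebra of the finite stabilizer) and, as shown in the proof of the preceding Lemma, trivial $H^1$. The derived dual $\mathbf{L}_y \mathfrak{L}(M) \simeq (\mathbf{T}_y \mathfrak{L}(M))^\vee$ is therefore concentrated in degree 0 with $H^0(\mathbf{L}_y \mathfrak{L}(M)) \simeq H^0(\mathbf{T}_y \mathfrak{L}(M))^*$. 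The Lagrangian structure on $\mu$ provides a fiber sequence
\[
\mathbf{T}_y \mathfrak{L}(M) \longrightarrow \mathbf{T}_x \mathcal{M}(G) \longrightarrow \mathbf{L}_y \mathfrak{L}(M),
\]
whose associated long exact sequence of cohomology collapses to the short exact sequence
\[
0 \to T_y L(M) \xrightarrow{d\overline{\mu}} T_x \mathcal{M}(G) \to T_y L(M)^* \to 0.
\]
In particular, $d\overline{\mu}$ is injective at every stable point, so $\overline{\mu}$ is unramified; being of finite type, it is thus quasi-finite.

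For the final step, the short exact sequence yields $\dim T_y L(M) = \tfrac{1}{2} \dim \mathcal{M}(G)$, so $L(M)$ is smooth of half-dimension at stable points, and $\overline{\mu}$ is locally étale onto its image. The Lagrangian condition on $\mu$ forces the pullback of the symplectic form to $L(M)$ to vanish, so the image is isotropic of half the dimension of $\mathcal{M}(G)$, i.e., Lagrangian wherever smooth; singularities of the image can only appear at points where $\overline{\mu}$ fails to be injective. The key technical hurdle, under this plan, is the tangent-complex analysis in the second step: converting the derived Lagrangian condition into a clean short exact sequence of Zariski tangent spaces relies on using simultaneously the vanishings from the preceding Lemma, the finiteness of $Z_G$, and the duality $\mathbf{L}_y \mathfrak{L}(M) \simeq (\mathbf{T}_y \mathfrak{L}(M))^\vee$.
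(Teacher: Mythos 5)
Your argument is correct and is essentially the paper's own (the paper states this corollary without a written proof; the intended argument is exactly the one you give: classicality descends through the gerbe splitting by the preceding Lemma and Lemma~\ref{lem Lagrangian is representable}, and then the fiber sequence $\mathbf{T}_{\mathfrak{L}} \to \mu^*\mathbf{L}_{\mathfrak{M}} \to \mathbf{L}_{\mathfrak{L}}$ defining the Lagrangian structure collapses, once $\mathbf{T}_y$ sits in degree $0$, to a short exact sequence exhibiting $\mathrm{Im}(d\overline{\mu})$ as its own symplectic orthogonal).

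One point deserves a flag: from the short exact sequence you go on to conclude $\dim T_yL(M) = \tfrac12\dim\mathcal{M}(G)$ at \emph{every} stable point, i.e.\ that $L(M)$ is smooth of constant half-dimension and $\overline{\mu}$ is an immersion. The paper deliberately does not assert this --- immediately after the corollary it says it ``cannot guarantee that the tangent space of $L(M)$ has constant dimension'' and only claims the Lagrangian tangency on a dense open subset. Your stronger conclusion is valid \emph{if} the $H^1$-vanishing of the preceding Lemma really holds at every stable point (it hinges on the step ``stability of $y$ implies stability of $(E,s)$ as a $\tilde{G}^\RR$-Higgs bundle,'' which feeds the Serre-duality argument); if that vanishing is only generic, then at special points $H^{-1}(\mathbf{L}_y)$ may be nonzero, $d\overline{\mu}$ acquires a kernel of that dimension, and your derivation of unramifiedness (hence of quasi-finiteness via ``unramified $+$ finite type'') breaks at exactly those points. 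So either state explicitly that you are using the pointwise $H^1$-vanishing, or weaken the constancy claim to a dense open subset as the paper does.
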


While we cannot guarantee that the tangent space of $L(M)$ has constant dimension, we may conclude that $L(M)$ contains an open dense subset whose tangent spaces map onto a Lagrangian subspace of the tangent space of $\mathcal{M}(G)$. 

\subsection{Classicality of the Cayley morphism}

As a consequence of Theorem \ref{thm main 2} and the classicality of the Cayley space, we see that the moduli stack of $G^\RR_\rho$-Higgs bundles, for a magical $\rho$, is also essentially classical on the image of the Cayley morphism, as was stated in Theorem \ref{thm main 4}. To formulate this consequence properly, recall the Hamiltonian spaces $M_\rho$ and $M_\rho'$ for $\rho$ a magical $\mathfrak{sl}_2$ triple, in the notation of Section \ref{subsect two Hamiltonian actions}. Write $\mathcal{M}(G_\rho^\RR)$ for the moduli space of stable $G_\rho^\RR$-Higgs bundles.

\begin{cor}\label{cor classicality of Cayley morphism}
    Let $\mathcal{M}(G_\rho^\RR)^\circ \subset \mathcal{M}(G_\rho^\RR)$ be the image of $L(M_\rho')$ under the Cayley morphism. Then $\mathcal{M}(G^\RR_\rho)^\circ$ is a union of connected components, each of which maps quasi-finitely onto (possibly singular) Lagrangian submanifolds of $\mathcal{M}(G)$.
\end{cor}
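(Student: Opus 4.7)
The plan is to show that the Cayley morphism $\mathrm{Lag}(\phi): \mathrm{Cay}_{G,\rho} \to \mathrm{Higgs}_{G^\RR_\rho}$ is finite \'etale, and then transport this to the level of coarse moduli spaces. Theorem~\ref{thm main 2} shows that the relative tangent complex of $\mathrm{Lag}(\phi)$ vanishes, and since both stacks are locally of finite presentation as mapping stacks from $\Sigma$ into quotients by $G \times \Ggr$ of finite-type varieties, we deduce that $\mathrm{Lag}(\phi)$ is \'etale. Combined with Theorem~\ref{thm main 3}, which asserts universal closedness, $\mathrm{Lag}(\phi)$ is finite \'etale; in particular, its image in $\mathrm{Higgs}_{G^\RR_\rho}$ is an open and closed substack.

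Next, I would restrict via the global moment map $\mathrm{Higgs}_{G^\RR_\rho} \to \mathfrak{M}(G)$ to the open substack $\mathfrak{M}(G)^{\mathrm{st}}$ of stable $G$-Higgs bundles and rigidify by the center $Z_G$. Since a $G^\RR_\rho$-Higgs bundle whose underlying $G$-Higgs bundle is stable is itself stable as a $G^\RR_\rho$-Higgs bundle, this yields a finite \'etale morphism of classical schemes
\begin{equation*}
    L(M_\rho') \longrightarrow L(M_\rho) \subseteq \mathcal{M}(G^\RR_\rho),
\end{equation*}
whose image is open and closed in $L(M_\rho)$. To promote closedness to all of $\mathcal{M}(G^\RR_\rho)$, I would argue at the stack level: the image of $\mathrm{Lag}(\phi)$ intersected with the stable $G^\RR_\rho$-locus remains open and closed in $\mathrm{Higgs}^{\mathrm{st}}_{G^\RR_\rho}$, and passing to the coarse moduli (a proper surjective construction for DM stacks) preserves this property. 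That the image so obtained coincides with $\mathcal{M}(G^\RR_\rho)^\circ$ (the image of $L(M_\rho')$ specifically) follows from the classical fact, implicit in \cite{Cayley}, that Cayley-type $G^\RR_\rho$-Higgs bundles have stable underlying $G$-Higgs bundles.

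For the second assertion, Corollary~\ref{cor classicality of Cayley space} applied to $M_\rho'$ tells us that the composition $L(M_\rho') \to \mathcal{M}(G^\RR_\rho)^\circ \to \mathcal{M}(G)$ is quasi-finite onto a (possibly singular) Lagrangian submanifold. Since the Cayley morphism $L(M_\rho') \to \mathcal{M}(G^\RR_\rho)^\circ$ is finite and surjective onto each component, the induced map from each component of $\mathcal{M}(G^\RR_\rho)^\circ$ to $\mathcal{M}(G)$ is also quasi-finite onto the same Lagrangian image, with the Lagrangian property inherited through the equivalence of tangent complexes in Theorem~\ref{thm main 2}.

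The main obstacle I anticipate is the descent of the open-closed property from substacks of $\mathrm{Higgs}_{G^\RR_\rho}$ down to subsets of $\mathcal{M}(G^\RR_\rho)$, which requires reconciling the two relevant stability conditions: stability of the underlying $G$-Higgs bundle (governing $L(M_\rho)$) versus stability as a $G^\RR_\rho$-Higgs bundle (governing $\mathcal{M}(G^\RR_\rho)$). Handling this discrepancy can be done directly by analyzing Jordan--H\"older filtrations compatible with the $G^\RR_\rho$-reduction, or more expediently by appealing to the main theorem of \cite{Cayley} that $\Psi_\rho$ is open and closed.
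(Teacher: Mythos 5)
Your argument follows the paper's own route: Theorem \ref{thm main 2} gives the vanishing of the relative tangent complex, Theorem \ref{thm main 3} gives closedness, and Corollary \ref{cor classicality of Cayley space} supplies the quasi-finite Lagrangian image, so in substance the proof is the same and the conclusion is reached correctly. One claim, however, should be retracted: $\mathrm{Lag}(\phi)$ is \emph{not} finite \'etale, because a finite morphism is affine and in particular separated, whereas the paper observes (immediately after Theorem \ref{thm main 3}) that the Cayley morphism fails to be separated in general, precisely because $C$ is a proper subgroup of $H$; universal closedness plus \'etaleness does not repair this. Fortunately nothing in your argument actually uses finiteness: the open-and-closed image needs only ``\'etale (hence open) $+$ universally closed,'' and the transfer of quasi-finiteness from $L(M_\rho')$ to the components of $\mathcal{M}(G_\rho^\RR)^\circ$ needs only surjectivity onto those components. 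Your explicit discussion of the two stability conditions ($G$-stability of the underlying Higgs bundle versus $G_\rho^\RR$-stability) is a point the paper passes over silently, and is worth retaining, though resolving it by citing the openness and closedness of $\Psi_\rho$ from \cite{Cayley} is close to circular given that the corollary is meant to recover that statement; the direct comparison of stability conditions is the preferable route.
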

\begin{proof}
    The classicality of $\mathfrak{L}(M'_\rho)|_{\mathfrak{M}(G)^{\mathrm{st}}}$, the representability of $\mathfrak{L}(M_\rho) \to \mathfrak{M}(G)$, and Theorem \ref{thm main 2}, together imply that the schematic Cayley morphism $L(M_\rho') = \mathrm{Cay}_\rho \to \mathcal{M}_{G_\rho^\RR}$ is a morphism of schemes inducing isomorphisms on tangent spaces. Together with Theorem \ref{thm main 3} we see that $L(M_\rho') \to \mathcal{M}(G_\rho^\RR)$ is an isomorphism onto connected components which we called $\mathcal{M}(G_\rho^\RR)^\circ$. By Corollary \ref{cor classicality of Cayley space}, we deduce that $\mathcal{M}(G_\rho^\RR)^\circ$ maps quasi-finitely onto Lagrangian submanifolds of $\mathcal{M}(G)$. 
\end{proof}

It is interesting to note that if we consider $\mathfrak{L}(M_\rho)$ itself, i.e., the moduli stack of $G_\rho^\RR$-Higgs bundles for a magical triple $\rho$, it is not \textit{a priori} clear that $\mathfrak{L}(M_\rho)$ does not have non-classical components. One can observe immediately, as a consequence of the Lagrangianity of $\mathfrak{L}(M_\rho) \to \mathfrak{M}(G)$ that the following criterion holds. 
\begin{lem} \label{lem classical implies Lagrangian submanifold}
    Let $M$ be the Hamiltonian $G$-action associated to a real form $G^\RR$ as in Section \ref{subsect two Hamiltonian actions}. of The Lagrangian $\mathfrak{L}(M)|_{\mathfrak{M}(G)^{\mathrm{st}}} \to \mathfrak{M}(G)^{\mathrm{st}}$ is classical if and only if $\mathcal{M}(G^\RR)$ is quasi-finite over (possibly singular) Lagrangian submanifolds of $\mathcal{M}(G)$.
\end{lem}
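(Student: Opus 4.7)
The proof plan is to leverage the Lagrangian equivalence $\mathbf{T}_{\mathfrak{L}(M)} \simeq \mathbf{L}_\mu[-1]$ of \cite{Ginzburg-Rozenblyum}, which renders $\mathfrak{L}(M)$ quasi-smooth over $\mathfrak{M}(G)$ with virtual dimension $\tfrac{1}{2}\dim\mathcal{M}(G)$ once restricted to the stable rigidified locus. Following the setup of Corollary \ref{cor classicality of Cayley space}, pulling back along the splitting $\mathcal{M}(G) \to \mathfrak{M}(G)^{\mathrm{st}}$ of the $Z_G$-gerbe produces a derived scheme $L(M) := \mathfrak{L}(M)|_{\mathfrak{M}(G)^{\mathrm{st}}}\times_{\mathfrak{M}(G)^{\mathrm{st}}}\mathcal{M}(G)$, whose classical truncation $L(M)_{\mathrm{cl}}$ admits a finite map onto $\mathcal{M}(G^\RR)$ with fibers controlled by $Z_G$, by the moduli-theoretic identification of Theorem \ref{thm GR Higgs bundles}. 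Critically, $\mathfrak{L}(M)|_{\mathfrak{M}(G)^{\mathrm{st}}}$ is classical if and only if $L(M)$ is (since the gerbe structure is smooth), and classical dimensions satisfy $\dim L(M)_{\mathrm{cl}} = \dim\mathcal{M}(G^\RR)$.

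For the $(\Leftarrow)$ direction, if $\mathcal{M}(G^\RR)$ is quasi-finite over a Lagrangian submanifold $\Lambda\subset\mathcal{M}(G)$, then $\dim\mathcal{M}(G^\RR) = \dim\Lambda = \tfrac{1}{2}\dim\mathcal{M}(G)$, matching the virtual dimension of $L(M)$. For a quasi-smooth derived scheme, coincidence of actual and virtual dimensions forces the obstruction sheaf $\mathcal{H}^{-1}(\mathbf{L}_{L(M)})$ (which is locally free of rank $\dim_{\mathrm{cl}}-\mathrm{vdim}$ generically) to vanish, so the derived structure is trivial. For the $(\Rightarrow)$ direction, classicality gives $L(M)$ as a classical scheme of dimension exactly $\tfrac{1}{2}\dim\mathcal{M}(G)$. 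The Lagrangian condition then forces the tangent map $T_yL(M) \to T_{\overline{\mu}(y)}\mathcal{M}(G)$ to have isotropic image, which the dimension count upgrades to a Lagrangian, and hence injective, map. Thus $\overline{\mu}$ is unramified with image a (possibly singular) Lagrangian subvariety $\Lambda \subset \mathcal{M}(G)$, the induced map $L(M) \to \Lambda$ is quasi-finite, and this property transfers to $\mathcal{M}(G^\RR)$ via the finite map $L(M)_{\mathrm{cl}} \to \mathcal{M}(G^\RR)$.

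The main obstacle I anticipate is the careful comparison between $L(M)_{\mathrm{cl}}$ and $\mathcal{M}(G^\RR)$: $L(M)$ rigidifies only by $Z_G$ coming from $\mathfrak{M}(G)$, whereas $\mathcal{M}(G^\RR)$ is the coarse space of stable $G^\RR$-Higgs bundles with a potentially different notion of stability. On the locus where both $G$- and $G^\RR$-stability hold, these rigidifications differ by a central subgroup and the map is a quotient by a finite group, making the dimension and quasi-finiteness comparisons immediate; at the boundary strata one must ensure that polystability phenomena do not destroy the quasi-finiteness conclusion, but these do not affect the logical equivalence claimed in the lemma.
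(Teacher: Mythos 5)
Your argument reaches the stated equivalence by a genuinely different mechanism from the paper's. The paper's proof is pointwise: from the Lagrangian fiber sequence $\mathbf{T}_{\mathfrak{L}} \to \mu^*\mathbf{T}_{\mathfrak{M}} \to \mathbf{L}_{\mathfrak{L}}$ one reads off, at each point $y$, an exact sequence
$0 \to H^1(\mathbf{T}_y)^* \to H^0(\mathbf{T}_y) \to T_{\mu(y)}\mathcal{M}(G) \to H^0(\mathbf{T}_y)^* \to H^1(\mathbf{T}_y) \to 0$,
so that vanishing of $H^1(\mathbf{T}_y)$ is equivalent to the tangent map being injective with Lagrangian image; the paper then identifies this vanishing with classicality. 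You instead argue globally, via quasi-smoothness, the virtual dimension $\tfrac12\dim\mathcal{M}(G)$ forced by the Lagrangian structure, and the regular-sequence criterion ``classical $\Leftrightarrow$ every component of the truncation has dimension equal to the virtual dimension.'' For the $(\Leftarrow)$ direction your route is arguably \emph{more} faithful to the statement than the paper's one-liner: a classical quasi-smooth scheme with singularities still has $H^1(\mathbf{T}_y)\neq 0$ at its singular points, so ``tangent complex concentrated in degree $0$'' is really smoothness, whereas your dimension criterion correctly detects classicality in the ``(possibly singular)'' regime the lemma explicitly allows. (Your parenthetical that $\mathcal{H}^{-1}(\mathbf{L}_{L(M)})$ is generically locally free of rank $\dim_{\mathrm{cl}}-\mathrm{vdim}$ is neither correct as stated nor needed; Koszul regularity suffices.)

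The one step that does not go through is in the $(\Rightarrow)$ direction: ``the dimension count upgrades [the isotropic image] to a Lagrangian, and hence injective, map, thus $\overline{\mu}$ is unramified.'' This only works where $\dim T_yL(M)=\tfrac12\dim\mathcal{M}(G)$, i.e., at smooth points of $L(M)$. At a singular point of the classical LCI scheme $L(M)$ the Zariski tangent space is strictly larger, its image in $T\mathcal{M}(G)$ is isotropic and hence of dimension at most $\tfrac12\dim\mathcal{M}(G)$, so $d\overline{\mu}_y$ has nontrivial kernel (indeed $\ker d\overline{\mu}_y\cong H^1(\mathbf{T}_y)^*\neq 0$ there) and $\overline{\mu}$ is genuinely ramified. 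Unramifiedness, and with it quasi-finiteness, is therefore only established on the smooth locus; one needs a separate argument (generic unramifiedness on each $\tfrac12\dim\mathcal{M}(G)$-dimensional component together with ruling out positive-dimensional fibers inside the singular locus, or the explicit spectral description) to conclude quasi-finiteness everywhere. To be fair, the paper shares this soft spot: Corollary \ref{cor classicality of Cayley space} explicitly concedes that the tangent spaces are only controlled on an open dense subset, so this is a gap inherited from the source rather than one your strategy introduces.
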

\begin{proof}
    We have seen that $\mathfrak{L}(M)|_{\mathfrak{M}(G)^{\mathrm{st}}}$ is a derived Deligne--Mumford stack with tangent complex concentrated in degrees $[0,1]$. The classicality of $\mathfrak{L}(M)|_{\mathfrak{M}(G)^{\mathrm{st}}}$ is equivalent to the condition that the tangent complex is concentrated in degree 0, which is in turn equivalent to the tangent spaces of $\mathcal{M}(G_\rho^\RR)$ being mapped to a Lagrangian subspace of the tangent spaces of $\mathcal{M}(G)$.
\end{proof}

\subsection{Table of $C$-representations}

By applying $\ad_f$ on each summand of the highest weight spaces $V_{2m_j}$ the appropriate number of times, we arrive at a $C$-equivariant isomorphism 

\begin{equation}
    \mathfrak{v} := \oplus_j V_{2m_j} \overset{\sim}{\longrightarrow} \mathfrak{g}_0 / \mathfrak{c}.
\end{equation} 

Recall the four families of Lie algebras $\mathfrak{g}$ introduced in Theorem~\ref{thm:classification-canonical-real-forms} which admit magical triples. We tabulate the $C$-representations that appear in $\mathfrak{v}$ in the following list, denoting by $\mathbf{1}$ the trivial 1-dimensional $C$-representation. Note that the $\mathbf{1}$-free part of $\mathfrak{v}$ is always an irreducible representation which we called $\mathfrak{v}_0$ in the proof of Corollary \ref{cor classicality of Cayley morphism}, in Equation \eqref{eqn v as a C rep}. 

\begin{center}
\begin{table}[h!]
\centering
\caption{Representations of the magical triple centralizer $C$}
\label{tab:representations_C}
\small 
\renewcommand{\arraystretch}{1.2} 
\begin{tabular}{l|l|c|c|c} 
\Xhline{3\arrayrulewidth} 
\textbf{Case} & \textbf{Type} & \textbf{$\mathfrak{g}_0$} & \textbf{$\mathfrak{c}$} & \textbf{$\mathfrak{v} \simeq \mathfrak{v}_0 \oplus \mathbf{1}^{\oplus r}$} \\
\Xhline{3\arrayrulewidth} 
(1) & rank $r$ & $\CC^r$ & $0$ & $\mathbf{1}^r$ \\
\Xhline{2\arrayrulewidth} 
(2) & $\text{A}_{2n-1}$ & $\mathfrak{sl}_n\CC \oplus \mathfrak{sl}_n\CC \oplus \CC$ & $\mathfrak{sl}_n\CC$ & $\mathrm{Ad}_C \oplus \mathbf{1}$ \\
\cline{2-5} 
    & $\text{B}_n$ & $\mathfrak{so}_{2n-1}\CC \oplus \CC$ & $\mathfrak{so}_{2n-2}\CC$ & $\mathrm{Std}_C \oplus \mathbf{1}$ \\
\cline{2-5}
    & $\text{C}_n$ & $\mathfrak{sl}_n\CC \oplus \CC$ & $\mathfrak{so}_n\CC$ & $V_{(n^2+n-2)/2} \oplus \mathbf{1}$ \\
\cline{2-5}
    & $\text{D}_n$ & $\mathfrak{so}_{2n-2}\CC \oplus \CC$ & $\mathfrak{so}_{2n-3}\CC$ & $\mathrm{Std}_C \oplus \mathbf{1}$ \\
\cline{2-5}
    & $\text{D}_{2n}$ & $\mathfrak{sl}_{2n}\CC \oplus \CC$ & $\mathfrak{sp}_{2n}\CC$ & $V_{2n^2-n-1} \oplus \mathbf{1}$ \\
\cline{2-5}
    & $\text{E}_7$ & $\mathfrak{e}_6 \oplus \CC$ & $\mathfrak{f}_4$ & $V_{26} \oplus \mathbf{1}$ \\
\Xhline{2\arrayrulewidth} 
(3) & $\text{B}, \text{C}$ & $\mathfrak{so}_{N-2p+2}\CC \oplus \CC^{p-1}$ & $\mathfrak{so}_{N-2p+1}\CC$ & $\mathrm{Std}_C \oplus \mathbf{1}^{p-1}$ \\
\Xhline{2\arrayrulewidth} 
(4) & $\text{E}_6$ & $\mathfrak{sl}_3\CC \oplus \mathfrak{sl}_3\CC \oplus \CC^2$ & $\mathfrak{sl}_3\CC$ & $\mathrm{Ad}_C \oplus \mathbf{1}^2$ \\
\cline{2-5}
    & $\text{E}_7$ & $\mathfrak{sl}_6\CC \oplus \CC^2$ & $\mathfrak{sp}_6\CC$ & $V_{14} \oplus \mathbf{1}^2$ \\
\cline{2-5}
    & $\text{E}_8$ & $\mathfrak{e}_6 \oplus \CC^2$ & $\mathfrak{f}_4$ & $V_{26} \oplus \mathbf{1}^2$ \\
\cline{2-5}
    & $\text{F}_4$ & $\mathfrak{sl}_3\CC \oplus \CC^2$ & $\mathfrak{so}_3\CC$ & $\mathrm{Std}_C \oplus \mathbf{1}^2$ \\
\Xhline{3\arrayrulewidth} 
\end{tabular}
\end{table}
\end{center}
The cases correspond to the cases in Theorem \ref{thm:classification-canonical-real-forms}: (1) split, (2) Hermitian of tube type, (3) special orthogonal, and (4) exceptional. We have used $V_k$ for an integer $k$ to denote the unique irreducible representation of dimension $k$ for each type, and the multiplicity $r$ of the trivial representation is exactly $r(\rho)$ in the statement of Theorem \ref{thm GCC}.

\bibliographystyle{my-amsalpha}
\bibliography{v9}

\newcommand{\etalchar}[1]{$^{#1}$}
\providecommand{\bysame}{\leavevmode\hbox to3em{\hrulefill}\thinspace}
\providecommand{\MR}{\relax\ifhmode\unskip\space\fi MR }
\providecommand{\MRhref}[2]{%
  \href{http://www.ams.org/mathscinet-getitem?mr=#1}{#2}
}
\providecommand{\href}[2]{#2}
\begin{thebibliography}{BCG{\etalchar{+}}24}

\bibitem[BCG{\etalchar{+}}24]{Cayley}
S.~Bradlow, B.~Collier, O.~{Garc{\'i}a-Prada}, P.~Gothen, and A.~Oliveira,
  \emph{A general {{Cayley}} correspondence and higher rank {{Teichm{\"u}ller}}
  spaces}, Annals of Mathematics \textbf{200} (2024), no.~3, 803--892.

\bibitem[BFN19]{BFN}
A.~Braverman, M.~Finkelberg, and H.~Nakajima, \emph{Ring objects in the
  equivariant derived {{Satake}} category arising from {{Coulomb}} branches},
  Advances in Theoretical and Mathematical Physics \textbf{23} (2019), no.~2,
  253--344.

\bibitem[BGG03]{BradlowEtAl2003}
S.~B. Bradlow, O.~{Garc{\'i}a-Prada}, and P.~B. Gothen, \emph{Surface group
  representations and {{U}}(p,q)-{{Higgs}} bundles}, Journal of Differential
  Geometry \textbf{64} (2003), no.~1, 111--170.

\bibitem[BGG07]{BradlowEtAl2007}
S.~B. Bradlow, O.~{Garc{\'i}a-Prada}, and P.~B. Gothen, \emph{Maximal surface
  group representations in isometry groups of classical {{Hermitian}} symmetric
  spaces}, Geometriae Dedicata \textbf{122} (2007), no.~1, 185--213.

\bibitem[BP03]{BalajiParameswaran2003}
V.~Balaji and A.~Parameswaran, \emph{Semistable principal bundles-{{II}}
  (positive characteristics)}, Transformation Groups \textbf{8} (2003), no.~1,
  3--36.

\bibitem[BS02]{BalajiSeshadri2002}
V.~Balaji and C.~Seshadri, \emph{Semistable principal bundles---{{I}}
  (characteristic zero)}, Journal of Algebra \textbf{258} (2002), no.~1,
  321--347.

\bibitem[BSV]{BZSV}
D.~{Ben-Zvi}, Y.~Sakellaridis, and A.~Venkatesh, \emph{Relative {{Langlands
  Duality}}}, Draft available.

\bibitem[CF]{CF}
E.~Y. Chen and E.~Franco, \emph{Relative {{BBB-branes}} on the {{Hitchin}}
  moduli stack}, In preparation.

\bibitem[Che]{C}
E.~Y. Chen, \emph{Relative {{Langlands}} duality of {{Hitchin}} systems}, In
  preparation.

\bibitem[CHS25]{Calaque-Haugseng-Scheimbauer}
D.~Calaque, R.~Haugseng, and C.~I. Scheimbauer, \emph{The {{AKSZ}} construction
  in derived algebraic geometry as an extended topological field theory},
  Memoirs of the {{American Mathematical Society}}, no. volume 308, number 1555
  (April 2025), American Mathematical Society, Providence, RI, 2025.

\bibitem[CM17]{CollingwoodMcGovern2017}
D.~H. Collingwood and W.~M. McGovern, \emph{Nilpotent {{Orbits}} in
  {{Semisimple Lie Algebras}}}, 1 ed., Routledge, October 2017.

\bibitem[CS79]{Colliot-TheleneSansuc1979}
J.~L. {Colliot-Th{\'e}l{\`e}ne} and J.~J. Sansuc, \emph{Fibr{\'e}s quadratiques
  et composantes connexes r{\'e}elles}, Mathematische Annalen \textbf{244}
  (1979), no.~2, 105--134.

\bibitem[CS21]{Collier-Sanders}
B.~Collier and A.~Sanders, \emph{({{G}},{{P}})-opers and global {{Slodowy}}
  slices}, Advances in Mathematics \textbf{377} (2021), Paper No. 107490, 43.

\bibitem[DM94]{DigneMichel1994}
F.~Digne and J.~Michel, \emph{{Groupes r{\'e}ductifs non connexes}}, Annales
  scientifiques de l'{\'E}cole normale sup{\'e}rieure \textbf{27} (1994),
  no.~3, 345--406.

\bibitem[DP12]{Donagi-Pantev}
R.~Donagi and T.~Pantev, \emph{Langlands duality for {{Hitchin}} systems},
  Inventiones Mathematicae \textbf{189} (2012), no.~3, 653--735.

\bibitem[DS95]{DrinfeldSimpson1995}
V.~G. Drinfeld and C.~Simpson, \emph{{$B$-structures on $G$-bundles and local
  triviality}}, Mathematical Research Letters \textbf{2} (1995), no.~6,
  823--829.

\bibitem[FG06]{FockGoncharov2006}
V.~Fock and A.~Goncharov, \emph{Moduli spaces of local systems and higher
  {{Teichm{\"u}ller}} theory}, Publications math{\'e}matiques de l'IH{\'E}S
  \textbf{103} (2006), no.~1, 1--211.

\bibitem[FH24]{Franco-Hanson}
E.~Franco and R.~Hanson, \emph{The {{Dirac-Higgs}} complex and categorification
  of ({{BBB}})-branes}, International Mathematics Research Notices. IMRN
  (2024), no.~19, 12919--12953.

\bibitem[FP23]{Franco-PN}
E.~Franco and A.~{Pe{\'o}n-Nieto}, \emph{Branes on the singular locus of the
  {{Hitchin}} system via {{Borel}} and other parabolic subgroups},
  Mathematische Nachrichten \textbf{296} (2023), no.~5, 1803--1841.

\bibitem[FW25]{Feng-Wang}
T.~Feng and J.~Wang, \emph{Geometric {{Langlands Duality}} for {{Periods}}},
  Geometric and Functional Analysis \textbf{35} (2025), no.~2, 463--541.

\bibitem[Gai18]{Gaiotto}
D.~Gaiotto, \emph{S-duality of boundary conditions and the geometric
  {{Langlands}} program}, Proc. Symp. Pure Math. \textbf{98} (2018), 139--180.

\bibitem[GGiR12]{Garcia-PradaEtAl2012}
O.~{Garcia-Prada}, P.~B. Gothen, and I.~M. i~Riera, \emph{The
  {{Hitchin-Kobayashi}} correspondence, {{Higgs}} pairs and surface group
  representations}, August 2012, arXiv:0909.4487 [math].

\bibitem[GLW21]{GuichardEtAl2021}
O.~Guichard, F.~Labourie, and A.~Wienhard, \emph{Positivity and representations
  of surface groups}, August 2021, arXiv:2106.14584 [math].

\bibitem[GN10]{Gaitsgory-Nadler}
D.~Gaitsgory and D.~Nadler, \emph{Spherical {{Varieties}} and {{Langlands
  Duality}}}, Moscow Mathematical Journal \textbf{10} (2010), no.~1, 65--137.

\bibitem[GP23]{GPPN}
O.~{Garc{\'i}a-Prada} and A.~{Pe{\'o}n-Nieto}, \emph{Abelianization of
  {{Higgs}} bundles for quasi-split real groups}, Transformation Groups
  \textbf{28} (2023), no.~1, 285--325.

\bibitem[GR18]{Ginzburg-Rozenblyum}
V.~Ginzburg and N.~Rozenblyum, \emph{Gaiotto's {{Lagrangian}} subvarieties via
  derived symplectic geometry}, Algebra and Representation Theory \textbf{21}
  (2018), 1003--1015.

\bibitem[GW09]{Gaiotto-Witten}
D.~Gaiotto and E.~Witten, \emph{S-duality of boundary conditions in {{N}}=4
  super {{Yang-Mills}} theory}, Advances in Theoretical and Mathematical
  Physics \textbf{13} (2009), no.~3, 721--896.

\bibitem[GW24]{GuichardWienhard2024}
O.~Guichard and A.~Wienhard, \emph{Generalizing {{Lusztig}}'s total
  positivity}, April 2024, arXiv:2208.10114 [math].

\bibitem[Hit87]{Hitchin1987}
N.~J. Hitchin, \emph{The {{Self-Duality Equations}} on a {{Riemann Surface}}},
  Proceedings of the London Mathematical Society \textbf{s3-55} (1987), no.~1,
  59--126.

\bibitem[Hit92]{Hitchin1992}
N.~Hitchin, \emph{Lie groups and teichm{\"u}ller space}, Topology \textbf{31}
  (1992), no.~3, 449--473.

\bibitem[Hit06]{Hitchin2006}
N.~Hitchin, \emph{Langlands duality and {{G2}} spectral curves}, November 2006,
  arXiv:math/0611524.

\bibitem[Hit13]{Hitchin2013}
N.~Hitchin, \emph{Higgs bundles and characteristic classes}, August 2013,
  arXiv:1308.4603 [math].

\bibitem[HLM24]{HameisterEtAl2024}
T.~Hameister, Z.~Luo, and B.~Morrissey, \emph{Relative {{Dolbeault Geometric
  Langlands}} via the {{Regular Quotient}}}, September 2024, arXiv:2409.15691
  [math].

\bibitem[HM24]{HameisterMorrissey2024}
T.~Hameister and B.~Morrissey, \emph{The {{Hitchin Fibration}} for {{Symmetric
  Pairs}}}, December 2024, arXiv:2407.00961 [math].

\bibitem[HT03]{Hausel-Thaddeus}
T.~Hausel and M.~Thaddeus, \emph{Mirror symmetry, {{Langlands}} duality, and
  the {{Hitchin}} system}, Inventiones Mathematicae \textbf{153} (2003), no.~1,
  197--229.

\bibitem[JF93]{Jacquet-Friedberg}
H.~Jacquet and S.~Friedberg, \emph{Linear periods.}, Journal f{\"u}r die reine
  und angewandte Mathematik \textbf{443} (1993), 91--140.

\bibitem[JS90]{Jacquet-Shalika}
H.~Jacquet and J.~Shalika, \emph{Exterior square {{L-functions}}}, Automorphic
  forms, {{Shimura}} varieties, and {{L-functions}}, {{Vol}}. {{II}}, Perspect.
  {{Math}}., vol.~11, Academic Press, Boston, MA, 1990, pp.~143--226.

\bibitem[KS17]{Knop-Schalke}
F.~Knop and B.~Schalke, \emph{The dual group of a spherical variety},
  Transactions of the Moscow Mathematical Society \textbf{78} (2017), 187--216.

\bibitem[KW07]{Kapustin-Witten}
A.~Kapustin and E.~Witten, \emph{Electric-magnetic duality and the geometric
  {{Langlands}} program}, Communications in Number Theory and Physics
  \textbf{1} (2007), no.~1, 1--236.

\bibitem[Lab06]{Labourie2006}
F.~Labourie, \emph{Anosov flows, surface groups and curves in projective
  space}, Inventiones mathematicae \textbf{165} (2006), no.~1, 51--114.

\bibitem[Muk81]{Mukai}
S.~Mukai, \emph{Duality between {{D}}({{X}}) and {{D}}({{X}}{\textasciicircum})
  with its application to {{Picard}} sheaves}, Nagoya Mathematical Journal
  \textbf{81} (1981), 153--175.

\bibitem[Nad05]{Nadler}
D.~Nadler, \emph{Perverse sheaves on real loop {{Grassmannians}}}, Inventiones
  mathematicae \textbf{159} (2005), no.~1, 1--73.

\bibitem[Nak24]{Nakajima}
H.~Nakajima, \emph{S-dual of {{Hamiltonian G}} spaces and relative
  {{Langlands}} duality}, September 2024, arXiv:2409.06303 [math].

\bibitem[OR23]{Oblomkov-Rozansky}
A.~Oblomkov and L.~Rozansky, \emph{{{3D TQFT}} and {{HOMFLYPT}} homology},
  Letters in Mathematical Physics \textbf{113} (2023), no.~3, Paper No. 71, 62.

\bibitem[PTVV13]{PTVV}
T.~Pantev, B.~To{\"e}n, M.~Vaqui{\'e}, and G.~Vezzosi, \emph{Shifted symplectic
  structures}, Publications math{\'e}matiques de l'IH{\'E}S \textbf{117}
  (2013), no.~1, 271--328.

\bibitem[Saf16]{Safranov}
P.~Safronov, \emph{Quasi-{{Hamiltonian}} reduction via classical
  {{Chern}}--{{Simons}} theory}, Advances in Mathematics \textbf{287} (2016),
  733--773.

\bibitem[Sch08]{Schmitt2008}
A.~H.~W. Schmitt, \emph{Geometric {{Invariant Theory}} and {{Decorated
  Principal Bundles}}}, 1 ed., Zurich {{Lectures}} in {{Advanced Mathematics}},
  vol.~9, EMS Press, July 2008.

\bibitem[Sch13]{Schaposnik2013}
L.~P. Schaposnik, \emph{Spectral data for {{G-Higgs}} bundles}, January 2013,
  arXiv:1301.1981 [math].

\bibitem[Sch14]{Schaposnik2014}
L.~P. Schaposnik, \emph{Spectral data for {{U}}(m,m)-{{Higgs}} bundles},
  International Mathematics Research Notices (2014), rnu029.

\bibitem[Sch22]{Schnell}
C.~Schnell, \emph{The {{Fourier-Mukai}} transform made easy}, Pure and Applied
  Mathematics Quarterly \textbf{18} (2022), no.~4, 1749--1770.

\bibitem[Sim88]{Simpson1988}
C.~T. Simpson, \emph{Constructing variations of {{Hodge}} structure using
  {{Yang-Mills}} theory and applications to uniformization}, Journal of the
  American Mathematical Society \textbf{1} (1988), no.~4, 867--918.

\bibitem[Sim99]{Simpson1999}
C.~Simpson, \emph{Algebraic aspects of higher nonabelian {{Hodge}} theory},
  March 1999, arXiv:math/9902067.

\bibitem[{Sta}25]{stacks-project}
{Stacks project authors}, \emph{The {{Stacks}} project}, 2025.

\bibitem[Ste68]{Steinberg1968}
R.~Steinberg, \emph{Endomorphisms of linear algebraic groups}, Memoirs of the
  American Mathematical Society \textbf{0} (1968), no.~80, 0--0.

\bibitem[SV17]{Sakellaridis-Venkatesh}
Y.~Sakellaridis and A.~Venkatesh, \emph{Periods and harmonic analysis on
  spherical varieties}, Ast{\'e}risque (2017), no.~396, viii+360.

\end{thebibliography}
\end{document}